\newcommand{\color}[6]{}
\theoremstyle{plain}
\newtheorem{lemma}{Lemma}[section]
\newtheorem{theorem}[lemma]{Theorem}
\newtheorem{proposition}[lemma]{Proposition}
\newtheorem{corollary}[lemma]{Corollary}
\newtheorem*{stat}{\name}
\newcommand{\name}{testing}
\theoremstyle{definition}
\newtheorem{definition}[lemma]{Definition}
\newtheorem{example}[lemma]{Example}
\theoremstyle{remark}
\newtheorem{remark}[lemma]{Remark}
\newcommand{\qedc}{{\qed}~{\rm Claim~{\theclaim}.}}
\newcommand{\qedsc}{{\qed}~{\rm Claim.}}
\numberwithin{equation}{section}
\newcommand{\pup}[1]{\textup{(}{#1}\textup{)}}
\newcommand{\set}[1]{\{#1\}}
\newcommand{\setm}[2]{\set{#1\mid#2}}
\newcommand{\Set}[1]{\left\{#1\right\}}
\newcommand{\Setm}[2]{\Set{#1\mid#2}}
\newcommand{\Pow}{\mathfrak{P}}
\newcommand{\Powf}[1]{[#1]^{<\omega}}
\newcommand{\dnw}{\mathbin{\downarrow}}
\newcommand{\upw}{\mathbin{\uparrow}}
\DeclareMathOperator{\supp}{supp}
\newcommand{\cC}{\mathcal{C}}
\newcommand{\sL}{\mathscr{L}}
\newcommand{\cS}{\mathcal{S}}
\newcommand{\cB}{\mathcal{B}}
\newcommand{\cV}{\mathcal{V}}
\newcommand{\cJ}{\mathcal{J}}
\newcommand{\cA}{\mathcal{A}}
\newcommand{\cD}{\mathcal{D}}
\newcommand{\cM}{\mathcal{M}}
\newcommand{\bx}{\boldsymbol{x}}
\newcommand{\bu}{\boldsymbol{u}}
\newcommand{\bv}{\boldsymbol{v}}
\DeclareMathOperator{\dom}{dom}
\DeclareMathOperator{\Con}{Con}
\DeclareMathOperator{\Conc}{Con_c}
\DeclareMathOperator{\Id}{Id}
\newcommand{\id}{\mathrm{id}}
\newcommand{\jz}{$(\vee,0)$}
\newcommand{\jzu}{$(\vee,0,1)$}
\newcommand{\jzs}{\jz-semi\-lat\-tice}
\newcommand{\jzus}{\jzu-semi\-lat\-tice}
\newcommand{\jzh}{\jz-ho\-mo\-mor\-phism}
\newcommand{\jzuh}{\jzu-ho\-mo\-mor\-phism}
\newcommand{\jzue}{\jzu-em\-bed\-ding}
\newcommand{\res}{\mathbin{\restriction}}
\newcommand{\module}[1]{|{#1}|}
\DeclareMathOperator{\card}{card}
\DeclareMathOperator{\Cond}{Cond}
\DeclareMathOperator{\Ob}{Ob}
\DeclareMathOperator{\Mor}{Mor}
\DeclareMathOperator{\M}{M}
\DeclareMathOperator{\crita}{crit}
\newcommand{\crit}[2]{\crita({{#1};{#2}})}
\newcommand{\critsym}[2]{\crita^{\mathrm{s}}({{#1};{#2}})}
\DeclareMathOperator{\Ide}{Id_e}
\DeclareMathOperator{\Ids}{Id_s}
\DeclareMathOperator{\cf}{cf}
\newcommand{\fU}{\mathfrak{U}}
\newcommand{\fF}{\mathfrak{F}}
\newcommand{\tosurj}{\mathbin{\twoheadrightarrow}}
\newcommand{\toinj}{\mathbin{\hookrightarrow}}
\begin{document}

\title[Critical points]{Critical points of pairs of varieties of algebras}

\author[P.~Gillibert]{Pierre Gillibert}
\address{LMNO, CNRS UMR 6139\\
D\'epartement de Math\'ematiques, BP 5186\\
Universit\'e de Caen, Campus 2\\
14032 Caen cedex\\
France}
\email{pierre.gillibert@math.unicaen.fr}

\keywords{Lattice; semilattice; poset; diagram; lifting; compact; congruence; homomorphism; condensate; L\"owenheim-Skolem theorem; critical point; norm-covering; supported; support; ideal}

\subjclass[2000]{08A30. Secondary 06B20, 08B25, 08B26}

\thanks{This paper is a part of the author's ``Doctorat de l'universit\'e de Caen'', prepared under the supervision of Friedrich Wehrung}

\date{\today}

\begin{abstract}
For a class~$\cV$ of algebras, denote by $\Conc\cV$ the class of all \jzs s isomorphic to the semilattice $\Conc A$ of all compact congruences of~$A$, for some~$A$ in~$\cV$. For classes~$\cV_1$ and~$\cV_2$ of algebras, we denote by $\crit{\cV_1}{\cV_2}$ the smallest cardinality of a \jzs\ in $\Conc\cV_1$ which is not in $\Conc\cV_2$ if it exists, $\infty$ otherwise. We prove a general theorem, with categorical flavor, that implies that for all finitely generated congruence-distributive varieties~$\cV_1$ and~$\cV_2$, $\crit{\cV_1}{\cV_2}$ is either finite, or $\aleph_n$ for some natural number $n$, or $\infty$. We also find two finitely generated modular lattice varieties~$\cV_1$ and~$\cV_2$ such that $\crit{\cV_1}{\cV_2}=\aleph_1$, thus answering a question by J.~T\r{u}ma and F.~Wehrung.
\end{abstract}

\maketitle

\section{Introduction}\label{S:Intro}
We denote by~$\Con A$ (resp., $\Conc A$) the lattice (resp., \jzs) of all congruences (resp., compact congruences) of an algebra~$A$. For a homomorphism~$f\colon A\to B$ of algebras, we denote by $\Con f$ the map from~$\Con A$ to~$\Con B$ defined by the rule
 \[
 (\Con f)(\alpha)=\text{congruence of }B\text{ generated by }\setm{(f(x),f(y))}{(x,y)\in \alpha},
 \]
for every $\alpha\in\Con A$. We also denote by~$\Conc f$ the restriction of~$\Con f$ from~$\Conc A$ to~$\Conc B$. This defines a functor $\Conc$ from the category of algebras of a fixed similarity type to the category of all \jzs s, moreover $\Conc$ preserves direct limits.

A \emph{lifting} of a \jzs\ $S$ is an algebra~$A$ such that $\Conc A\cong S$. Given a variety~$\cV$ of algebras, \emph{the compact congruence} class of~$\cV$, denoted by $\Conc\cV$, is the class of all \jzs s isomorphic to $\Conc A$ for some $A\in\cV$. As illustrated by~\cite{PTW}, even the compact congruence classes of small varieties are complicated objects.

Let~$\cV$ be a variety of algebras, let~$\cD$ be a diagram  of \jzs s and $(\vee,0)$-homomorphisms. A \emph{lifting} of~$\cD$ in~$\cV$ is a diagram $\cA$ of~$\cV$ such that the composite $\Conc\circ\cA$ is naturally isomorphic to~$\cD$.

To a poset~$I$ and a diagram $\vec S=(S_i,\varphi_i^j)_{i\leq j\text{ in }I}$ of \jzs s, we shall associate a \jzs~$C$, which is a subdirect product of the~$S_i$s.

We shall establish a set of results that can be loosely summed up as follows:
 \begin{quote}\em
 In the `good cases', if $C$ has a lifting in~$\cV$, then~$\vec S$ has a lifting in~$\cV$; and conversely. 
 \end{quote}
The \jzs~$C$ is not defined from~$\vec S$ alone, but from what we shall call a \emph{norm-covering} of~$I$ (Definition~\ref{D:normcovering}). By definition, a norm-covering of~$I$ is a pair $(U,\module{\cdot})$, where~$U$ is a so-called \emph{supported poset} (Definition~\ref{D:KerSupp}) and~$\module{\cdot}\colon U\to I$ is an isotone map. We shall write $C=\Cond(\vec S,U)$, and call~$C$ a \emph{condensate} of~$\vec S$ (cf. Section~\ref{S:Condensate}). The assignment $\vec S\mapsto\Cond(\vec S,U)$ can be naturally extended to a functor.

Among the above-mentioned `good' cases is the case where~$I$ is a \emph{well-founded  tree} (i.e., all principal lower subsets are well-founded chains). Hence we can associate liftings of \jzs s with liftings of diagrams of \jzs s indexed by trees (Corollary~\ref{C:liftinginitialstep}). By iterating this result finitely many times, we obtain similar results for diagrams indexed by finite products of trees (Corollary~\ref{C:lifting-with-nth-succesor-cardinal}). In particular (cf. Corollary~\ref{C:AllsemilatticesToAllDiagramProductOfTrees}), that if all \jzs s of a `good' class of \jzs s $\cS$ have a lifting in a variety~$\cV$, then every diagram of $\cS$, indexed by finite products of well-founded trees, has a lifting in~$\cV$. In particular, using the result, proved by W.\,A. Lampe in~\cite{Lamp82}, that \emph{every \jzus\ is isomorphic to~$\Conc G$ for some groupoid~$G$}, we prove in Corollary~\ref{C:generalizedLamp} that \emph{every diagram of \jzus s and \jzuh s, indexed by a finite poset, has a lifting in the variety of groupoids}. This extends to all finite poset-indexed diagrams the result, proved in \cite{Lamp06} for one zero-separating arrow, of simultaneous representation.

Funayama and Nakayama proved in \cite{FuNa42} that $\Conc L$ is distributive for any lattice~$L$. However, our result above cannot be extended to \jzus s replaced by \emph{distributive} \jzus s and groupoids replaced by \emph{lattices}. This is due to the negative solution to the Congruence Lattice Problem, obtained by F.~Wehrung in~\cite{CLP}, that gives a distributive \jzus\ that is not isomorphic to~$\Conc L$ for any lattice~$L$.

A somehow strange, but unavoidable, feature of our proof is that the condensate construction builds objects of larger cardinality. For example, in order to be able to lift diagrams indexed by (at most) countable chains of (at most) countable \jzs s, we need to be able to lift \jzs s of cardinality~$\aleph_1$.

Another interesting problem is the comparison of congruence classes of varieties of algebras.
Given two varieties~$\cV_1$ and~$\cV_2$ of algebras, the \emph{critical point} of~$\cV_1$ and~$\cV_2$, denoted by $\crit{\cV_1}{\cV_2}$, is the smallest cardinality of a \jzs\ in $\Conc(\cV_1)-\Conc(\cV_2)$ if it exists, or $\infty$, otherwise (i.e., if $\Conc\cV_1\subseteq\Conc\cV_2$). Denote by~$M_n$ the lattice of length two with~$n$ atoms and by $\cM^{0,1}_n$ the variety of bounded lattices generated by~$M_n$, for any positive integer~$n$.
M. Plo\v s\v cica gives in~\cite{Ploscica03} a characterization of \jzus s of cardinality $\aleph_1$ in $\Conc\cM_n^{0,1}$, and he proves that the result is independent of~$n$.
Moreover, M. Plo\v s\v cica also proves in~\cite{Ploscica00} that if we denote by $L$ the free lattice of $\cM_{n+1}^{0,1}$ with $\aleph_2$ generators, then $\Conc L$ has no lifting in $\cM_n^{0,1}$. (M. Plo\v s\v cica proves his results for varieties of bounded lattices, but for those negative results the difference between bounded and unbounded is inessential.) This implies that $\crit{\cM_m^{0,1}}{\cM_n^{0,1}}=\aleph_2$ for all integers $m>n\ge 3$.

One corollary of our main result is that if the critical point between two varieties~$\cV_1$ and~$\cV_2$ of algebras with countable similarity types is greater than $\aleph_n$, then all diagrams of countable \jzs s indexed by products of~$n$ finite chains which are liftable in~$\cV_1$ are also liftable in~$\cV_2$.

In Corollary~\ref{C:critpointifFGCDVisalephn} we prove that the critical point between a locally finite variety and a finitely generated congruence-distributive variety is either finite, or~$\aleph_n$  for some natural number $n$, or~$\infty$. Moreover in Section~\ref{S:critpointaleph1} we give two finitely generated varieties of modular lattices with critical point $\aleph_1$, which solves negatively Problem~5 in \cite{CLPSurv}. However, we still do not know whether there exists a pair of varieties of lattices with critical point $\aleph_n$ with $n\geq 3$.

\section{Basic concepts}\label{S:Basic}
We denote by $\dom f$ the domain of any function $f$. We write $\Pow(X)$ the set of all subsets of $X$ and $\Powf{X}$ the set of all finite subsets of $X$, for every set $X$. We denote by $\kappa^+$ the cardinal successor of $\kappa$ and $\kappa^{+n}$ the $n^{\mathrm{th}}$ successor of $\kappa$, and we denote $\cf\kappa$ the cofinality of $\kappa$, for every cardinal $\kappa$.

A \emph{poset} is a partially ordered set. We denote by $P^-$ (resp., $P^=$) the set of all non-minimal (resp., non-maximal) elements in a poset $P$. For $i, j\in P$ let $i\prec j$ hold, if $i<j$ and there is no $k\in P$ with $i<k<j$, in this case $i$ is called a \emph{lower cover} of $j$. If $j$ has exactly one lower cover, we denote it by $j_*$.
We put
\[Q\dnw X=\setm{p\in Q}{(\exists x\in X)(p\le x)},\qquad Q\upw X=\setm {p\in Q}{(\exists x\in X)(p\ge x)},\]
for any $X, Q\subseteq P$, and we will write $\dnw X$ (resp., $\upw X$) instead of $P\dnw X$ (resp., $P\upw X$) in case $P$ is understood. We shall also write $\dnw p$ instead of $\dnw\set{p}$, and so on, for $p\in P$. A poset $I$ is \emph{lower finite}, if $I\dnw i$ is finite for all $i\in I$. A subset $X$ of $P$ is a \emph{lower subset} if $P\dnw X=X$. An \emph{ideal} of $P$ is a nonempty, upward directed, lower subset of $P$. We denote by $\Id P$ the set of all ideals of $P$, partially ordered  
by inclusion. We will often identify $a$ with $P\dnw a$, where $a\in P$, and identify $P$ with $\setm{P\dnw a}{a\in P}\subseteq\Id P$. A \emph{tree} is a poset $T$ with a smallest element such that $T\dnw t$ is a chain for each $t\in T$. We denote by $\M(L)$ the set of all completely meet-irreducible elements in a lattice~$L$.

For an algebra~$A$ and $P\subseteq A^2$, denote by $\Theta_A(P)$ the smallest congruence of~$A$ that contains~$P$. We put $\Theta_A(x,y)=\Theta_A(\set{(x,y)})$, for all $x,y\in A$. Let $X\subseteq A$ we denote by $\Conc^X(A)=\setm{\Theta_A(P)}{P\in\Powf{X^2}}$ the set of all congruences of $A$ finitely generated by parameters in $X$.

Let $(A_i)_{i\in I}$ be a family of algebras of the same similarity type, let $(\theta_i)_{i\in I}\in(\Con A_i)^I$; the \emph{congruence product of $(\theta_i)_{i\in I}$} is the congruence defined by:
\[
\prod_{i\in I}\theta_i = \Setm{(x,y)\in \left(\prod_{i\in I}A_i\right)^2}{\forall i\in I,\ (x_i,y_i)\in\theta_i }.
\]

We denote by $x/\theta$ the equivalence class of $x$ modulo~$\theta$, where $\theta$ is an equivalence relation on a set~$A$ and $x\in A$. We shall often write $X/\theta=\setm{x/\theta}{x\in X}$, for any subset~$X$ of~$A$. The \emph{canonical embedding} from $X/(\theta\cap(X\times X))$ into~$A/\theta$ sends $x/(\theta\cap(X\times X))$ to~$x/\theta$, for each $x\in X$. We shall often identify~$X/\theta$ and $X/(\theta\cap(X\times X))$.

For a category $\cC$, we write $\Ob\cC$ the class of all objects of $\cC$ and $\Mor\cC$ the class of all morphisms in~$\cC$.

For categories $I$ and $J$, denote by $J^I$ the category whose objects are the functors from $I$ to $J$ and whose arrows are the natural transformations. Let $I, J$, and $\cS$ be categories, let $\cD\colon J\to\cS^I$ be a functor. We can define a functor:
\begin{align*}
\widehat{\cD}\colon I\times J & \to\cS\\
(i,j) &\mapsto\cD(j)(i) & & \text{for all $(i,j)\in\Ob(I\times J)$}\\
(f,g) &\mapsto\cD(g)_{i'}\circ\cD(j)(f) & &\text{for all $(f\colon i\rightarrow i',\ g\colon j\rightarrow j')\in\Mor(I\times J)$,}
\end{align*}
where $\cD(g)=(\cD(g)_k)_{k\in\Ob I}$.
Conversely, given a functor $\cD\colon I\times J\to\cS$, we can define a functor $\widetilde{\cD}\colon J\to\cS^I$ by:
\begin{align*}
\widetilde{\cD}(j)\colon I &\to\cS\\
i &\mapsto \cD(i,j), & &\text{for all $i\in\Ob I$}\\
f &\mapsto \cD(f,\id_j), & &\text{for all $f\in\Mor I$}
\end{align*}
which is a functor, for all $j\in\Ob J$, and
\[
\widetilde{\cD}(g) = (\cD(\id_i,g))_{i\in\Ob I} \colon \widetilde{\cD}(j) \to \widetilde{\cD}(k)
\]
which is a natural transformation, for all $(g\colon j\to k)\in\Mor J$.

We shall identify every poset $P$ with the category whose objects are the elements of $P$, and that has exactly one arrow, then denoted by $(i\le j)$, from $i$ to $j$, just in case $i\le j$ in $P$.

Let $\cS$ be a class of \jzs s, let~$\cV$ be a class of algebras of the same similarity type, let~$J$ be a category. A \emph{lifting in~$\cV$} of a functor $\cD\colon J\to\cS$ is a functor $\cA\colon J\to \cV$ such there exists a natural isomorphism $\Conc\circ\cA\to\cD$. In this case we say that $\cA$ is a lifting of~$\cD$ in~$\cV$.

Let~$J$ be a category. We put $i\unlhd j$, if there exists an arrow $f\colon i\to j$ of $J$, for all $i$ and $j$ in $\Ob J$. This relation is reflexive and transitive.

Let $I$ and $\cS$ be categories, let $\cD\colon I\to \cS$ be a functor. We denote by $\varinjlim\cD$ a colimit of $\cD$ if it exists. Strictly speaking, it is a cocone of $\cS$, however, we often identify it with its underlying object in~$\cS$.  Similarly, if all colimits indexed by $I$ exist, we consider $\varinjlim\colon\cS^I\to\cS$ as a functor. Colimits indexed by upwards directed posets are often called \emph{direct limits}.

It is well-known that any variety of algebras, viewed as a category, has all small colimits (small here means that the index category is small).

A variety of algebras is \emph{congruence-distributive} if each of its members has a distributive congruence lattice.

\section{A L\"owenheim-Skolem type property}\label{S:LSP}

\begin{definition}\label{D:LS}
Let $U$ be a poset, let~$J$ be a small category, and $\vec\kappa=(\kappa_u)_{u\in U}$ be a family of cardinals. A class~$\cV$ of algebras of the same similarity type is \emph{$(U,J,\vec\kappa)$-L\"owenheim-Skolem}, if for any functor $\cA\colon J\to\cV$ and for any family $(\alpha_u^j)_{u\in U}^{j\in\Ob J}$ of congruences, with $\alpha_u^j\in\Con\cA(j)$, such that $\sum_{j\in\Ob J}\card\Conc(\cA(j)/\alpha_u^j)<\kappa_u$ for all $u\in U$, there exists
a family $(B_u^j)_{u\in U}^{j\in\Ob J}$ of algebras such that:
\begin{enumerate}
\item The algebra $B_u^j$ is a subalgebra of $\cA(j)$ for all $u\in U$ and all $j\in \Ob J$.
\item The algebra $B_u^j/\alpha_u^j$ belongs to~$\cV$ for all $u\in U$ and all $j\in\Ob J$.
\item The containment $B_u^j\subseteq B_v^j$ holds for all $u\le v$ in $U$ and all $j\in\Ob J$.
\item The containment $\cA(f)(B_u^j)\subseteq B_u^k$ holds for every $u\in U$ and every morphism $f\colon j\to k$ in~$J$.
\item The morphism $\Con(q_u^j)$ is an isomorphism, where $q_u^j$ denotes the canonical embedding $B_u^j/\alpha_u^j\toinj \cA(j)/\alpha_u^j$.
\item The inequality $\sum_{j\in \Ob J}\card{B_u^j}<\kappa_{u}$ holds for all $u\in U$.
\end{enumerate}
\end{definition}

The following result appears in \cite[Theorem 10.4]{UnivAlgebra}.

\begin{lemma}\label{L:Condcompcongruences}
$\Theta_B(x,y)\leq\bigvee_{i<m}\Theta_B(x_i,y_i)$ if{f} there are a positive integer $n$, a list $\vec z$ of parameters from~$B$, and terms $t_1$, \dots, $t_n$ such that
\begin{align*}
x&=t_1(\vec x,\vec y,\vec z),\\
y&=t_n(\vec x,\vec y,\vec z),\\
t_j(\vec y,\vec x,\vec z)&=t_{j+1}(\vec x,\vec y,\vec z)\quad(\text{for all }j<n).
\end{align*}
\end{lemma}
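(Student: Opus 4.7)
The plan is to verify both implications of this standard Mal'cev-type characterization; the argument is essentially \cite[Theorem 10.4]{UnivAlgebra}, and what needs checking is its reformulation in the ``simultaneous swap'' form used here, where $\vec x$ and $\vec y$ are interchanged in their entirety at each step rather than one coordinate at a time.

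For the easy direction, write $\Theta:=\bigvee_{i<m}\Theta_B(x_i,y_i)$. Since $(x_i,y_i)\in\Theta$ for every $i$, the substitution property of $\Theta$ yields $(t(\vec x,\vec y,\vec z),t(\vec y,\vec x,\vec z))\in\Theta$ for any term $t$ and any parameters $\vec z$. The identities $t_j(\vec y,\vec x,\vec z)=t_{j+1}(\vec x,\vec y,\vec z)$ then turn the sequence $t_1(\vec x,\vec y,\vec z),\dots,t_n(\vec x,\vec y,\vec z)$ into a $\Theta$-chain from $x$ to $y$, so $(x,y)\in\Theta$.

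For the converse, let $\Phi$ be the relation on $B$ consisting of all pairs $(u,v)$ for which there exist $n$, a tuple $\vec z$ of parameters from $B$, and terms $t_1,\dots,t_n$ meeting the stated identities with $x,y$ replaced by $u,v$. I would verify that $\Phi$ is a congruence of $B$ containing each $(x_i,y_i)$, which forces $\Theta\le\Phi$ and thus completes the proof. Reflexivity is immediate (take $n=1$, with $t_1$ a projection); the pair $(x_i,y_i)$ is witnessed with $n=2$ by letting $t_1$ be the $i$th projection of $\vec x$ and $t_2$ the $i$th projection of $\vec y$. Compatibility with an $r$-ary basic operation $f$ reduces, via reflexivity and transitivity, to showing that $(u,v)\in\Phi$ implies $(f(u,\vec w),f(v,\vec w))\in\Phi$ for any parameters $\vec w$; for this one appends $\vec w$ to $\vec z$ and replaces each $t_j$ by $f(t_j,\vec w)$. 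Symmetry follows by reversing the list $t_1,\dots,t_n$ and swapping $\vec x$ with $\vec y$ in each term, with a minor adjustment at the endpoints.

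Transitivity is the only step with genuine content and is the main obstacle. Given witnesses $t_1,\dots,t_n$ for $(u,v)$ with parameter list $\vec z$ and $s_1,\dots,s_p$ for $(v,w)$ with parameter list $\vec{z}'$, merge the lists into $\vec{w}:=(\vec z,\vec{z}\,')$, view every term as depending trivially on the extra coordinates, and concatenate into $t_1,\dots,t_n,\tilde t_n,s_1,\dots,s_p$, where $\tilde t_n$ denotes $t_n$ with $\vec x$ and $\vec y$ interchanged. The junction identities hold because $t_n(\vec y,\vec x,\vec{w})=\tilde t_n(\vec x,\vec y,\vec{w})$ and $\tilde t_n(\vec y,\vec x,\vec{w})=t_n(\vec x,\vec y,\vec{w})=v=s_1(\vec x,\vec y,\vec{w})$. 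Inventing this bridging term, which absorbs the discrepancy between $t_n(\vec y,\vec x,\vec{w})$ and $s_1(\vec x,\vec y,\vec{w})$, is the single non-routine step in the whole argument.
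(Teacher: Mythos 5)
The paper offers no proof of this lemma; it simply cites \cite[Theorem~10.4]{UnivAlgebra} and moves on, so there is no argument of the paper's own to compare yours against. Your self-contained proof is correct. The forward direction works as stated: the substitution property of $\Theta:=\bigvee_{i<m}\Theta_B(x_i,y_i)$ gives $t_j(\vec x,\vec y,\vec z)\mathrel{\Theta}t_j(\vec y,\vec x,\vec z)$ for every $j$, and the linking identities thread these into a $\Theta$-chain from $x$ to $y$. For the converse, $\Phi$ is indeed a congruence containing each $(x_i,y_i)$, hence $\Theta\le\Phi$ and $(x,y)\in\Phi$, which is the conclusion. Your handling of compatibility (absorb the fixed arguments into $\vec z$ and compose each $t_j$ with $f$) and, crucially, of transitivity (insert the bridge term $\tilde t_n$, namely $t_n$ with $\vec x$ and $\vec y$ interchanged, and check the two junction identities) is exactly right. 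Two small remarks. First, for symmetry, reversing and swapping alone gives $s_1(\vec x,\vec y,\vec z)=t_n(\vec y,\vec x,\vec z)$, which need not equal $v$; the ``minor adjustment'' you allude to should be spelled out: prepend $t_n$ and append $t_1$ to the reversed-and-swapped list, producing a valid witness of length $n+2$. Second, you reuse $\vec w$ for both the merged parameter list and the third element of the transitivity chain; this is only a notational clash, but it is worth renaming one of them.
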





\begin{definition}
Let $\kappa$ be a cardinal. An algebra is \emph{locally $<\kappa$} if every finitely generated subalgebra is of cardinality $<\kappa$. The definition of \emph{locally $\le\kappa$} is similar. An algebra is \emph{locally finite} if it is locally $<\aleph_0$.

A variety of algebras is \emph{locally $<\kappa$} (resp., \emph{locally $\le\kappa$}) if all its members are locally $<\kappa$ (resp., \emph{locally $\le\kappa$}).
\end{definition}

\begin{remark}
Let $\sL$ be a similarity type. Every $\sL$-algebra is locally $\le\card\sL$. Let $\kappa$ be a cardinal, let $\sL\subseteq\sL'$ be similarity types, let $(E,\sL')$ be an algebra such that $(E,\sL)$ is locally $\le\kappa$, then $(E,\sL')$ is locally $\le\kappa+\card(\sL'-\sL)$.

Let $\kappa$ be a cardinal. If $E$ is a locally $\le\kappa$ algebra, then every subalgebra of $E$, generated by at most $\kappa$ elements, has at most $\kappa$ elements.
\end{remark}

The following lemma is proved using an argument similar to the one in the usual proof of the L\"owenheim-Skolem Theorem.

\begin{lemma}\label{L:quasiLS}
Let $\sL$ be a similarity type. Let $E$ be a $\sL$-algebra, let $Q\subseteq E$. Let $(\sL_i)_{i\in I}$ be a family of sub-similarity types of $\sL$. Let $\kappa$ be an infinite cardinal. If $(E,\sL)$ is locally $\le\kappa$, then there exists a subalgebra $(F,\sL)$ of $(E,\sL)$ such that:
\begin{enumerate}
\item The containment $Q\subseteq F$ is satisfied,
\item The inequality $\card F\le\kappa+\card Q+\card I$ holds,
\item The morphism $\Conc q_i\colon\Conc(F,\sL_i)\to\Conc(E,\sL_i)$ is one-to-one, where $q_i\colon (F,\sL_i)\to (E,\sL_i)$ denotes the inclusion map, for all $i\in I$.
\end{enumerate}
\end{lemma}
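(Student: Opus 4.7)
The plan is a standard L\"owenheim-Skolem-style closure of length~$\omega$, where the inductive step closes the current subalgebra under Malcev-type witnesses supplied by Lemma~\ref{L:Condcompcongruences}, one layer of witnesses per~$\sL_i$. Set $\lambda=\kappa+\card Q+\card I$, which is an infinite cardinal since $\kappa\ge\aleph_0$. I will build an increasing chain $F_0\subseteq F_1\subseteq\cdots$ of subalgebras of $(E,\sL)$, each of cardinality at most~$\lambda$, and take $F=\bigcup_{n<\omega}F_n$; clauses (1) and (2) will be immediate, and clause~(3) will be forced by the closure at each step.

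Take $F_0$ to be the subalgebra of $(E,\sL)$ generated by~$Q$; since $(E,\sL)$ is locally $\le\kappa$ and is generated here by $\le\lambda$ elements, the remark before the lemma gives $\card F_0\le\lambda$. Given $F_n$ with $\card F_n\le\lambda$, for every triple $(i,P,(x,y))$ with $i\in I$, $P\in\Powf{F_n^2}$, and $(x,y)\in F_n^2$ satisfying $(x,y)\in\Theta_{(E,\sL_i)}(P)$, Lemma~\ref{L:Condcompcongruences} will supply a finite list $\vec z_{i,P,x,y}\subseteq E$ of parameters together with $\sL_i$-terms realizing a Malcev chain in~$E$. Let $Z_n$ be the union of all such lists; there are at most $\card I\cdot\card F_n\cdot\card F_n\le\lambda$ triples, each contributing finitely many elements, so $\card Z_n\le\lambda$. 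I then let $F_{n+1}$ be the $(E,\sL)$-subalgebra generated by $F_n\cup Z_n$, which has cardinality $\le\lambda$ by the remark. The union $F=\bigcup_{n<\omega}F_n$ is a subalgebra of $(E,\sL)$ containing~$Q$ of cardinality at most~$\lambda$.

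To verify~(3), I fix $i\in I$ and the inclusion $q_i\colon(F,\sL_i)\toinj(E,\sL_i)$. Suppose that $\Conc q_i$ identifies the compact congruences $\Theta_{(F,\sL_i)}(P)$ and $\Theta_{(F,\sL_i)}(P')$ for some finite $P,P'\subseteq F^2$; equivalently, $\Theta_{(E,\sL_i)}(P)=\Theta_{(E,\sL_i)}(P')$. Choose $n$ large enough that $P\cup P'\subseteq F_n^2$. For each $(x,y)\in P$, the construction placed into $Z_n$ parameters $\vec z\subseteq E$ and $\sL_i$-terms witnessing $(x,y)\in\Theta_{(E,\sL_i)}(P')$ via Lemma~\ref{L:Condcompcongruences}; these parameters lie in $F_{n+1}\subseteq F$, and since $F$ is an $\sL$-subalgebra of~$E$, hence also an $\sL_i$-subalgebra, the very same sequence of term-equations holds inside $(F,\sL_i)$. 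Lemma~\ref{L:Condcompcongruences} then yields $(x,y)\in\Theta_{(F,\sL_i)}(P')$, so $\Theta_{(F,\sL_i)}(P)\le\Theta_{(F,\sL_i)}(P')$; by symmetry, equality holds, proving injectivity of $\Conc q_i$. The only real difficulty is the bookkeeping: one must close simultaneously for all $i\in I$ without blowing up the cardinality, which forces the $\card I$ summand in~(2) and requires $\lambda$ to absorb products of the form $\card F_n\cdot\card I$ at every stage.
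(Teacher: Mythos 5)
Your proof is correct and takes essentially the same route as the paper's: both build an $\omega$-chain of subalgebras by repeatedly adjoining, for each $i\in I$, the parameters supplied by Lemma~\ref{L:Condcompcongruences}, and then take the union. The only differences are cosmetic (the paper phrases the inductive invariant as a two-sided equivalence between order relations in $E$ and in $A_v$, whereas you track the closure via the sets $Z_n$), and ``Malcev chain'' is a slight misnomer for the Gr\"atzer-style witness sequence of Lemma~\ref{L:Condcompcongruences}, but the argument is sound.
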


\begin{proof}
Let $A_0$ be the subalgebra of $(E,\sL)$ generated by $Q$. As $E$ is locally $\le\kappa$, we have $\card A_0\le\kappa+\card Q$. Let $n<\omega$. Assume that we have constructed subalgebras $A_0\subseteq \dots\subseteq A_n$ of $(E,\sL)$ of cardinality at most $\kappa+\card Q+\card I$, such that for all $0\le u<v\le n$, for all $i\in I$, for all $m\in\mathbb{N}$, for all $x$, $y$, $x_1,\dots,x_m$, $y_1,\dots,y_m$ in $A_u$ we have the following equivalence
\begin{equation*}
\Theta_{(E,\sL_i)}(x,y)\le\bigvee_{1\le k\le m}\Theta_{(E,\sL_i)}(x_k,y_k)\ \Longleftrightarrow\ \Theta_{(A_v,\sL_i)}(x,y)\le\bigvee_{1\le k\le m}\Theta_{(A_v,\sL_i)}(x_k,y_k).
\end{equation*}

Let $i\in I$. Let $x$, $y$, $x_1,\dots,x_m$, $y_1,\dots,y_m$ in~$X_n$, such that the inequality
\begin{equation}\label{E:LSeq1}
\Theta_{(E,\sL_i)}(x,y)\le\bigvee_{1\le k\le m}\Theta_{(E,\sL_i)}(x_k,y_k)\text{\quad is satisfied}.
\end{equation}
Lemma~\ref{L:Condcompcongruences} implies that there are a positive integer $r$, a list $\vec z$ of parameters from~$E$, and terms $t_1$, \dots, $t_r$ such that
\begin{align}\label{E:LSeq2}
x& = t_1(\vec x,\vec y,\vec z),\notag\\ 
y& = t_r(\vec x,\vec y,\vec z),\notag\\
t_j(\vec y,\vec x,\vec z)&= t_{j+1}(\vec x,\vec y,\vec z)\quad(\text{for all }j<r).
\end{align}
So we can construct $X\subseteq E$ such that $A_n\subseteq X$, $\card X\le \card A_n+\card I+\kappa$ and for all $i\in I$ and all $x$, $y$, $x_1,\dots,x_m$, $y_1,\dots,y_m$ in~$X_n$ that satisfy \eqref{E:LSeq1}, there are a positive integer $r$, a list $\vec z$ of parameters from~$X$ and terms $t_1$, \dots, $t_r$ that satisfy \eqref{E:LSeq2}. Let $A_{n+1}$ be the subalgebra of $(E,\sL)$ generated by $X$. As $(E,\sL)$ is locally $\le\kappa$, we have $\card A_{n+1}\le\card X+\kappa\le \card A_n+\card I+\kappa\le\kappa+\card Q+\card I$. Moreover, by construction, our induction hypothesis is satisfied.

So there exists a sequence $(A_n)_{n<\omega}$ of subalgebras $(E,\sL)$ of cardinality at most $\kappa+\card Q+\card I$ such that for all $0<u<v$, for all $i\in I$, for all $m\in\mathbb{N}$, for all $x$, $y$, $x_1,\dots,x_m$, $y_1,\dots,y_m$ in $A_u$ the following equivalence holds:
\begin{equation*}
\Theta_{(E,\sL_i)}(x,y)\le\bigvee_{1\le k\le m}\Theta_{(E,\sL_i)}(x_k,y_k)\ \Longleftrightarrow\ \Theta_{(A_v,\sL_i)}(x,y)\le\bigvee_{1\le k\le m}\Theta_{(A_v,\sL_i)}(x_k,y_k).
\end{equation*}
Put $F=\bigcup_{n<\omega}A_n$, we have $Q\subseteq A_0\subseteq F$ and $\card F\le\sum_{n<\omega}\card A_n\le\kappa+\card Q+\card I$. It is easy to check that for all $i\in I$ and for all $m\in\mathbb{N}$, for all $x$, $y$, $x_1,\dots,x_m$, $y_1,\dots,y_m$ in $F$ the following equivalence holds:
\begin{equation*}
\Theta_{(E,\sL_i)}(x,y)\le\bigvee_{1\le k\le m}\Theta_{(E,\sL_i)}(x_k,y_k)\ \Longleftrightarrow\ \Theta_{(F,\sL_i)}(x,y)\le\bigvee_{1\le k\le m}\Theta_{(F,\sL_i)}(x_k,y_k).
\end{equation*}
Thus the morphism $\Conc q_i\colon\Conc(F,\sL_i)\to\Conc(E,\sL_i)$ is one-to-one.
\end{proof}

The following lemma is a generalization of the L\"owenheim-Skolem theorem to diagrams of algebras.

\begin{lemma}\label{L:LS}
Let $\kappa$ be a cardinal. Let $\sL$ be a similarity type, let~$\cV$ be a variety of $\sL$-algebras locally $\le\kappa$, let~$J$ be a small category, let $\cA\colon J\to\cV$ be a functor, let~$\alpha_j$ be a congruence of $\cA(j)$, and let~$Q_j$ be a subset of $\cA(j)$ for all $j\in\Ob J$. Then there exists a family $(B_j)_{j\in \Ob J}$ of algebras such that:
\begin{enumerate}
\item The algebra $B_j$ is a subalgebra of $\cA(j)$ for all $j\in\Ob J$.
\item The containment $\cA(f)(B_j)\subseteq B_k$ holds for every arrow $f\colon j\to k$ of $J$.
\item The morphism $\Con(q_j)$ is an isomorphism, where $q_j$ denotes the canonical embedding $B_j/\alpha_j\toinj \cA(j)/\alpha_j$, for all $j\in\Ob J$.
\item The following inequality holds:
\[
\card B_j\le  \kappa + \card \Mor (J\res j)+\sum_{i\unlhd j}\Big(\card\Conc(\cA(i)/\alpha_i) + \card Q_i\Big),\quad\text{for all $j\in\Ob J$,}
\]
where $J\res j$ denotes the full subcategory of $J$ with $\setm{i\in\Ob J}{i\unlhd j}$ as class of objects.
\item The containment $Q_j\subseteq B_j$ holds for all $j\in\Ob J$.
\end{enumerate}
\end{lemma}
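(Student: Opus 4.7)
The proof is an iterative construction in the spirit of the proof of Lemma~\ref{L:quasiLS}, run simultaneously over all objects $j\in\Ob J$. Write $\lambda_j$ for the cardinal on the right-hand side of~(4); I plan to build an $\omega$-chain $B_j^{(0)}\subseteq B_j^{(1)}\subseteq\cdots$ of subalgebras of $\cA(j)$ of cardinality at most $\lambda_j$, and set $B_j=\bigcup_{n<\omega}B_j^{(n)}$. The cases in which $\lambda_j$ is infinite are the only subtle ones; otherwise the iteration trivially stabilizes.

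At the base stage, for each $j$, let $B_j^{(0)}$ be the subalgebra of $\cA(j)$ generated by $Q_j$ together with, for every compact congruence $c\in\Conc(\cA(j)/\alpha_j)$, a finite family of pairs of $\cA(j)$ whose images modulo $\alpha_j$ generate $c$. Since $\cV$ is locally $\le\kappa$, this yields $\card B_j^{(0)}\le\kappa+\card Q_j+\card\Conc(\cA(j)/\alpha_j)\le\lambda_j$. This choice secures that every compact congruence of $\cA(j)/\alpha_j$ lies in the image of the map $\Conc q_j$ induced by the inclusion $B_j/\alpha_j\toinj\cA(j)/\alpha_j$.

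The inductive step $n\to n+1$ has two tasks. First, to secure~(2), I adjoin $\cA(f)(B_i^{(n)})$ to $B_j^{(n+1)}$ for every arrow $f\colon i\to j$ of $J$ with $i\unlhd j$. Second, for each $j$ and each finite tuple $(x,y,x_1,y_1,\dots,x_m,y_m)$ from $B_j^{(n)}$ satisfying
\[
\Theta_{\cA(j)/\alpha_j}(x/\alpha_j,y/\alpha_j)\le\bigvee_{k=1}^{m}\Theta_{\cA(j)/\alpha_j}(x_k/\alpha_j,y_k/\alpha_j),
\]
I apply Lemma~\ref{L:Condcompcongruences} in the quotient $\cA(j)/\alpha_j$ to extract terms $t_1,\dots,t_r$ and parameters $\vec z$ in $\cA(j)/\alpha_j$ witnessing this inequality, then choose arbitrary preimages of $\vec z$ in $\cA(j)$ and throw them into $B_j^{(n+1)}$. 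Letting $B_j^{(n+1)}$ be the subalgebra generated by $B_j^{(n)}$ and these additions, the local $\le\kappa$ hypothesis together with the absorption $\lambda_j^{<\omega}=\lambda_j$ gives $\card B_j^{(n+1)}\le\lambda_j$.

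With $B_j=\bigcup_{n<\omega} B_j^{(n)}$, conditions (1), (2), (4), (5) hold by construction. For (3), the base stage yields that every compact congruence of $\cA(j)/\alpha_j$ is the image under $\Conc q_j$ of some compact congruence of $B_j/\alpha_j$, while the inductive step ensures that every Mal'cev-chain inequality holding in $\cA(j)/\alpha_j$ between elements of $B_j/\alpha_j$ is already witnessed inside $B_j/\alpha_j$, so $\Conc q_j$ is an order-embedding; hence $\Conc q_j$, and equivalently $\Con q_j$, is an isomorphism. The main obstacle is the compatibility of the two tasks of the inductive step with the global size bound: new witnesses added at object $j$ may produce new tuples at every object $k$ with $j\unlhd k$, but this is absorbed by the $\omega$-iteration together with the fact that $\lambda_k$ already contains a contribution from each $i\unlhd k$ in the formula of~(4).
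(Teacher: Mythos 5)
Your proof is correct and reaches the same conclusion as the paper, but it is organized quite differently. The paper does not run the closure iteration directly over the family $(\cA(j))_{j\in\Ob J}$; instead, for each finite $I\subseteq\Ob J$ it forms the disjoint union $T_I=\bigsqcup_{j\in I}\cA(j)$ and endows it with an enlarged similarity type $\sL_I$ in which every arrow of the full subcategory on $I$ becomes a unary operation and each $\sL$-operation on $\cA(j)$ becomes a $j$-tagged operation. This encodes closure under all the maps $\cA(f)$ as closure under operations, so the paper can invoke the single-algebra Lemma~\ref{L:quasiLS} (itself an $\omega$-iteration using Lemma~\ref{L:Condcompcongruences}) once per $T_I$, inducting over $\card I$ to keep the family $(T'_I)_I$ coherent, then set $B_j^I=\cA(j)\cap T'_I$ and pass to the directed union $B_j=\bigcup_I B_j^I$. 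You inline all of this: you iterate simultaneously over all objects, manually adjoining $\cA(f)(B_i^{(n)})$ at each stage to secure condition (2), and manually absorbing Mal'cev witnesses, via Lemma~\ref{L:Condcompcongruences} applied in each quotient $\cA(j)/\alpha_j$, to make $\Conc q_j$ order-reflecting. Both routes land on the same cardinal estimate because the target bound $\lambda_j$ already contains contributions from every $i\unlhd j$ and from $\card\Mor(J\res j)$, so an $\omega$-chain of back-and-forth closures stays below $\lambda_j$ whenever $\lambda_j$ is infinite. The paper's reduction is more modular (it reuses Lemma~\ref{L:quasiLS} as a black box and isolates the diagram combinatorics in the similarity-type trick), while your version is more elementary and self-contained, at the cost of carrying the Mal'cev bookkeeping and the functoriality bookkeeping together in one interleaved induction. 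One small point worth spelling out if you write this up in full: to conclude that $\Conc q_j$ (and hence $\Con q_j$) is an isomorphism, you should note explicitly that your base stage gives \emph{surjectivity} of $\Conc q_j$ (every compact congruence of $\cA(j)/\alpha_j$ is already generated by pairs from $B_j/\alpha_j$), and that a surjective, order-reflecting $(\vee,0)$-homomorphism between $(\vee,0)$-semilattices is an isomorphism; the passage from $\Conc q_j$ to $\Con q_j$ then follows because congruences are directed joins of compact ones.
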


\begin{proof} Let $(Q'_j)_{j\in\Ob J}$ be a family of sets such that:
\begin{enumerate}
\item The set $Q'_j$ is a subset of $\cA(j)$.
\item The equality $\Conc(\cA(j)/\alpha_j) = \Conc^{Q'_j/\alpha_j}(\cA(j)/\alpha_j)$ holds.
\item The inequality $\card Q'_j\le \aleph_0 + \card \Conc(\cA(j)/\alpha_j) + \card Q_j$ holds.
\item The containment $Q_j\subseteq Q'_j$ holds.
\end{enumerate}
for all $j\in\Ob J$

Fix a family $(x_j)_{j\in\Ob J}\in \prod_{j\in J}\cA(j)$. Let $I$ be a finite subset of $\Ob J$, we denote by $\overline{I}$ the full subcategory of $J$ with class of objects $I$. Put $T_I=\bigsqcup_{j\in I}\cA(j)$, where $\bigsqcup$ denotes the disjoint union. Put $\sL_I=\Mor\overline{I}\sqcup \bigcup_{j\in I}(\set{j}\times\sL)$. We shall extend~$\sL_I$ to a similarity type (i.e., assign an arity to each element of~$\sL_I$) and endow~$T_I$ with a structure of a $\sL_I$-algebra. For each $n$-ary operation symbol $\ell\in\sL$ and each $j\in\Ob J$, we say that $(j,\ell)$ is a $n$-ary operation symbol, and we put:
\[
(j,\ell)^{T_I}(a_1,a_2,\dots,a_n)=\begin{cases}
\ell^{\cA(j)}(a_1,a_2,\dots,a_n) &\text{if $a_1,a_2,\dots,a_n\in \cA(j)$,}\\
x_j &\text{Otherwise,}
\end{cases}
\]
for all $a_1,a_2,\dots,a_n\in T_I$. Every $f\in\Mor \overline{I}$ will be a unary operation symbol, and for $f\colon i\to j$ we put:
\[
f^{T_I}(a)=\begin{cases}
\cA(f)(a) &\text{for all $a\in \cA(i)$,}\\
x_j &\text{for all $a\in T_I-\cA(i)$.}
\end{cases}
\]
Put $\sL_I'=\bigcup_{j\in I}(\set{j}\times\sL)\subseteq\sL_I$. We first show that $(T_I,\sL_I')$ is locally $\le\kappa$. Let~$X$ be a finite subset of~$T_I$. Put~$X_j=\set{x_j}\cup(X\cap\cA(j))$ for all $j\in I$. Let $Y_j$ be the subalgebra of $(\cA(j),\sL)$ generated by $X_j$, for all $j\in I$. As $\cA(j)$ is locally $\le\kappa$ and~$X_j$ is finite, we get $\card Y_j\le\kappa$, for all $j\in I$. Put $Y=\bigsqcup_{j\in I}Y_j$, then $Y$ is a subalgebra of $(T_I,\sL_I')$ and $Y\supseteq X$. It follows that $(T_I,\sL_I')$ is locally $\le\kappa+\card I$. Moreover we have $\sL_I-\sL_I'=\Mor\overline{I}$, so $(T_I,\sL_I)$ is locally $\le\kappa+\card\Mor\overline{I}$.

Put $\sL_j=\set{j}\times\sL$, for all $j\in\Ob J$. The similarity type $\sL_j$ is a sub-similarity type of $\sL_I$, for all $I\in\Powf{\Ob J}-\set{\emptyset}$ and all $j\in I$. Applying Lemma~\ref{L:quasiLS}, arguing by induction on $\card I$, we construct a family $(T'_I,\sL_I)_{I\in\Powf{\Ob J}-\set{\emptyset}}$ of algebras such that:
\begin{enumerate}
\item the algebra $(T_I',\sL_I)$ is a subalgebra of $(T_I,\sL_I)$,
\item the morphism $\Conc q_j^{I}\colon \Conc (T_I',\sL_j)\to\Conc (T_I,\sL_j)$ is one-to-one, where $q_j^{I}\colon (T_I',\sL_j)\to (T_I,\sL_j)$ denotes the inclusion map, for all $j\in I$.
\item the containment $\bigsqcup_{i\in I}Q_i' \subseteq T'_I$ holds,
\item the containment $T'_K\subseteq T'_I$ holds,
\item the inequality $\card T'_I\le \kappa + \sum_{i\in I}\card Q_i'+\card\Mor\overline{I}$ holds,
\end{enumerate}
for all finite nonempty subsets $K\subseteq I$ of $\Ob J$.

Let $I$ be a finite nonempty subset of $\Ob J$, let $j\in I$. Put $B_j^I=\cA(j)\cap T'_I$. We consider:
\begin{align*}
q_j^I\colon (T_I',\sL_j)&\to (T_I,\sL_j),\\
p_j^I\colon (B_j^I,\sL_j)&\to (\cA(j),\sL_j),\\
s_j^I\colon (B_j^I,\sL_j)&\to (T_I',\sL_j),\\
t_j^I\colon (\cA(j),\sL_j)&\to (T_I,\sL_j),
\end{align*}
the inclusion maps. The map $\Conc q_j^I$ is one-to-one. The following diagram is commutative:
\[ 
\begin{CD}
(T_I',\sL_j)  @>q_j^I>>  (T_I,\sL_j)\\
@A{s_j^I}AA                    @AA{t_j^I}A\\
(B_j^I,\sL_j) @>>p_j^I>   (\cA(j),\sL_j)
\end{CD}
\]
Let $\theta$ be a congruence of $(B_j^I,\sL_j)$, it is easy to check that $\theta\cup\id_{T_I'}$ is a congruence of $(T_I',\sL_j)$. Thus $\Conc s_j^I$ is one-to-one. Hence $\Conc p_j^I$ is one-to-one. The following statements hold:

\begin{enumerate}
\item The morphism $\Conc p_j^I\colon (B_j^I,\sL)\to (\cA(j),\sL)$ is one-to-one, where we denote by $p_j^I\colon (B_j^I,\sL)\to (\cA(j),\sL)$ the inclusion map.
\item The containment $Q_j' \subseteq B_j^I$ holds,
\item The containment $B_j^K\subseteq B_j^I$ holds,
\item $\card B_j^I\le \kappa + \sum_{i\in I}\card Q_i' + \card\Mor\overline{I}$,
\item $\cA(f)(B_i^I)\subseteq B_j^I$ for all $f\colon i\to j$ in $\overline{I}$,
\end{enumerate}
for each finite nonempty subset $K\subseteq I$ of $\Ob J$ and each $j\in K$.

The subset $B_j= \bigcup_{I\in\Powf{\Ob (J\dnw j)}-\set{\emptyset}}B_j^I$ is a directed union of the algebras~$B_j^I$, for $I\in\Powf{\Ob (J\dnw j)}-\set{\emptyset}$. Moreover, the following statements hold for each $j\in\Ob J$:
\begin{itemize}
\item The map $\Conc p_j\colon\Conc(B_j/\alpha_j)\to\Conc(\cA(j)/\alpha_j)$ is one-to-one, where $p_j\colon B_j/\alpha_j\toinj\cA(j)/\alpha_j$ denotes the canonical embedding.
\item The containment $Q_j' \subseteq B_j$ holds. So $\Conc(\cA(j)/\alpha_j) = \Conc^{B_j/\alpha_j}(\cA(j)/\alpha_j)$, and so $\Conc q_j\colon\Conc (B_j/\alpha_j)\to\Conc(\cA(j)/\alpha_j)$ is an isomorphism.
\item The following inequalities hold:
\begin{align*}
\card B_j&\le \sum_{I\in\Powf{\Ob (J\res j)}-\set{\emptyset}}\left(  \kappa + \sum_{i\in I}\card Q_i' + \card\Mor\overline{I}  \right)\\
&\le \kappa + \sum_{i\unlhd j}\card Q_i'+ \sum_{I\in\Powf{\Ob (J\res j)}-\set{\emptyset}}\Big( \card \Mor\overline{I}\Big)\\
&\le \kappa + \sum_{i\unlhd j} \Big( \card Q_i+\Conc(\cA(i)/\alpha_i) \Big) +\card\Mor(J\res j)
\end{align*}
\item $\cA(f)(B_i)\subseteq B_j$ for all $f\colon i\to j$ in $J$.\qed
\end{itemize}
\renewcommand{\qed}{}
\end{proof}

\begin{lemma}\label{L:LS2}
Le $\lambda$ be an infinite cardinal. Let $\sL$ be a similarity type, let~$\cV$ be a locally~$\le\lambda$ variety of $\sL$-algebras, let $U$ be a poset, let~$J$ be a small category, and let $\vec\kappa=(\kappa_u)_{u\in U}$ be a family of cardinals such that
\begin{enumerate}
\item the inequality $\lambda+\card\Mor J<\kappa_u$ holds for all $u\in U$, 
\item for any family $(\kappa^j_u)_{u\in U}^{j\in \Ob J}$ of cardinals such that $\kappa_u^j<\kappa_u$ for all $u\in U$ and all $j\in\Ob J$, the inequality $\sum_{v\le u}\sum_{j\in\Ob J}\kappa^j_v<\kappa_u$ holds.
\end{enumerate}
Then~$\cV$ is $(U,J,\vec\kappa)$-L\"owenheim-Skolem.
\end{lemma}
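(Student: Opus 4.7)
The plan is to reduce the statement to a single application of Lemma~\ref{L:LS} by passing to the product index category $U\times J$. Regarding $U$ as a category via its poset structure, form the product category $U\times J$ and lift $\cA$ to the functor $\widehat{\cA}\colon U\times J\to\cV$ which is constant in the first coordinate: $\widehat{\cA}(u,j):=\cA(j)$ on objects and $\widehat{\cA}(u\le v,f\colon j\to k):=\cA(f)$ on arrows. I then apply Lemma~\ref{L:LS} to $\widehat{\cA}$, with congruences $\widehat{\alpha}_{(u,j)}:=\alpha_u^j$ and with $Q_{(u,j)}:=\emptyset$, and set $B_u^j$ to be the resulting subalgebra of $\cA(j)$.

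Conditions (1)--(5) of Definition~\ref{D:LS} then transfer directly from Lemma~\ref{L:LS}: (3) is the image containment in Lemma~\ref{L:LS}(2) applied to the arrow $(u\le v,\id_j)$; (4) is the same containment applied to $(\id_u,f)$; (5) is Lemma~\ref{L:LS}(3); and (2) holds because $\cV$ is closed under subalgebras and quotients. The substantive content is condition (6). The cardinality bound of Lemma~\ref{L:LS}(4) reads
\[
\card B_u^j\le\lambda+\card\Mor\bigl((U\times J)\res(u,j)\bigr)+\sum_{v\le u}\sum_{i\unlhd j}\card\Conc(\cA(i)/\alpha_v^i),
\]
and I verify that each term on the right is $<\kappa_u$ uniformly in $j$. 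Hypothesis~(2) applied to the constant family $\kappa_v^j:=1$ (legal because hypothesis~(1) forces every $\kappa_v$ to be infinite) yields $\card(U\dnw u)\cdot\card\Ob J<\kappa_u$, and hence $\card(U\dnw u)<\kappa_u$ and $\card\Ob J<\kappa_u$; together with hypothesis~(1) this bounds $\card\Mor((U\times J)\res(u,j))\le\card(U\dnw u)^2\cdot\card\Mor J$ strictly below $\kappa_u$. Hypothesis~(2) applied to the family $\kappa_v^i:=\card\Conc(\cA(i)/\alpha_v^i)$ (which is $<\kappa_v$ by the assumption on the input) bounds the third summand by $\kappa_u$. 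Hence $\card B_u^j<\kappa_u$ uniformly in $j$, and since $\card\Ob J<\kappa_u$ and the product of two cardinals strictly below the infinite cardinal $\kappa_u$ is again strictly below $\kappa_u$, I conclude $\sum_{j\in\Ob J}\card B_u^j<\kappa_u$.

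The main obstacle is the cardinal arithmetic in (6): one must keep every sum and product strictly below $\kappa_u$, invoking hypothesis~(2) twice at the right moments (once to control the combinatorial factor $\card(U\dnw u)$ and once for the $\Conc$-double sum). Hypotheses~(1) and~(2) are designed exactly so that these inequalities stay strict; everything else in the proof is a routine translation from the single-index L\"owenheim-Skolem property provided by Lemma~\ref{L:LS} to the two-index version of Definition~\ref{D:LS}.
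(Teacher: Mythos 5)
Your proof is correct and takes essentially the same route as the paper's: form the product category, extend the functor to be constant in the $U$-coordinate, apply Lemma~\ref{L:LS}, and verify conditions (1)--(6) of Definition~\ref{D:LS}. The only point of real divergence is the final step establishing $\sum_{j\in\Ob J}\card B_u^j<\kappa_u$: the paper invokes hypothesis~(2) a second time with the family $\kappa_v^j:=\card B_v^j$, whereas you instead observe that your bound on $\card B_u^j$ is uniform in~$j$ (a single $\mu_u<\kappa_u$ works for all $j$) and then multiply by $\card\Ob J<\kappa_u$, using that a product of two cardinals below an infinite $\kappa_u$ stays below $\kappa_u$. Both arguments are valid; the paper's is marginally more robust since it does not need uniformity, while yours is a touch more elementary, avoiding the extra appeal to hypothesis~(2).
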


\begin{proof}
Let $\cA\colon J\to\cV$ be a functor, let $(\alpha_u^j)_{u\in U}^{j\in\Ob J}$ be a family of congruences with all $\alpha_u^j\in\Con\cA(j)$, such that $\sum_{j\in\Ob J}\card\Conc(\cA(j)/\alpha_u^j)<\kappa_u$ for all $u\in U$. We can define a functor $\cA'\colon J\times U \to \cV$ by
\begin{align*}
(j,u)&\mapsto\cA(j) & &\text{for all $(j,u)\in\Ob (J\times U)$,}\\
(f\colon i\to j,\ u\le v)&\mapsto \cA(f) & &\text{for all $(f\colon i\to j,\ u\le v)\in\Mor (J\times U)$.}
\end{align*}
Moreover, $\alpha_u^j$ is a congruence of $\cA'(j,u)$ for all $(j,u)\in\Ob (J\times U)$. So, by Lemma~\ref{L:LS}, there exists a family $(B_u^j)_{(j,u)\in\Ob(J\times U)}$ of algebras such that:
\begin{enumerate}
\item The algebra $B_u^j$ is a subalgebra of $\cA'(j,u)$ for all $(j,u)\in\Ob(J\times U)$.
\item The containment $\cA'(f,u\le v)(B_u^j)\subseteq B_v^k$ holds for every arrow $(f\colon j\to k,\ u\le v)$ of $J\times U$.
\item The morphism $\Con(q_u^j)$ is an isomorphism, where $q_u^j$ denotes the canonical embedding $B_u^j/\alpha_u^j\toinj\cA'(j,u)/\alpha_u^j$, for all $(j,u)\in\Ob (J\times U)$.
\item The following inequality holds, for all $(j,u)\in\Ob(J\times U)$:
\begin{align*}
\card B_u^j\le & \kappa + \card \Mor \big((J\times U)\res (j,u)\big)\\
&+\sum_{(i,v)\unlhd (j,u)\text{ in $J\times U$}}\Big(\card\Conc(\cA'(i,v)/\alpha_v^i)\Big).
\end{align*}
\end{enumerate}
The statements (1)--(5) of Definition \ref{D:LS} are satisfied. Moreover:
\[\card \Mor \big((J\times U)\res (j,u)\big) \le \kappa + \card \Mor J + \card (U\dnw u)<\kappa_u,\]
for all $(j,u)\in\Ob (J\times U)$. As $\card\Conc(\cA'(i,u)/\alpha_u^i)=\card\Conc(\cA(i)/\alpha_u^i)<\kappa_u$, the following inequalities hold:
\begin{align*}
\sum_{(i,v)\unlhd (j,u)\text{ in $J\times U$}} \Big(\card\Conc(\cA'(i,v)/\alpha_v^i)\Big)& \le\sum_{v\le u} \sum_{i\in\Ob J} \Big(\card\Conc(\cA(i)/\alpha_v^i)\Big)\\
&<\kappa_u.
\end{align*}
Thus $\card B_u^j<\kappa_u$, for every $u\in U$ and for every $j\in\Ob J$. So, using again the assumptions of the lemma, the following inequality holds:
\[\sum_{j\in\Ob J}\card B_u^j<\kappa_u,\quad\text{for all $u\in U$}.\tag*{\qed}\]
\renewcommand{\qed}{}
\end{proof}

\begin{lemma}\label{L:conc-on-CD-FG-variety-is-one-to-one}
Let~$\cV$ be a finitely generated congruence-distributive variety of algebras. Let $S$ be a finite \jzs. Then there exist, up to isomorphism, at most finitely many $A\in\cV$ such that $\Conc A\cong S$. Moreover, all such~$A$ are finite.
\end{lemma}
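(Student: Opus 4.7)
The plan is to combine J\'onsson's theorem with Birkhoff's subdirect representation to bound the cardinality of any $A \in \cV$ with $\Conc A \cong S$. First, I would fix a finite algebra $G$ with $\cV = HSP(G)$ (available since $\cV$ is finitely generated). Because $\cV$ is congruence-distributive, J\'onsson's theorem applies and yields that every subdirectly irreducible member of $\cV$ lies in $HS(G)$. In particular, there are only finitely many isomorphism types of subdirectly irreducibles in $\cV$, each of them finite; let $N$ be an upper bound for their cardinalities.

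Next, suppose $A \in \cV$ satisfies $\Conc A \cong S$. Since $S$ is a finite \jzs, every element of $\Conc A$ is compact, and any congruence of $A$ is the join of the finitely many compact congruences below it, hence itself compact. Thus $\Con A = \Conc A \cong S$ is a finite lattice of cardinality $|S|$, so $|\M(\Con A)| \le |S|$ and every meet-irreducible in $\Con A$ is completely meet-irreducible. Birkhoff's subdirect representation theorem then produces a subdirect embedding $A \hookrightarrow \prod_{\mu \in \M(\Con A)} A/\mu$. Each factor $A/\mu$ is a subdirectly irreducible member of $\cV$, so by the first step has cardinality at most $N$. It follows that $|A| \le N^{|S|}$, a uniform bound.

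Finally, since the similarity type of $\cV$ may be taken finite (as $\cV$ is finitely generated) and the underlying set of $A$ has size at most $N^{|S|}$, there are only finitely many algebras of $\cV$ of size at most $N^{|S|}$ up to isomorphism. This simultaneously shows the claim that all such $A$ are finite and that there are at most finitely many of them up to isomorphism.

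The only real input is J\'onsson's theorem; everything else is routine bookkeeping. The single subtle point is the initial reduction $\Con A = \Conc A$, which relies on $\Conc A$ being finite so that every congruence is a finite join of compact ones and hence itself compact; once that is in hand, the subdirect representation and the cardinality estimate are immediate.
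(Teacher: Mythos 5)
Your proof is correct and follows essentially the same route as the paper's: invoke J\'onsson's Lemma to get finitely many subdirectly irreducibles (all finite), reduce $\Con A$ to the finite lattice $S$, and then use the subdirect embedding $A\toinj\prod_{\theta\in\M(\Con A)}A/\theta$ to bound $|A|$. One small inaccuracy in the final paragraph: being finitely generated does not by itself let you assume the similarity type of $\cV$ is finite (one can have a finitely generated variety with infinitely many operation symbols). The conclusion you want --- that there are only finitely many algebras of $\cV$ of cardinality at most $N^{|S|}$ up to isomorphism --- still holds, but the correct justification is that $\cV$ is locally finite, so the free algebra on $N^{|S|}$ generators is finite and every member of $\cV$ of that size is a quotient of it; alternatively, every such $A$ embeds into one of finitely many finite products of subdirectly irreducibles, which have only finitely many subalgebras.
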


\begin{proof}
As~$\cV$ is a finitely generated congruence-distributive variety of algebras, there exist, by J\'onsson's Lemma, only finitely many, up to isomorphism, subdirectly irreducible algebras in~$\cV$, and they are all finite.
Let $A\in\cV$ such that $\Conc A\cong S$. Recall that $\M(\Con A)$ denote the set of all completely meet-irreducible elements of $\Con A$, hence $A/\theta$ is subdirectly irreducible for all $\theta\in\M(\Con A)$.
As~$A$ embeds into the product $A\toinj\prod_{\theta\in \M(\Con A)}A/\theta$, and $\M(\Con A)\cong\M(\Id S)$, the conclusion follows.
\end{proof}

\begin{lemma}\label{L:LS-FGCDV}
Let~$\cV$ be a finitely generated congruence-distributive variety of algebras, let $U$ be a lower finite poset, let~$J$ be a finite poset, put $\kappa_u=\aleph_0$ for all $u\in U$. Then~$\cV$ is $(U,J,\vec\kappa)$-L\"owenheim-Skolem.
\end{lemma}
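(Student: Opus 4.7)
The plan is to argue directly, since Lemma~\ref{L:LS2} cannot be applied when $\kappa_u=\aleph_0$ (its hypotheses force $\kappa_u$ to exceed some infinite $\lambda$). Given $\cA\colon J\to\cV$ and congruences $(\alpha_u^j)$ with $\sum_{j\in\Ob J}\card\Conc(\cA(j)/\alpha_u^j)<\aleph_0$, each $\Conc(\cA(j)/\alpha_u^j)$ is finite; since $\cA(j)/\alpha_u^j\in\cV$, Lemma~\ref{L:conc-on-CD-FG-variety-is-one-to-one} forces $\cA(j)/\alpha_u^j$ to be finite. Moreover, a finitely generated congruence-distributive variety is locally finite (a standard consequence of J\'onsson's Lemma: the free algebra on $n$ generators embeds into a finite product of finite subdirectly irreducibles), so every finitely generated subalgebra of $\cA(j)$ is finite.

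For each $u\in U$ and $j\in\Ob J$ I would pick a finite transversal $X_u^j\subseteq\cA(j)$ mapping onto the finite quotient $\cA(j)/\alpha_u^j$, and then build the family $(B_u^j)$ by induction on $u$, which is legitimate since $U$ is lower finite. At stage $u$, assuming that $B_v^j$ has been defined and is finite for every $v<u$ and every $j\in\Ob J$, I would set $Y_u^j=X_u^j\cup\bigcup_{v<u}B_v^j$, a finite subset of $\cA(j)$, and let $B_u^j$ be the subalgebra of $\cA(j)$ generated by $\bigcup_{i\le j\text{ in }J}\cA(i\le j)(Y_u^i)$. Because $J$ is a finite poset and each $Y_u^i$ is finite, the generating set is finite, and local finiteness of $\cV$ then gives $B_u^j$ finite.

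It remains to verify clauses (1)--(6) of Definition~\ref{D:LS}. Clauses (1), (2), and (6) are immediate. For (3), $B_u^j\supseteq Y_u^j\supseteq B_v^j$ for every $v<u$, and transitivity extends this to $v\le u$. For (4), if $f\colon j\to k$ in $J$, then functoriality of $\cA$ yields $\cA(f)(\cA(i\le j)(y))=\cA(i\le k)(y)$ for every $i\le j$ and every $y\in Y_u^i$, so $\cA(f)$ sends each generator of $B_u^j$ into the generating set of $B_u^k$, whence $\cA(f)(B_u^j)\subseteq B_u^k$. For (5), since $X_u^j\subseteq B_u^j$ and $X_u^j$ surjects onto $\cA(j)/\alpha_u^j$, the canonical embedding $q_u^j\colon B_u^j/\alpha_u^j\toinj\cA(j)/\alpha_u^j$ is both surjective and injective, hence an isomorphism of algebras, so $\Con q_u^j$ is an isomorphism. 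The only subtle point is that a single generation pass suffices for clause~(4): this relies on $J$ being a poset (so that composition of arrows is forced by the order) together with functoriality of $\cA$, which lets the generators of each $B_u^j$ be pushed forward along every arrow without requiring a further closure step.
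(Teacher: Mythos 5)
Your proof is correct and follows essentially the same approach as the paper. The only difference is cosmetic: you define the $B_u^j$ recursively along the well-founded order on~$U$, whereas the paper gives an equivalent direct definition (the subalgebra generated by all $\cA(i\le j)(Q_v^i)$ for $v\le u$ and $i\le j$); the core ingredients --- finiteness of the quotients from Lemma~\ref{L:conc-on-CD-FG-variety-is-one-to-one}, local finiteness of~$\cV$, and pushing finite transversals along the arrows of the finite poset~$J$ so that no further closure pass is needed for clause~(4) --- are identical.
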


\begin{proof}
Let $\cA\colon J\to\cV$ be a functor, let $(\alpha_u^j)_{u\in U}^{j\in\Ob J}$ be a family of congruences, with all $\alpha_u^j\in\Con(\cA(j))$, such that $\card\Conc(\cA(j)/\alpha_u^j)<\aleph_0$ for all $u\in U$ and all $j\in J$.

By Lemma~$\ref{L:conc-on-CD-FG-variety-is-one-to-one}$, $\cA(j)/\alpha_u^j$ is finite for all $u\in U$ and all $j\in J$. Let $Q_u^j$ be a finite subset of $\cA(j)$ such that $\cA(j)/\alpha_u^j=\setm{q/\alpha_u^j}{q\in Q_u^j}$ for all $j\in J$ and all $u\in U$. Let $B_u^j$ be the subalgebra of $\cA(j)$ generated by $\bigcup\setm{\cA(i,j)(Q_v^i)}{v\le u\text{ and } i\le j}$ for all $j\in J$ and all $u\in U$. As~$\cV$ is finitely generated, all objects of~$\cV$ are locally finite, and so $B_u^j$ is finite for all $j\in J$ and all $u\in U$. Moreover the following statements hold:
\begin{enumerate}
\item The algebra $B_u^j$ is a subalgebra of $\cA(j)$ for all $u\in U$ and all $j\in \Ob J$.
\item The algebra $B_u^j/\alpha_u^j=\cA(j)/\alpha_u^j$ belongs to~$\cV$ for all $u\in U$ and all $j\in\Ob J$.
\item The containment $B_u^j\subseteq B_v^j$ holds for all $u\le v$ in $U$ and all $j\in\Ob J$.
\item The containment $\cA(j,k)(B_u^j)\subseteq B_u^k$ holds for every $u\in U$ and every $j\le k$ in~$J$.
\item The canonical embedding $q_u^j\colon B_u^j/\alpha_u^j\toinj A^j/\alpha_u^j$ is an isomorphism, so $\Con(q_u^j)$ is an isomorphism.
\item The inequality $\sum_{j\in \Ob J}\card{B_u^j}<\aleph_0$ holds for all $u\in U$.\qed
\end{enumerate}
\renewcommand{\qed}{}
\end{proof}

\section{Kernels, supported posets, and norm-coverings}\label{S:norm-covering}
\begin{definition}\label{D:KerSupp}
A finite subset $V$ of a poset $U$ is a \emph{kernel}, if for every $u\in U$,
there exists a largest element $v\in V$ such that $v\le u$. We denote
this element by $V\cdot u$.

We say that $U$ is \emph{supported}, if every finite subset of $U$ is
contained in a kernel of~$U$.
\end{definition}

We denote by $V\cdot \bu$ the largest element of $V\cap\bu$, for every
kernel $V$ of $U$ and every ideal~$\bu$ of~$U$.
As an immediate application of the finiteness of kernels, we obtain
the following.

\begin{lemma}\label{L:kersupp}
Any intersection of a nonempty collection of kernels of a poset $U$ is a
kernel of $U$.
\end{lemma}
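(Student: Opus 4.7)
The plan is to first reduce an arbitrary intersection to a finite one, and then reduce the finite case to the case of two kernels via induction. Fix a nonempty collection $(V_\alpha)_{\alpha\in I}$ of kernels and set $W=\bigcap_{\alpha\in I}V_\alpha$. Picking any $\alpha_0\in I$, we have $W\subseteq V_{\alpha_0}$, hence $W$ is finite. Moreover, the subsets $V_\alpha\cap V_{\alpha_0}$ form a family of subsets of the finite set $V_{\alpha_0}$, so only finitely many distinct intersections occur; hence there are $\alpha_1,\dots,\alpha_n\in I$ with $W=V_{\alpha_0}\cap V_{\alpha_1}\cap\cdots\cap V_{\alpha_n}$. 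This observation allows me to restrict attention to the finite intersection, and a straightforward induction on $n$ further reduces the problem to the two-kernel case: if $V_1$ and $V_2$ are kernels, then so is $V_1\cap V_2$.

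For the two-kernel case, the key idea is an alternating iteration. Given $u\in U$, I would set $v_0:=V_1\cdot u$, and then define $w_n:=V_2\cdot v_n$ and $v_{n+1}:=V_1\cdot w_n$ for $n\ge 0$. By definition of $V_i\cdot\,\cdot\,$, each successor is dominated by its predecessor, so the sequence $v_0\ge w_0\ge v_1\ge w_1\ge\cdots$ is weakly decreasing in~$U$. Since this sequence is contained in the finite set $V_1\cup V_2$, it must stabilize at some common value $v$, which simultaneously satisfies $V_1\cdot v=v$ and $V_2\cdot v=v$; thus $v\in V_1\cap V_2$ and $v\le u$.

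It remains to verify that $v$ is the \emph{largest} element of $(V_1\cap V_2)\cap\dnw u$. For any candidate $x\in V_1\cap V_2$ with $x\le u$, I would argue by a simple induction: $x\le u$ and $x\in V_1$ yields $x\le V_1\cdot u=v_0$; then $x\le v_0$ and $x\in V_2$ yields $x\le V_2\cdot v_0=w_0$; continuing in this fashion shows $x\le v_n$ and $x\le w_n$ for every~$n$, and therefore $x\le v$. This gives the maximum element and completes the proof.

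The main obstacle is really the decision to iterate rather than trying to describe $V_1\cdot u\wedge V_2\cdot u$ directly (which need not exist nor even be in $V_1\cap V_2$). Once one sees that a termination argument via finiteness of $V_1\cup V_2$ is available, all other steps are routine bookkeeping. The reduction from arbitrary intersections to finite ones is similarly forced by the finiteness of each individual kernel, and is the point at which Definition~\ref{D:KerSupp}'s finiteness clause is essential.
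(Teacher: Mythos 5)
Your proof is correct. The paper gives no argument at all for this lemma (it is labeled an ``immediate application of the finiteness of kernels''), so there is nothing to compare line-by-line; what you have done is fill in the unstated details, and each step checks out: the reduction of an arbitrary intersection to a finite one via the finiteness of $V_{\alpha_0}$, the induction down to two kernels, the alternating iteration $v_0 \ge w_0 \ge v_1 \ge \cdots$ confined to the finite set $V_1 \cup V_2$ and hence eventually constant, and the maximality argument showing any $x \in V_1 \cap V_2$ below $u$ lies below every term of the sequence. One small remark on economy: you can collapse both reductions into a single iteration by fixing $\alpha_0$, setting $u_1 = V_{\alpha_0}\cdot u$, and then repeatedly replacing $u_n$ by $V_{\alpha_0}\cdot(V_\alpha\cdot u_n)$ for some $\alpha$ with $u_n \notin V_\alpha$; all iterates stay in $V_{\alpha_0}$, and the descent is strict because $V_\alpha\cdot u_n < u_n$, so termination follows from $\card V_{\alpha_0} < \infty$ alone, with no need to first pass to a finite subfamily or to split off a two-kernel base case. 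This is probably what makes the author call the lemma ``immediate,'' but your version is equally valid.
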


\begin{example}\label{E:supp}
Let $\kappa$ be a cardinal, we put $T_\kappa=\kappa\sqcup\set{\bot}$ with order defined by $x\le y$ if either $x=y$ or $x=\bot$. Then $T_\kappa$ is a supported poset, and the kernels of $T_\kappa$ are all the finite subsets containing $\bot$.
\end{example}

\begin{definition}\label{D:normcovering}
A \emph{norm-covering} of a poset $I$ is a pair $(U,\module{\cdot})$, where $U$ is a supported poset and $\module{\cdot}\colon U\to I$, $u\mapsto\module{u}$ is an order-preserving map.

A \emph{sharp ideal} of $(U,\module{\cdot})$ is an ideal $\bu$ of $U$ such that $\setm{\module{v}}{v\in \bu}$ has a largest element, we denote this element by $\module{\bu}$. For example, for every $u\in U$, the principal ideal $U\dnw u$ is sharp; we shall often identify~$u$ and~$U\dnw u$. We denote by $\Ids(U,\module{\cdot})$ the set
of all sharp ideals of $(U,\module{\cdot})$, partially ordered by inclusion.

A sharp ideal $\bu$ of $(U,\module{\cdot})$ is \emph{extreme},
if there is no sharp ideal $\bv$ with $\bv>\bu$ and $\module{\bv}=\module{\bu}$. We
denote by $\Ide(U,\module{\cdot})$ the set of all extreme ideals of $(U,\module{\cdot})$.

The norm-covering is \emph{tight} if the map
$\Ide(U,\module{\cdot})\dnw \bu\to I\dnw\module{\bu}$, $\bv\mapsto \module{\bv}$ is a poset isomorphism for all $\bu\in \Ide(U,\module{\cdot})$.

Let $\vec\kappa=(\kappa_i)_{i\in I}$ be a family of cardinal numbers.
We say that $(U,\module{\cdot})$ is \emph{$\vec\kappa$-compatible},
if for every order-preserving map $F\colon\Ide(U,\module{\cdot})\to\mathfrak{P}(U)$ such that $\card F(\bu)<\kappa_{\module{\bu}}$ for all $\bu\in \Ide(U,\module{\cdot})^=$, there exists an order-preserving map $\sigma\colon I\to \Ide(U,\module{\cdot})$ such that:
\begin{enumerate}
\item The equality $\module{\sigma(i)}=i$ holds for all $i\in I$.
\item The containment $F(\sigma(i))\cap\sigma(j)\subseteq\sigma(i)$ holds for all $i\leq j$ in~$I$.
\end{enumerate}
We will say `$\kappa$-compatible' instead of $\vec \kappa$-compatible in case $\kappa_i=\kappa$ for all $i\in I$.
\end{definition}

Observe that the condition $(2)$ implies that $V\cdot\sigma(i)=V\cdot\sigma(j)$, for any $i\le j$ in~$I$ and any kernel $V$ of $U$ contained in $F(\sigma(i))$.

\begin{example}
Let $T_\kappa$ as defined in Example~\ref{E:supp}. We consider $\set{0,1}$ the two-element chain. We put :
\begin{align*}
\module{\cdot}\colon T_\kappa &\to\set{0,1}\\
x&\mapsto\module{x}=\begin{cases}
0  & \text{if $x=\bot$,}\\
1  & \text{otherwise.}
\end{cases}
\end{align*}
Thus $(T_\kappa,\module{\cdot})$ is a norm-covering of $\set{0,1}$. Moreover :
\[
\Ide(T_\kappa,\module{\cdot})=\setm{\dnw u}{u\in T_\kappa}=\set{\set{\bot}}\cup\setm{\set{\bot,\alpha}}{\alpha\in\kappa}\cong T_\kappa.
\]
Let $f\colon\Ide(T_\kappa,\module{\cdot})\to\Pow(\kappa)$ such that $\card f(\bu)<\kappa$ for all $\bu\in\Ide(T_\kappa,\module{\cdot})^=$. Hence $\card f(\set{\bot})<\kappa$. Let $\alpha\in \kappa-f(\set{\bot})$. Let $\sigma(0)=\set{\bot}$ and $\sigma(1)=\set{\bot,\alpha}$, we have $\module{\sigma(0)}=0$ and $\module{\sigma(1)}=1$. Moreover $f(\sigma(0))\cap\sigma(1)\subseteq\sigma(0)$. Hence $(T_\kappa,\module{\cdot})$ is a $\kappa$-compatible norm-covering of $\set{0,1}$.
\end{example}

The following construction is a generalization of this example, but we give a norm-covering of a tree instead of one of the two-element chain.

\begin{proposition}\label{P:Existkappcomp}
Let $T$ be a well-founded tree and let $\vec\kappa=(\kappa_t)_{t\in T}$
and $(\kappa'_t)_{t\in T^-}$ be families of infinite cardinals such
that for any $t\in T^-$ the following statements hold:
\begin{enumerate}
\item If $t$ has a lower cover, then $\kappa'_t\ge\kappa_{t_*}$.
\item If $t$ has no lower cover, then for any family $(\kappa''_s)_{s<t}$ of cardinals such that $\kappa''_s<\kappa_s$ for any $s<t$,
the inequality $\sum_{s<t}\kappa''_s<\kappa'_t$ holds.
\end{enumerate}
Then there exists a tight $\vec\kappa$-compatible norm-covering $(U,\module{\cdot})$ of $T$ such that $\card U=\sum_{t\in T^-}\kappa'_t$.
\end{proposition}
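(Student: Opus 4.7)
The idea is to build $U$ by well-founded recursion on $T$, placing $\kappa'_t$ fresh elements of norm $t$ at each non-minimal level, and then verify the three properties (supported, tight, $\vec\kappa$-compatible) directly.

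\textbf{Construction.} Let $\bot$ denote the minimum of $T$ (which exists since $T$ is a tree). Put one element $*_\bot$ of norm $\bot$ into $U$. For each $t\in T^-$, introduce a set $A_t$ of $\kappa'_t$ fresh elements of norm $t$. The order on $U$ is defined so that the principal lower set $U\dnw u$ of any $u\in A_t$ is a chain containing exactly one element per norm $s\le t$ in $T$. Concretely, for successor $t$ (with lower cover $t_*$), distribute the $\kappa'_t$ elements of $A_t$ above the previously built norm-$t_*$ elements so that every such norm-$t_*$ element receives at least $\kappa_{t_*}$ extensions (possible, using condition~(1), by allowing the total count $\kappa'_t$ to be packed into that many branches). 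For limit $t$, assign to each $u\in A_t$ a cofinal branch in the already-built $U\dnw{T\dnw t\setminus\{t\}}$, so that $U\dnw u$ is a chain meeting every norm $s<t$. The total cardinality is $1+\sum_{t\in T^-}\kappa'_t=\sum_{t\in T^-}\kappa'_t$ since each $\kappa'_t$ is infinite.

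\textbf{Supported and tight.} By construction, $U$ is itself a (well-founded) tree: every principal lower set is a chain, and $*_\bot$ is its minimum. Hence every finite $F\subseteq U$ is contained in the finite kernel $V=\bigcup_{u\in F}U\dnw u$: for any $w\in U$, the set $V\cap U\dnw w$ sits inside the chain $U\dnw w$, so it has a largest element (it contains $*_\bot$, so is nonempty). Every ideal of $U$ is then a chain, and the extreme sharp ideals are exactly the principal ideals $U\dnw u$. Tightness reduces to noting that the map $U\dnw u\mapsto|u|$ is an order-isomorphism from the chain of extreme sharp ideals below $U\dnw u$ onto $T\dnw|u|$, which holds by the ``one element per norm'' design.

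\textbf{$\vec\kappa$-compatibility.} Given an order-preserving $F\colon\Ide(U,\module{\cdot})\to\Pow(U)$ with $\card F(\bu)<\kappa_{\module{\bu}}$ for all $\bu\in\Ide(U,\module{\cdot})^=$, define $\sigma(t)$ by well-founded recursion on $t\in T$. Set $\sigma(\bot)=\{*_\bot\}$. For $t\in T^-$ with lower cover $t_*$, pick $u\in A_t$ with $U\dnw u\supseteq\sigma(t_*)$ and $u\notin F(\sigma(t_*))$: by construction there are at least $\kappa_{t_*}$ candidates, and $\card F(\sigma(t_*))<\kappa_{t_*}$ by hypothesis on $F$ together with condition~(1); set $\sigma(t)=U\dnw u$. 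For limit $t\in T^-$, the chain $\bar\sigma=\bigcup_{s<t}\sigma(s)$ is an ideal of $U$; by construction each $u\in A_t$ has a cofinal branch and there are $\kappa'_t$ elements of $A_t$ sitting above $\bar\sigma$. The forbidden set $\bigcup_{s<t}F(\sigma(s))$ has cardinality $\sum_{s<t}\card F(\sigma(s))<\kappa'_t$ by condition~(2), so one can pick $u\in A_t$ above $\bar\sigma$ avoiding all $F(\sigma(s))$, and set $\sigma(t)=\bar\sigma\cup U\dnw u$. Condition $\module{\sigma(t)}=t$ holds by design. The avoidance condition $F(\sigma(s))\cap\sigma(t)\subseteq\sigma(s)$ for $s\le t$ follows because any element added when going from $\sigma(s)$ to $\sigma(t)$ was, by construction, chosen outside $F(\sigma(s))$; for intermediate $s<s'\le t$, order-preservation of $F$ gives $F(\sigma(s))\subseteq F(\sigma(s'))$, so the condition propagates.

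\textbf{Main obstacle.} The bookkeeping in the construction of $U$: one must simultaneously ensure that the total size $\sum_{t\in T^-}\kappa'_t$ is achieved, that each previously built branch is extended ``enough'' (at least $\kappa_{t_*}$ times) in the successor case, and that coherent cofinal branches can be assigned in the limit case. Condition~(1) gives just the room needed in the successor step, and condition~(2) — which is the subtle one — gives the margin that lets the limit step avoid the accumulated forbidden sets. Once these budgetary estimates are set up correctly, the three verifications are straightforward.
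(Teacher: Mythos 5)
There is a genuine gap, and it is exactly at the place you flag as ``the main obstacle.'' The decisive structural choice in your construction is that $U$ is an honest \emph{tree}: each $u\in A_t$ lies above a single cofinal branch, and consequently every sharp ideal with norm $t$ is \emph{principal}, of the form $U\dnw u$ with $u\in A_t$. When $t$ has no lower cover, the chain $\bar\sigma=\bigcup_{s<t}\sigma(s)$ is not itself sharp, so your limit step is forced to find some $u\in A_t$ above $\bar\sigma$. But the number of cofinal branches below level $t$ is a \emph{product} $\prod_{s<t}(\text{branching factor at }s)$, whereas you only have $\kappa'_t=\card A_t$ nodes at level $t$ and the hypotheses of the proposition only bound \emph{sums} (condition~(2) is $\sum_{s<t}\kappa''_s<\kappa'_t$). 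For example, take $T=\omega+1$ and $\kappa_s=\kappa'_s=\aleph_1$ for all $s$. Your tree has $\aleph_1$-fold branching at every finite level, hence at least $\aleph_1^{\aleph_0}$ cofinal branches below level $\omega$, but only $\aleph_1$ nodes at level $\omega$. If CH fails, $\aleph_1^{\aleph_0}>\aleph_1$, so almost no branch receives a node at the top level; in particular ``there are $\kappa'_t$ elements of $A_t$ sitting above $\bar\sigma$'' is not something your construction provides, and you have given no mechanism to steer $\sigma$ into one of the scarce blessed branches against an adversarial $F$. (Nor can such a mechanism exist in general within a tree: the assertion to be proved is a ZFC theorem, and the tree approach makes the cardinal arithmetic CH-sensitive.) A related imbalance already appears at successor levels: requiring each of the $\kappa'_{t_*}$ nodes at level $t_*$ to receive $\ge\kappa_{t_*}$ extensions needs $\kappa'_{t_*}\cdot\kappa_{t_*}\le\kappa'_t$, which condition~(1) (which only gives $\kappa_{t_*}\le\kappa'_t$) does not deliver.

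The paper's construction avoids all of this precisely by \emph{not} making $U$ a tree. It takes $U$ to be the set of functions $u$ on \emph{finite} chains $C\subseteq T^-$ with $u_s\in\kappa'_s$, ordered by extension, and $\module{u}=\bigvee\dom u$. Then $\card U=\sum_{t\in T^-}\kappa'_t$, but the extreme ideals of norm $t$ are the ideals $\setm{x\res P}{P\text{ finite }\subseteq\phi(t)}$ for total choice functions $x\in\prod_{s\in\phi(t)}\kappa'_s$ — a \emph{product}-sized family, living inside a \emph{sum}-sized $U$, because each extreme ideal is a (typically non-principal) subset of $U$ rather than a single element. At a limit level $t$, the extreme ideal $\sigma(t)=x\res\phi(t)$ is obtained for free once $x_s$ has been chosen for each $s\in\phi(t)$; no ``top node'' must exist. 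That is the key idea your tree approach cannot reproduce.

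One minor slip worth noting: your proposed kernel $V=\bigcup_{u\in F}U\dnw u$ is infinite as soon as $T$ has infinite chains; the intended finite kernel in a tree is simply $F\cup\set{*_\bot}$, since for any $w\in U$ the set $V\cap U\dnw w$ is a finite chain containing $*_\bot$ and so has a largest element. The verifications of supportedness and tightness would be fine with this correction, but the $\vec\kappa$-compatibility argument fails as explained above.
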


\begin{proof}
We denote by $\bot$ the least element of $T$, and we put $\phi(t)=(T\dnw t)-\set{\bot}$, for any $t\in T$. We put:
 \[
 U=\bigcup\Setm{\prod_{t\in C}\kappa'_t}{\text{$C$ is a finite chain of $T^-$}};
 \]
We view the elements of U as (partial) functions and ``to be greater" means ``to extend".

We put $\module{u}=\bigvee\dom u$, for any $u\in U$. We should note that the chain $C$ may be empty (in the definition of $U$), and $\module{\emptyset}=\bot$.

We prove that $U$ is supported. Let $V$ be a finite subset of $U$. Put:
\[Y_s=\setm{u_s}{u\in V\text{ and }s\in\dom u},\quad\text{for all $s\in T^-$}\]
and put $D=\setm{s\in T^-}{Y_s\not=\emptyset}$, hence $D=\bigcup_{u\in V}\dom u$. Put:
\[W=\setm{u\in U}{\dom u\subseteq D\text{ and }(\forall t\in\dom u)(u_t\in Y_t)}\]

The sets $D$, and $Y_s$, for all $s\in T^-$,
are finite, so $W$ is finite. As $u_s\in Y_s$ for all $u\in V$ and $s\in\dom u$, $V$ is contained in $W$.

Let $u\in U$ and $S=\setm{s\in \dom u}{u_s\in Y_s}$, then $u\res S\in  
W$. The containment $\dom v\subseteq S$ holds for all $v\in W\dnw u$, so $u\res S$ is
the largest element of $W$ smaller than~$u$, and so $W$ is a kernel
of $U$ containing $V$. Thus $(U,\module{\cdot})$ is a norm-covering of~$T$.

The set $\setm{x\res P}{\text{$P$ finite subset of $\phi(t)$}}$ is an extreme ideal of $(U,\module{\cdot})$, for all $t\in T$ and all $x\in\prod_{s\in\phi(t)}\kappa'_s$. We identify this ideal with $x$.
Moreover, all the extreme ideals of $(U,\module{\cdot})$ are of this form.
Thus $(U,\module{\cdot})$ is a tight norm-covering of~$T$.

Let $F\colon\Ide(U,\module{\cdot})\to\mathfrak{P}(U)$ be an order-preserving map such that $\card F(\bu)<\kappa_{\module{\bu}}$ for all $\bu\in \Ide(U,\module{\cdot})^=$. Put:
\[
F_t(\bu)=\{v_t\mid v\in F(\bu)\text{ and } t\in\dom v\},\quad\text{for all $t\in T^-$ and all $\bu\in \Ide(U,\module{\cdot})$.}
\]
Thus $\card F_t(\bu)\le\card F(\bu)<\kappa_{\module{\bu}}$, for all $\bu\in \Ide(U,\module{\cdot})^=$ and all $t\in T^-$.

Let $S$ be a lower subset of $T^-$ and let $x\in\prod_{t\in S}
\kappa'_t$ such that $x_t\notin F_t(x\res\phi(s))$ for all $s<t$ in
$S$. Let $t\in T^-$ such that $t\notin S$ and $\phi(t)-\set{t}\subseteq S$.
If $t$ has a lower cover, then $\card \bigcup_{s<t} F_{t}(x\res \phi(s))=\card F_{t}(x\res \phi(t_*))<\kappa_{t_*}\le\kappa'_t$. If $t$ has no lower cover, then $\card \bigcup_{s<t} F_{t}(x\res \phi(s))\le\sum_{s<t}\card F_{t}(x\res \phi(s))<\kappa'_t$.
In both cases, we can extend $x$ to $S\cup\set{t}$ by picking $x_t\notin F_t(x\res\phi(s))$ for all $s<t$ in~$T$.

As $T$ is well-founded, we can construct by induction $x\in\prod_{t\in T^-}\kappa'_t$ such that $x_t\notin F_t(x\res\phi(s))$ for all $s<t$ in~$T$. The map $\sigma\colon T\to \Ide(U,\module{\cdot})$, $t\mapsto x\res\phi(t)$ is order-preserving, and $\module{\sigma(t)}=t$, for all $t\in T$.

Let $s<t$ in $T$. Let $u\in F(x\res \phi(s))\cap (x\res \phi(t))$, and let $C=\dom u$. So $C\subseteq\phi(t)$, and $u=x\res C$. Let $s<r\le t$, by construction $x_r\notin F_r(x\res\phi(s))$, so $r\notin C$. Thus $C\subseteq\phi(s)$, and so $u=x\res C$ belongs to $x\res\phi(s)$.
\end{proof}

\begin{corollary}\label{C:Existnormcovering}
Let $T$ be a well-founded tree and let $\kappa$ be an infinite
cardinal such that $\card{T}\le\kappa$ and $\card(\dnw t)<\cf\kappa$ for all $t\in T$. Then
there exists a tight $\kappa$-compatible norm-covering $(U,\module{\cdot})$ of $T$ such that $\card U=\kappa$.
\end{corollary}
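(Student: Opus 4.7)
The plan is to derive this corollary as a straightforward specialization of Proposition~\ref{P:Existkappcomp} using the uniform choice $\kappa_t=\kappa$ for every $t\in T$ and $\kappa'_t=\kappa$ for every $t\in T^-$. With this choice, a tight $\vec\kappa$-compatible norm-covering is exactly a tight $\kappa$-compatible norm-covering, and the cardinality bound $\card U=\sum_{t\in T^-}\kappa'_t$ supplied by the proposition becomes $\card T^-\cdot\kappa=\kappa$, using $\card T\le\kappa$ together with the fact that $T^-$ is nonempty as soon as $T$ has more than one point.

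Once the uniform choice is set, I would verify the two hypotheses of Proposition~\ref{P:Existkappcomp}. Condition~(1) is immediate: whenever $t\in T^-$ has a lower cover $t_*$, the inequality $\kappa'_t=\kappa=\kappa_{t_*}$ is literally an equality. Condition~(2) is the real content. If $t\in T^-$ has no lower cover and $(\kappa''_s)_{s<t}$ is any family with $\kappa''_s<\kappa_s=\kappa$, then the index set $\setm{s}{s<t}$ is contained in $T\dnw t$, so its cardinality is strictly less than $\cf\kappa$ by the hypothesis of the corollary. A sum of fewer than $\cf\kappa$ cardinals, each strictly less than $\kappa$, is strictly less than $\kappa$ (standard cardinal arithmetic); hence $\sum_{s<t}\kappa''_s<\kappa=\kappa'_t$, as required.

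Having verified the hypotheses, I invoke Proposition~\ref{P:Existkappcomp} to obtain a tight $\kappa$-compatible norm-covering $(U,|\cdot|)$ of $T$ with $\card U=\sum_{t\in T^-}\kappa$, and I conclude $\card U=\kappa$ by the cardinal computation above. The degenerate case in which $T$ is a singleton (so $T^-=\emptyset$ and the proposition would give $U=\set{\emptyset}$) can be handled separately by taking $U$ to be $\kappa$ with the unique constant map to $T$, for which the conditions of tightness and $\kappa$-compatibility are trivial.

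The main (and essentially only) obstacle is the cofinality step in checking condition~(2): one must be sure that the chains $\dnw t$ governing the inductive construction inside the proof of Proposition~\ref{P:Existkappcomp} are short enough (fewer than $\cf\kappa$ elements) that the partial extensions never overflow the bound $\kappa$. This is exactly what the hypothesis $\card(\dnw t)<\cf\kappa$ is designed to guarantee, so there is no real difficulty beyond recognizing which cardinal-arithmetic identity to invoke.
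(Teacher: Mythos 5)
Your proof is correct and is essentially the paper's proof: the paper's own argument is simply ``Put $\kappa_t=\kappa'_t=\kappa$, for any $t\in T$. The assumptions of Proposition~\ref{P:Existkappcomp} are clearly satisfied,'' and you have merely spelled out the verification (in particular the cofinality step for condition~(2) and the cardinality computation), which is exactly what the paper leaves implicit. The only small wrinkle is your remark on the singleton case: an order must be specified on $U$ for it to be a supported poset (a bare $\kappa$-element antichain is not supported), so one should say, e.g., $U=T_\kappa$ as in Example~\ref{E:supp}; but this degenerate case is not discussed by the paper either and does not affect the substance.
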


\begin{proof}
Put $\kappa_t=\kappa'_t=\kappa$, for any $t\in T$. The assumptions of
Proposition~\ref{P:Existkappcomp} are clearly satisfied.
\end{proof}

\section{Condensates}\label{S:Condensate}

\begin{definition}
Let $I$ be a poset, let $(U,\module{\cdot})$ be a norm-covering of $I$, and let $\vec{A}=(A_i,f_{i,j})_{i\le j\text{ in }I}$ be a diagram of algebras of the same similarity type.
\begin{itemize}
\item A \emph{support} $V$ of $a\in\prod_{u\in U}A_{\module{u}}$ is a
kernel $V$ of $U$ such that $a_u=f_{\module{V\cdot u},\module{u}}(a_{V\cdot u})$ for all $u\in U$.
\item We put:
\[\Cond_U(\vec{A},V)=\Setm{a\in\prod_{u\in U}A_{\module{u}}}{\text{$V$ is a support of $a$}},\quad\text{for any kernel $V$ of $U$.}\]
The \emph{condensate} of $\vec A$ with respect to $U$ is:
\[\Cond(\vec{A},U)=\bigcup\Setm{\Cond_U(\vec{A},V)}{\text{$V$ is a kernel of $U$}}.\]
\item We denote by $\supp a$ the smallest support of~$a$, and we call it \emph{the support} of~$a$.
\end{itemize}
By Lemma~\ref{L:kersupp} the support of $a$ exists, for all $a\in\Cond(\vec{A},U)$.
\end{definition}

\begin{lemma}\label{L:prop_de_base_cond}
With the notations of the previous definition, the following  
statements hold.
\begin{enumerate}
\item The set $\Cond_U(\vec{A},V)$ is a subalgebra of $\prod_{u\in U}
A_{\module{u}}$, for each kernel $V$ of $U$.
\item The containment $\Cond_U(\vec{A},V)\subseteq \Cond_U(\vec{A},W)$ holds, for all kernels $V$ and $W$ of $U$ such that $V\subseteq W$.
\item The set $\Cond(\vec{A},U)$ is a subalgebra of $\prod_{u\in U}A_{\module{u}}$, and it is the directed union
of the algebras $\Cond_U(\vec{A},V)$, with $V$ kernel of $U$.
\item The morphism $\pi_V\colon\Cond_U(\vec{A},V)\to \prod_{v\in V}A_{\module{v}}$, $a\mapsto a\res V$ is an isomorphism, for any kernel $V$ of $U$.
\item The algebra $\Cond(\vec{A},U)$ is a directed union of finite products of the $A_i$s.
\item The morphism $\pi_u\colon\Cond(\vec{A},U)\to A_{\module{u}}$, $a\mapsto a_u$ is onto, for all $u\in U$.
\item The map:
\begin{align*}
\pi_{\bu}\colon\Cond(\vec{A},U)&\to A_{\module{\bu}}\\
a&\mapsto f_{\module{\supp(a)\cdot \bu},\module{\bu}}(a_{\supp(a)\cdot \bu})
\end{align*}
is a surjective morphism of algebras, for all $\bu\in\Ids(U,\module{\cdot})$.
Furthermore $\pi_{\bu}(a)=f_{\module{V\cdot \bu},\module{\bu}}(a_{V\cdot \bu})$,
for any kernel~$V$ of~$U$ and any $a\in \Cond_U(\vec{A},V)$.
\end{enumerate}
\end{lemma}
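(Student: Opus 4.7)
The proof consists of routine verifications, but they all rest on one small combinatorial identity about kernels and on the defining support condition. I would organize it in three stages.

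\emph{First stage: parts (1)--(3).} Observe that the operations of $\prod_{u\in U}A_{|u|}$ are coordinate-wise, and for a fixed kernel $V$ and fixed $u\in U$ the element $V\cdot u\in V$ is \emph{determined by $u$ alone}, not by $a$. Hence the condition $a_u=f_{|V\cdot u|,|u|}(a_{V\cdot u})$ is a conjunction of conditions, each of the form ``two coordinates are related by a fixed homomorphism''; since each $f_{i,j}$ commutes with the operations, this condition is preserved by them. This yields~(1). For~(2), with $V\subseteq W$ kernels and $u\in U$, I would first prove the little identity $V\cdot(W\cdot u)=V\cdot u$: the inequality $\le$ comes from $V\cdot(W\cdot u)\le W\cdot u\le u$ together with $V\cdot(W\cdot u)\in V$; the inequality $\ge$ from $V\cdot u\in V\subseteq W$, $V\cdot u\le u$, hence $V\cdot u\le W\cdot u$ and thus $V\cdot u\le V\cdot(W\cdot u)$. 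Combining this identity with functoriality, $f_{|W\cdot u|,|u|}\circ f_{|V\cdot u|,|W\cdot u|}=f_{|V\cdot u|,|u|}$, yields the desired inclusion. Part~(3) then follows: given $a\in\Cond_U(\vec A,V_1)$ and $b\in\Cond_U(\vec A,V_2)$, since $U$ is supported the finite set $V_1\cup V_2$ is contained in some kernel $W$, and by~(2) both $a$ and $b$ lie in $\Cond_U(\vec A,W)$, so the union is directed and is a subalgebra.

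\emph{Second stage: parts (4)--(6).} The map $\pi_V$ is the restriction to $\Cond_U(\vec A,V)$ of the projection onto the factor $\prod_{v\in V}A_{|v|}$, hence a homomorphism. Injectivity is immediate from the support condition (each $a_u$ is forced by $a\res V$). For surjectivity, given $b\in\prod_{v\in V}A_{|v|}$, define $a_u:=f_{|V\cdot u|,|u|}(b_{V\cdot u})$; noting $V\cdot v=v$ for $v\in V$ shows $a\res V=b$, and the identity $V\cdot(V\cdot u)=V\cdot u$ shows $a\in\Cond_U(\vec A,V)$. This gives~(4), from which~(5) follows by combining with~(3). For~(6), apply supportedness to the singleton $\{u\}$ to obtain a kernel $V\ni u$; then $\pi_u$ factors through the isomorphism $\pi_V$ followed by the surjective coordinate projection.

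\emph{Third stage: part (7).} The main point is that the formula $\pi_{\bu}(a)=f_{|V\cdot\bu|,|\bu|}(a_{V\cdot\bu})$ does not depend on the choice of kernel $V$ supporting $a$. For this, if $\supp(a)\subseteq V$, the identity $\supp(a)\cdot(V\cdot\bu)=\supp(a)\cdot\bu$ (proved exactly as the identity in stage one, replacing $u$ by $\bu$ and using that $V\cdot\bu$ is defined as the largest element of $V\cap\bu$) combined with the support condition $a_{V\cdot\bu}=f_{|\supp(a)\cdot(V\cdot\bu)|,|V\cdot\bu|}(a_{\supp(a)\cdot(V\cdot\bu)})$ and functoriality yields $f_{|V\cdot\bu|,|\bu|}(a_{V\cdot\bu})=f_{|\supp(a)\cdot\bu|,|\bu|}(a_{\supp(a)\cdot\bu})$. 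Once this well-definedness is established, the restriction of $\pi_{\bu}$ to $\Cond_U(\vec A,V)$ factors as ``project to the fixed coordinate $V\cdot\bu$, then apply the homomorphism $f_{|V\cdot\bu|,|\bu|}$'', so it is a homomorphism, and hence so is $\pi_{\bu}$ on the directed union. For surjectivity, sharpness of $\bu$ supplies a $v_0\in\bu$ with $|v_0|=|\bu|$; picking a kernel $V\ni v_0$ forces $V\cdot\bu\ge v_0$, so $|V\cdot\bu|=|\bu|$ and $f_{|V\cdot\bu|,|\bu|}=\id$, reducing to surjectivity of the coordinate projection.

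The only conceptually nontrivial point throughout is the absorption identity $V\cdot(W\cdot u)=V\cdot u$ (and its analogue for sharp ideals), which is used to glue together support conditions across different kernels; once that is isolated, everything else is direct.
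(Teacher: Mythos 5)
Your proposal is correct and follows essentially the same route as the paper's own (very terse) proof: the paper declares (1)--(3) ``immediate,'' proves (4) by the same explicit inverse construction, derives (5) and (6) from (3) and (4), and in (7) performs the same chain of equalities $\pi_{\bu}(a)=f_{|\supp(a)\cdot\bu|,|\bu|}(a_{\supp(a)\cdot\bu})=f_{|V\cdot\bu|,|\bu|}(f_{|\supp(a)\cdot\bu|,|V\cdot\bu|}(a_{\supp(a)\cdot\bu}))=f_{|V\cdot\bu|,|\bu|}(a_{V\cdot\bu})$, which implicitly relies on the absorption identity $\supp(a)\cdot(V\cdot\bu)=\supp(a)\cdot\bu$ that you isolate and prove explicitly. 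The one genuine improvement in exposition is precisely that isolation: making the absorption identity $V\cdot(W\cdot u)=V\cdot u$ (and its sharp-ideal analogue) a named fact is what lets parts (2) and (7) fall out cleanly, whereas the paper leaves the reader to reconstruct it.
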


\begin{proof}
The statements $(1)$, $(2)$, and $(3)$
are immediate. The morphism $\pi_V$ in $(4)$ is clearly one-to-one.
Let $x\in\prod_{v\in V}A_{\module{v}}$, put $a_u=f_{\module{V\cdot u},\module{u}}(x_{V\cdot u})$, for all $u\in U$.
Then $V$ is a support of $a$, and $a\res V=x$. So $\pi_V$ is an isomorphism.
The statement $(5)$ follows from $(4)$ and $(3)$. The statement~$(6)$ follows from $(4)$.

Now we verify (7). Let $\bu\in\Ids(U,\module{\cdot})$. Let $V$ be a
kernel of $U$ and let $a\in \Cond_U(\vec{A},V)$, then:
\begin{align*}
\pi_{\bu}(a)&=f_{\module{\supp(a)\cdot \bu},\module{\bu}}(a_{\supp(a)\cdot \bu})\\
&=f_{\module{V\cdot \bu},\module{\bu}}(f_{\module{\supp(a)\cdot \bu},\module {V\cdot \bu}}(a_{\supp(a)\cdot \bu}))\\
&=f_{\module{V\cdot \bu},\module{ \bu}}(a_{V\cdot \bu})
\end{align*}
This, together with $(3)$, shows that $\pi_{\bu}$ is a morphism of
algebras. Let $v\in \bu$ such that $\module{v}=\module{\bu}$, and let $V$
be a kernel of $U$ such that $v\in V$. Then $\module{V\cdot \bu}=\module{\bu}$, and $\pi_{\bu}\res\Cond_U(\vec{A},V)=f_{\module{V\cdot \bu},\module 
{ \bu}}\circ\pi_{V\cdot\bu}\res\Cond_U(\vec{A},V)=\pi_{V\cdot\bu}\res\Cond_U(\vec{A},V)$ is
surjective.
\end{proof}
We shall call the map $\pi_{\bu}$ above the \emph{canonical projection} from $\Cond(\vec A,\bu)$ to $A_{\module{\bu}}$.

\begin{proposition}\label{P:CondIsFunctor}
Let~$\cV$ be a class of algebras closed under finite products and under directed unions,
let $I$ be a poset, let $(U,\module{\cdot})$ be a norm-covering of $I$, let
$\vec{A}=(A_i,f_{i,j})_{i\le j\text{ in }I}$ and
$\vec{B}=(B_i,g_{i,j})_{i\le j\text{ in }I}$ be two objects of $\cV^I$,
and let $\vec{h}=(h_i)_{i\in I}\colon\vec{A}\to\vec{B}$ be an arrow
of $\cV^I$. Then there are morphisms of algebras:
\begin{align*}
\Cond_U(\vec{h},V)\colon\Cond_U(\vec{A},V) &\to\Cond_U(\vec{B},V)\\
(a_u)_{u\in U} & \mapsto (h_{\module{u}}(a_u))_{u\in U}, & &\text{for any kernel $V$ of $U$}
\end{align*}
and
\begin{align*}
\Cond(\vec{h},U)\colon\Cond(\vec{A},U) &\to\Cond(\vec{B},U)\\
(a_u)_{u\in U} & \mapsto (h_{\module{u}}(a_u))_{u\in U}
\end{align*}
Moreover, $\Cond(-,U)\colon\cV^I\to\cV$  is a functor.
\end{proposition}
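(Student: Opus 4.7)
The plan is to unpack the three assertions of the proposition in turn: well-definedness of $\Cond_U(\vec h,V)$, well-definedness of $\Cond(\vec h,U)$, and functoriality of $\Cond(-,U)$. The only nontrivial ingredient is naturality of $\vec h$, which I will use to show that coordinate-wise application of the family $(h_i)_{i\in I}$ sends supports to supports.

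First I would fix a kernel $V$ of $U$ and $a\in\Cond_U(\vec A,V)$, and set $b_u=h_{\module{u}}(a_u)$ for every $u\in U$. To show $V$ is a support of $b$ in $\vec B$, I compute, for each $u\in U$,
\[
b_u=h_{\module{u}}(a_u)=h_{\module{u}}\bigl(f_{\module{V\cdot u},\module{u}}(a_{V\cdot u})\bigr)
=g_{\module{V\cdot u},\module{u}}\bigl(h_{\module{V\cdot u}}(a_{V\cdot u})\bigr)
=g_{\module{V\cdot u},\module{u}}(b_{V\cdot u}),
\]
where the third equality uses the fact that $\vec h\colon\vec A\to\vec B$ is a natural transformation. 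Thus $b\in\Cond_U(\vec B,V)$. Since the underlying map is the restriction of the product morphism $\prod_u h_{\module{u}}$ to the subalgebra $\Cond_U(\vec A,V)$ (which is a subalgebra by Lemma~\ref{L:prop_de_base_cond}(1)), it is automatically a homomorphism of algebras.

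Next I would observe, from the definitions, that the family $\bigl(\Cond_U(\vec h,V)\bigr)_{V}$ is coherent: if $V\subseteq W$ are kernels of $U$, then $\Cond_U(\vec h,W)$ restricts to $\Cond_U(\vec h,V)$ on $\Cond_U(\vec A,V)$, because both maps are the restriction of $\prod_u h_{\module{u}}$. By Lemma~\ref{L:prop_de_base_cond}(3), $\Cond(\vec A,U)$ is the directed union of the $\Cond_U(\vec A,V)$ (and similarly for $\vec B$), so the $\Cond_U(\vec h,V)$s glue together to a morphism $\Cond(\vec h,U)\colon\Cond(\vec A,U)\to\Cond(\vec B,U)$ landing in the appropriate subalgebra of $\prod_{u\in U}B_{\module{u}}$; this gives the stated formula $a\mapsto(h_{\module{u}}(a_u))_{u\in U}$.

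Finally, functoriality is immediate from the coordinate-wise description. The identity natural transformation $\id_{\vec A}=(\id_{A_i})_{i\in I}$ is sent to the map $(a_u)_{u\in U}\mapsto(\id_{A_{\module{u}}}(a_u))_{u\in U}=\id_{\Cond(\vec A,U)}$, and if $\vec h\colon\vec A\to\vec B$ and $\vec k\colon\vec B\to\vec C$ are composable arrows of $\cV^I$, then $(\vec k\circ\vec h)_i=k_i\circ h_i$ gives
\[
\Cond(\vec k\circ\vec h,U)(a)=(k_{\module{u}}\circ h_{\module{u}}(a_u))_{u\in U}
=\Cond(\vec k,U)\circ\Cond(\vec h,U)(a).
\]
The only conceptual step is the naturality computation in the first paragraph; everything else is bookkeeping about directed unions of the kernel-indexed subalgebras, and I expect no real obstacle beyond keeping the notation straight.
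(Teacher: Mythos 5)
The paper omits the proof of this proposition entirely, evidently regarding it as routine; your argument supplies the expected verification and is correct. The one small point you leave implicit is that $\Cond(\vec A,U)$ actually lies in $\cV$, which is needed for $\Cond(-,U)$ to be a functor \emph{into} $\cV$; this follows from Lemma~\ref{L:prop_de_base_cond}(5) (it is a directed union of finite products of the $A_i$s) together with the closure hypotheses on $\cV$, and is worth stating explicitly for completeness.
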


\section{Liftings}\label{S:Liftings}
In this section, let $\cS$ be a class of \jzs s, closed under finite products and directed unions, let $I$ be a poset, let $\vec\kappa=(\kappa_i)_{i\in I}$ be a family of cardinal numbers, let $(U,\module{\cdot})$ be a $\vec\kappa$-compatible norm-covering of $I$, and let~$\cV$ be a class of algebras of the same similarity type.

\begin{proposition}\label{P:thetaisanideal}
Let $\vec{D}=(D_i,\phi_{i,j})_{i\le j\text{ in }I}$ be an object of $\cS^I$, let $\bu\in\Ids(U,\module{\cdot})$, let $\pi_{\bu}^{\vec{D}}\colon\Cond(\vec{D},U)\tosurj D_{\module{\bu}}$ be the canonical projection. Then the subset
\[\theta_{\bu}^{\vec{D}}=\setm{a\in \Cond(\vec{D},U)}{ \pi_{\bu}^{\vec{D}}(a)=0},\]
is an ideal of $\Cond(\vec{D},U)$, and
$\Id(\pi_{\bu}^{\vec{D}})\res \upw\theta_{\bu}^{\vec{D}}\colon\upw\theta_{\bu}^{\vec{D}}\to\Id(D_{\module{\bu}})$ is an isomorphism, where we abbreviate $(\Id\Cond(\vec{D},U))\upw\theta_{\bu}^{\vec{D}}$ by $\upw\theta_{\bu}^{\vec{D}}$.
\end{proposition}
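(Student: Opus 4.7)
The first assertion is routine: since $\pi_{\bu}^{\vec{D}}$ is a surjective $(\vee,0)$-homomorphism by Lemma~\ref{L:prop_de_base_cond}(7), its $0$-preimage $\theta_{\bu}^{\vec{D}}$ contains $0$, is closed under binary joins (as $\pi_{\bu}^{\vec{D}}$ preserves joins), and is downward closed (as $\pi_{\bu}^{\vec{D}}$ is monotone); hence $\theta_{\bu}^{\vec{D}}$ is an ideal of $\Cond(\vec{D},U)$.

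For the isomorphism, I would construct an inverse $\Psi\colon\Id(D_{\module{\bu}})\to\upw\theta_{\bu}^{\vec{D}}$ defined by $\Psi(K)=(\pi_{\bu}^{\vec{D}})^{-1}(K)$. A direct check shows each $\Psi(K)$ is an ideal of $\Cond(\vec{D},U)$ containing $\theta_{\bu}^{\vec{D}}$, and surjectivity of $\pi_{\bu}^{\vec{D}}$ yields $\Id(\pi_{\bu}^{\vec{D}})(\Psi(K))=K$. Both $\Id(\pi_{\bu}^{\vec{D}})$ and $\Psi$ are order-preserving, so the only nontrivial task is to verify $\Psi(\Id(\pi_{\bu}^{\vec{D}})(J))=J$ for every $J\in\upw\theta_{\bu}^{\vec{D}}$, which reduces to the following key implication: whenever $y\in\Cond(\vec{D},U)$ and $x\in J$ satisfy $\pi_{\bu}^{\vec{D}}(y)\leq\pi_{\bu}^{\vec{D}}(x)$, we have $y\in J$.

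To prove this implication, I would exhibit an element $z\in\theta_{\bu}^{\vec{D}}$ with $y\leq x\vee z$; since $x,z\in J$, this gives $x\vee z\in J$ and then $y\in J$ by downward closure. To construct $z$, I use sharpness of $\bu$ to pick some $w\in\bu$ with $\module{w}=\module{\bu}$, and then use supportedness of $U$ to choose a kernel $V$ containing $w$ and the supports of $x$ and $y$; by Lemma~\ref{L:prop_de_base_cond}(2), $V$ remains a common support of $x$ and $y$. Set $v_0=V\cdot\bu$. Since $w\leq v_0$ and $v_0\in\bu$, one has $\module{v_0}=\module{\bu}$, so the transition morphism $\phi_{\module{v_0},\module{\bu}}$ is the identity and $\pi_{\bu}^{\vec{D}}$ agrees with the coordinate projection $a\mapsto a_{v_0}$ on $\Cond_U(\vec{D},V)$. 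In particular, $y_{v_0}\leq x_{v_0}$. Using the isomorphism of Lemma~\ref{L:prop_de_base_cond}(4), define $z\in\Cond_U(\vec{D},V)$ by $z_v=y_v$ for $v\in V\setminus\{v_0\}$ and $z_{v_0}=0$; then $\pi_{\bu}^{\vec{D}}(z)=z_{v_0}=0$, so $z\in\theta_{\bu}^{\vec{D}}$, and the inequality $y_v\leq x_v\vee z_v$ holds pointwise on $V$ (trivially for $v\neq v_0$, and from $y_{v_0}\leq x_{v_0}$ for $v=v_0$), giving $y\leq x\vee z$ as required.

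The crux of the argument is the careful choice of the kernel $V$: enlarging a common support of $x$ and $y$ to contain an element $w\in\bu$ with $\module{w}=\module{\bu}$ ensures that the transition map $\phi_{\module{V\cdot\bu},\module{\bu}}$ is the identity, which is precisely what converts the given global inequality $\pi_{\bu}^{\vec{D}}(y)\leq\pi_{\bu}^{\vec{D}}(x)$ in $D_{\module{\bu}}$ into the coordinate-wise inequality $y_{v_0}\leq x_{v_0}$ that makes the decomposition $y\leq x\vee z$ work. Without this choice, the fibers of $\pi_{\bu}^{\vec{D}}$ need not be saturated by ideals containing $\theta_{\bu}^{\vec{D}}$.
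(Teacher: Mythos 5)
Your proof is correct, and it follows essentially the same route as the paper's: the crux is the same construction of an auxiliary element of $\theta_{\bu}^{\vec{D}}$ (your $z$, the paper's $b$) obtained by zeroing out the $V\cdot\bu$ coordinate, after enlarging the common support to a kernel $V$ containing some $w\in\bu$ with $\module{w}=\module{\bu}$ so that $\phi_{\module{V\cdot\bu},\module{\bu}}=\id$. The only cosmetic differences are that you package the argument as constructing an explicit inverse $\Psi=(\pi_{\bu}^{\vec{D}})^{-1}$ rather than verifying surjectivity and then order-embeddedness, and that you reduce the check of $y\le x\vee z$ to coordinates in $V$ via the isomorphism $\pi_V$ of Lemma~\ref{L:prop_de_base_cond}(4), whereas the paper verifies the inequality over all of $U$ directly by splitting into the cases $V\cdot w=V\cdot\bu$ and $V\cdot w\ne V\cdot\bu$.
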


\begin{proof}
The morphism $\rho_{\bu}=\Id(\pi_{\bu}^{\vec{D}})$ is surjective and $\rho_{\bu}(\theta_{\bu}^{\vec{D}})=0$, so $\rho_{\bu}\res\upw\theta_{\bu}^{\vec{D}}$ is surjective.

Fix $v\in \bu$ such that $\module{v}=\module{\bu}$. Let $L,L'\in\upw\theta_{\bu}^{\vec{D}}$ such that $\rho_{\bu}(L)\subseteq\rho_{\bu}(L')$, we must prove that $L\subseteq L'$. Let $a\in L$. As $\pi_{\bu}^{\vec{D}}(a)\in\rho_{\bu}(L')$, there exists $a'\in L'$ such that $\pi_{\bu}^{\vec{D}}(a)\le\pi_{\bu}^{\vec{D}}(a')$. Let $V$ be a common support of $a$ and $a'$ such that $v\in V$. So $\module{\bu}=\module{v}\le\module{V\cdot \bu}\le\module{\bu}$, and so $\module{V\cdot \bu}=\module{\bu}$, and hence $\phi_{\module{V\cdot \bu},\module{\bu}}=\id$. Therefore,
\[
a_{V\cdot \bu} = \phi_{\module{V\cdot \bu},\module{\bu}}(a_{V\cdot \bu}) = \pi_{\bu}^{\vec{D}}(a)\le \pi_{\bu}^{\vec{D}}(a') = \phi_{\module{V\cdot \bu},\module{\bu}}(a'_{V\cdot \bu})=a'_{V\cdot \bu}.
\]
Put:
\[
b_w=\begin{cases}
a_w &\text{If $V\cdot w\not=V\cdot \bu$}\\
0 &\text{If $V\cdot w=V\cdot \bu$}\\
\end{cases}
,\quad\text{for all $w\in U$.}
\]
The set $V$ is a support of $b$, and $\pi_{\bu}^{\vec{D}}(b)=\phi_{\module{V\cdot \bu},\module{\bu}}(b_{V\cdot\bu})=0$.

Let $w\in U$. If $V\cdot w\not=V\cdot \bu$, then $a_w=b_w\le a'_w\vee b_w$. If $V\cdot w=V\cdot \bu$, then $\module{\bu}=\module{V\cdot \bu}=\module{V\cdot w}\le\module{w}$. Thus:
\[
a'_w=\phi_{\module{V\cdot w},\module{w}}(a'_{V\cdot w})=\phi_{\module{\bu},\module{w}}(\phi_{\module{V\cdot \bu},\module{\bu}}(a'_{V\cdot \bu}))=\phi_{\module{\bu},\module{w}}(\pi_{\bu}^{\vec{D}}(a')),
\]
and, similarly, $a_w=\phi_{\module{\bu},\module{w}}(\pi_{\bu}^{\vec{D}}(a))$. As $\pi_{\bu}^{\vec{D}}(a)\le\pi_{\bu}^{\vec{D}}(a')$, we obtain that $a_w\le a'_w$. So we have proved that $a\le b\vee a'$. As $b\in\theta_{\bu}^{\vec{D}}\in L'$ and $a'\in L'$, it follows that $a\in L'$. Hence $L\subseteq L'$, and $\rho_{\bu}$ is an embedding.
\end{proof}

\begin{lemma}\label{L:proptheta}
Let $(\psi_i)_{i\in I}=\vec{\psi}\colon\vec{C}\to\vec{D}$  be an arrow of $\cS^I$, let $\bu\in\Ids(U,\module{\cdot})$. Then:
\[\psi_{\module{\bu}}\circ\pi_{\bu}^{\vec{C}}=\pi_{\bu}^{\vec{D}}\circ\Cond(\vec 
{\psi},U),\]
and
\[\Id(\Cond(\vec{\psi},U))(\theta_{\bu}^{\vec{C}})\subseteq \theta_{\bu}^{\vec {D}}.\]
\end{lemma}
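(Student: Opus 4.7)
The plan is to establish the first identity pointwise on an arbitrary element $a\in\Cond(\vec C,U)$, then deduce the ideal containment as a formal consequence. The key technical input is the naturality of $\vec\psi$, which ensures that $\Cond(\vec\psi,U)$ preserves supports so that the explicit formula for $\pi_{\bu}$ in Lemma~\ref{L:prop_de_base_cond}(7) can be applied on both sides.

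For the first identity, I would fix $a\in\Cond(\vec C,U)$, let $V$ be a kernel of $U$ containing $\supp a$ (so that $a\in\Cond_U(\vec C,V)$), and arrange, using Lemma~\ref{L:kersupp} and the fact that $\bu$ is a sharp ideal, that some $v\in\bu$ with $\module{v}=\module{\bu}$ belongs to $V$; this forces $\module{V\cdot\bu}=\module{\bu}$ and so $\phi_{\module{V\cdot\bu},\module{\bu}}^{\vec C}=\id$, and similarly on the $\vec D$ side. Then Lemma~\ref{L:prop_de_base_cond}(7) yields $\pi_{\bu}^{\vec C}(a)=a_{V\cdot\bu}$, hence $\psi_{\module{\bu}}(\pi_{\bu}^{\vec C}(a))=\psi_{\module{\bu}}(a_{V\cdot\bu})$. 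On the other hand, naturality of $\vec\psi$ gives $\psi_{\module{u}}\circ\phi_{\module{V\cdot u},\module{u}}^{\vec C}=\phi_{\module{V\cdot u},\module{u}}^{\vec D}\circ\psi_{\module{V\cdot u}}$, so $V$ is still a support of $\Cond(\vec\psi,U)(a)=(\psi_{\module{u}}(a_u))_{u\in U}$; applying Lemma~\ref{L:prop_de_base_cond}(7) on the $\vec D$ side gives $\pi_{\bu}^{\vec D}(\Cond(\vec\psi,U)(a))=\psi_{\module{V\cdot\bu}}(a_{V\cdot\bu})=\psi_{\module{\bu}}(a_{V\cdot\bu})$. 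This matches, establishing the first displayed identity.

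For the second identity, recall that $\Id(\Cond(\vec\psi,U))(\theta_{\bu}^{\vec C})$ is the ideal of $\Cond(\vec D,U)$ generated by the image $\Cond(\vec\psi,U)(\theta_{\bu}^{\vec C})$. It suffices, therefore, to show that every element of this image lies in $\theta_{\bu}^{\vec D}$, since $\theta_{\bu}^{\vec D}$ is itself an ideal by Proposition~\ref{P:thetaisanideal}. For $a\in\theta_{\bu}^{\vec C}$ we have $\pi_{\bu}^{\vec C}(a)=0$, and applying the identity just proved gives $\pi_{\bu}^{\vec D}(\Cond(\vec\psi,U)(a))=\psi_{\module{\bu}}(0)=0$ since $\psi_{\module{\bu}}$ is a $(\vee,0)$-homomorphism; hence $\Cond(\vec\psi,U)(a)\in\theta_{\bu}^{\vec D}$, completing the proof.

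I do not anticipate a genuine obstacle: the only subtlety is the bookkeeping around supports, namely making sure that $V$ can be chosen to witness $\module{V\cdot\bu}=\module{\bu}$ so that Lemma~\ref{L:prop_de_base_cond}(7) collapses the transition maps to the identity on the relevant coordinate. Everything else is an immediate consequence of naturality and the fact that $\psi_{\module\bu}$ preserves $0$.
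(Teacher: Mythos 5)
Your proof is correct and follows essentially the same route as the paper: apply the explicit formula from Lemma~\ref{L:prop_de_base_cond}(7) to both $\pi_{\bu}^{\vec C}(a)$ and $\pi_{\bu}^{\vec D}(\Cond(\vec\psi,U)(a))$ over a common support, use naturality of $\vec\psi$ to mediate between the two, and then deduce the ideal containment from the equality and the fact that $\theta_{\bu}^{\vec D}$ is an ideal. The one extra step you take---enlarging $V$ to contain some $v\in\bu$ with $\module{v}=\module{\bu}$ so that the transition maps $\phi_{\module{V\cdot\bu},\module{\bu}}$ become identities---is valid but unnecessary: the formula $\pi_{\bu}(a)=f_{\module{V\cdot\bu},\module{\bu}}(a_{V\cdot\bu})$ in Lemma~\ref{L:prop_de_base_cond}(7) holds for \emph{any} kernel $V$ with $a\in\Cond_U(\vec C,V)$, and applying the naturality square $\delta_{\module{V\cdot\bu},\module{\bu}}\circ\psi_{\module{V\cdot\bu}}=\psi_{\module{\bu}}\circ\gamma_{\module{V\cdot\bu},\module{\bu}}$ directly (as the paper does) gives the identity without any constraint on $V$ beyond being a support of $a$. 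Also note that the existence of a kernel containing $\supp a\cup\set{v}$ follows from $U$ being supported (Definition~\ref{D:KerSupp}), not from Lemma~\ref{L:kersupp}, which concerns intersections rather than enlargements.
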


\begin{proof}
Let $\vec{C}=(C_i,\gamma_{i,j})_{i\le j\text{ in }I}$, let $\vec 
{D}=(D_i,\delta_{i,j})_{i\le j\text{ in }I}$,
let $V$ be a kernel of $U$, and let $a\in\Cond_U(\vec{C},V)$. By Proposition \ref{P:CondIsFunctor}, $V$ is also a support of $\Cond(\vec{\psi},U)(a)$, and
\begin{align*}
\pi_{\bu}^{\vec{D}}(\Cond(\vec{\psi},U)(a))
&=\delta_{\module{V\cdot \bu},\module{\bu}}((\Cond(\vec{\psi},U)(a))_{V\cdot \bu})\\
&=\delta_{\module{V\cdot \bu},\module{\bu}}( \psi_{\module{V\cdot \bu}}(a_{V \cdot \bu}))\\
&=\psi_{\module{\bu}}(\gamma_{\module{V\cdot \bu},\module{\bu}}(a_{V\cdot  \bu}))\\
&=\psi_{\module{\bu}}(\pi_{\bu}^{\vec{D}}(a))
\end{align*}

The containment is an obvious consequence of the equality.
\end{proof}

\begin{definition}\label{D:quasilift}
Let $\vec{D}$ be an object of $\cS^I$. An \emph{$U$-quasi-lifting} of $\vec{D}$ in~$\cV$ is a pair $(\tau,T)$, where $T\in\cV$ and $\tau\colon\Conc T\to \Cond (\vec{D},U)$ is a $(\vee,0)$-homomorphism such that $\upw\alpha_{\bu}\to\upw\theta_{\bu}^{\vec{D}}$, $\beta\mapsto\Id(\tau)(\beta)\vee\theta_{\bu}^{\vec{D}}$ is an isomorphism, for all ${\bu}\in \Ide(U,\module{\cdot})$, where $\alpha_{\bu}=\bigvee\setm{\beta\in\Conc T}{\tau(\beta)\in\theta_{\bu}^{\vec{D}}}$.
\end{definition}

Observe that in the definition above we use the identification of $\Con T$ with the ideal lattice of $\Conc T$. We shall now extend Definition~\ref{D:quasilift} from \emph{objects} of $\cS^I$ to \emph{diagrams} of $\cS^I$.

\begin{definition}
Let~$J$ be a category and let $\cD\colon J\to\cS^I$ be a functor. An \emph{$U$-quasi-lifting} of~$\cD$ in~$\cV$ is a pair $(\vec{\tau},\cJ)$, where $\cJ\colon J\to \cV$ is a functor and $\vec{\tau}=(\tau^j)_{j\in\Ob J}\colon\Conc\circ\cJ\to\Cond(\cD(-),U)$ is a natural transformation, such that $(\tau^j,\cJ(j))$ is a $U$-quasi-lifting of $\cD(j)$ for all $j\in\Ob J$.
\end{definition}

The two following lemmas are obvious.

\begin{lemma}
Let $\vec{D}$ be an object of $\cS^I$, let $T\in\cV$, and let $\tau\colon\Conc T \to \Cond(\vec{D},U)$ be an isomorphism. Then $(\tau,T)$ is a $U$-quasi-lifting of $\vec{D}$.
\end{lemma}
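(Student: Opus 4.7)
The plan is to derive the desired isomorphism directly from the observation that a \jz-isomorphism $\tau$ induces an isomorphism of the entire ideal lattices, under which $\alpha_{\bu}$ and $\theta_{\bu}^{\vec{D}}$ correspond; once this correspondence is in place, the map in Definition~\ref{D:quasilift} collapses to a mere restriction of that lattice isomorphism.

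First I would note that, since $\tau\colon\Conc T\to\Cond(\vec{D},U)$ is an isomorphism of \jzs s, the induced map $\Id\tau$ is a complete lattice isomorphism from $\Id\Conc T$ onto $\Id\Cond(\vec{D},U)$. Indeed, for any ideal $\alpha$ of $\Conc T$ the direct image $\tau(\alpha)$ is already an ideal of $\Cond(\vec{D},U)$ (being the image of a downward-closed, join-closed subset under an order-isomorphism that preserves finite joins), so $\Id\tau(\alpha)=\tau(\alpha)$, and the map $\Id(\tau^{-1})$ is an explicit inverse. Under the standard identification $\Con T=\Id\Conc T$, this gives an isomorphism $\Con T\to\Id\Cond(\vec{D},U)$.

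Next I would verify that $\Id\tau(\alpha_{\bu})=\theta_{\bu}^{\vec{D}}$. The set $S=\setm{\beta\in\Conc T}{\tau(\beta)\in\theta_{\bu}^{\vec{D}}}$ is downward closed in $\Conc T$ (because $\tau$ is order-preserving and $\theta_{\bu}^{\vec{D}}$ is a lower set) and closed under finite joins (because $\tau$ preserves finite joins and $\theta_{\bu}^{\vec{D}}$ is join-closed), so $S$ is itself an ideal of $\Conc T$. Consequently $\alpha_{\bu}=\bigvee S=S$ in $\Id\Conc T$, and $\tau(S)=\theta_{\bu}^{\vec{D}}$ by the surjectivity of $\tau$, whence $\Id\tau(\alpha_{\bu})=\theta_{\bu}^{\vec{D}}$.

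Finally, for any $\beta\in\upw\alpha_{\bu}$, monotonicity of $\Id\tau$ gives $\Id\tau(\beta)\supseteq\Id\tau(\alpha_{\bu})=\theta_{\bu}^{\vec{D}}$, so $\Id\tau(\beta)\vee\theta_{\bu}^{\vec{D}}=\Id\tau(\beta)$. Hence the map $\beta\mapsto\Id\tau(\beta)\vee\theta_{\bu}^{\vec{D}}$ appearing in Definition~\ref{D:quasilift} is simply the restriction of the lattice isomorphism $\Id\tau$ to $\upw\alpha_{\bu}$, and since $\Id\tau$ carries $\alpha_{\bu}$ to $\theta_{\bu}^{\vec{D}}$, this restriction is an order-isomorphism onto $\upw\theta_{\bu}^{\vec{D}}$. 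No step presents a real obstacle; the content of the lemma is entirely that a \jz-isomorphism lifts to an isomorphism of ideal lattices that respects upper sets, which is why the authors describe it as obvious.
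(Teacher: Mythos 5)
Your proof is correct, and it fills in the same reasoning the paper implicitly has in mind (the paper declares this lemma "obvious" and gives no proof). The key observations—that $\Id\tau$ is a lattice isomorphism carrying $\alpha_{\bu}$ to $\theta_{\bu}^{\vec{D}}$, so the join with $\theta_{\bu}^{\vec{D}}$ is absorbed on $\upw\alpha_{\bu}$ and the map collapses to a restriction of $\Id\tau$—are exactly what make the statement immediate.
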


\begin{lemma}
Let~$J$ be a category, let $\cD\colon J\to\cS^I$ be a functor, let $\cJ\colon J\to \cV$ be a functor, and let $\tau=(\tau^j)_{j\in\Ob J}\colon\Conc\circ\cJ\to \Cond(\cD(-),U)$ be a natural isomorphism. Then $(\tau,\cJ)$ is an $U$-quasi-lifting of~$\cD$.
\end{lemma}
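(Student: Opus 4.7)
The plan is to reduce this statement to the preceding single-object lemma by working componentwise in $j\in\Ob J$. First I would unpack the assumption: that $\tau$ is a natural isomorphism (in the functor category from $J$ into the target) means precisely that each component $\tau^j\colon\Conc\cJ(j)\to\Cond(\cD(j),U)$ is an isomorphism of $(\vee,0)$-semilattices. The naturality squares, although part of the data, play no role in the verification of the $U$-quasi-lifting condition, which is pointwise in $j$.

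Then, for each fixed $j\in\Ob J$, I would apply the previous lemma with $T=\cJ(j)$ and $\vec D=\cD(j)$ to the isomorphism $\tau^j\colon\Conc\cJ(j)\to\Cond(\cD(j),U)$. That lemma yields directly that $(\tau^j,\cJ(j))$ is a $U$-quasi-lifting of $\cD(j)$; that is, for every $\bu\in\Ide(U,\module{\cdot})$, setting $\alpha_{\bu}=\bigvee\setm{\beta\in\Conc\cJ(j)}{\tau^j(\beta)\in\theta_{\bu}^{\cD(j)}}$, the map $\beta\mapsto\Id(\tau^j)(\beta)\vee\theta_{\bu}^{\cD(j)}$ is an isomorphism from $\upw\alpha_{\bu}$ onto $\upw\theta_{\bu}^{\cD(j)}$.

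Since this holds for every $j\in\Ob J$, the pair $(\vec\tau,\cJ)$ satisfies the defining clause of a $U$-quasi-lifting of the diagram $\cD$, and the proof is complete. No genuine obstacle arises: the argument is purely formal, relying only on the standard fact that a natural isomorphism has isomorphism components, together with the preceding single-object lemma. In effect, this result just promotes the previous lemma from single objects of $\cS^I$ to $J$-indexed diagrams of such objects, which is why the authors classify it as obvious.
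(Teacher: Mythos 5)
Your proof is correct and matches the paper's intent: the paper declares this lemma ``obvious'' and gives no proof, and the argument you give is exactly the expected one---$\tau$ being a natural isomorphism is in particular a natural transformation (the first clause of the diagram-level definition), and each component $\tau^j$ is an isomorphism, so the preceding single-object lemma gives that $(\tau^j,\cJ(j))$ is a $U$-quasi-lifting of $\cD(j)$ (the second clause).
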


The following lemma expresses a commutation property between the condensate functor $\Cond$ and the $\Conc$ functor.

\begin{lemma}\label{L:liftingimplyquasilifting}
Let $\vec{A}=(A_i,f_{i,j})_{i\le j\text{ in }I}$ be an object of $\cV^I$,
let $\vec{D}=\Conc\vec{A}=(\Conc A_i,\Conc f_{i,j})_{i\le j\text{ in }I}$, let $p_{\bu}\colon\Cond(\vec{A},U)\tosurj A_{\module{{\bu}}}$ be the canonical projection, for all ${\bu}\in\Ids(U,\module{\cdot})$. Put:
\begin{align*}
\tau\colon\Conc \Cond(\vec{A},U) &\to\Cond(\vec{D},U) \\
\beta&\mapsto ((\Conc p_{v})(\beta))_{{v}\in U}
\end{align*}
Then $(\tau,\Cond(\vec{A},U))$ is an $U$-quasi-lifting of $\vec{D}$.
\end{lemma}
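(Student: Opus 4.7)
Write $T=\Cond(\vec{A},U)$. The verification splits into two parts: (A) $\tau$ is a well-defined $(\vee,0)$-homomorphism landing in $\Cond(\vec{D},U)$, and (B) the isomorphism condition of Definition~\ref{D:quasilift} holds at each extreme ideal.

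For (A), fix $\beta\in\Conc T$ and write $\beta=\Theta_T(P)$ for a finite $P\subseteq T^2$. By Lemma~\ref{L:prop_de_base_cond}(3), the finitely many entries of $P$ all lie in $\Cond_U(\vec{A},V)$ for some kernel $V$ of $U$. Since $p_w(a)=a_w$ for every $w\in U$ (Lemma~\ref{L:prop_de_base_cond}(6)) and $V$ is a support of every $a\in\Cond_U(\vec{A},V)$, the identity $p_u=f_{\module{V\cdot u},\module{u}}\circ p_{V\cdot u}$ holds on $\Cond_U(\vec{A},V)$ for each $u\in U$. Applying the functor $\Conc$ yields
\[
(\Conc p_u)(\beta)=(\Conc f_{\module{V\cdot u},\module{u}})\bigl((\Conc p_{V\cdot u})(\beta)\bigr)\quad\text{for every }u\in U,
\]
which is exactly the statement that $V$ is a support of $\tau(\beta)$. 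Hence $\tau(\beta)\in\Cond(\vec{D},U)$. Preservation of $\vee$ and $0$ is automatic since the structure of $\Cond(\vec{D},U)$ is componentwise and each $\Conc p_v$ is a $(\vee,0)$-homomorphism.

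For (B), fix $\bu\in\Ide(U,\module{\cdot})$. For each $\beta\in\Conc T$, choose a common kernel-support $V$ as in (A). Lemma~\ref{L:prop_de_base_cond}(7) applied to $\vec{D}$ gives $\pi^{\vec{D}}_{\bu}(\tau(\beta))=(\Conc f_{\module{V\cdot\bu},\module{\bu}})((\Conc p_{V\cdot\bu})(\beta))$, while the same lemma applied to $\vec{A}$ gives $p_\bu=f_{\module{V\cdot\bu},\module{\bu}}\circ p_{V\cdot\bu}$ on $\Cond_U(\vec{A},V)$, so the right-hand side simplifies to $(\Conc p_\bu)(\beta)$. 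Therefore $\pi^{\vec{D}}_{\bu}\circ\tau=\Conc p_\bu$ on $\Conc T$. In particular $\alpha_\bu$, viewed as a congruence of $T$, coincides with $\ker p_\bu$. Since $p_\bu$ is surjective (Lemma~\ref{L:prop_de_base_cond}(7)), the congruence correspondence yields an isomorphism $\Phi\colon\upw\alpha_\bu\to\Con A_{\module{\bu}}$, $L\mapsto(\Con p_\bu)(L)$, while Proposition~\ref{P:thetaisanideal} gives an isomorphism $\Psi\colon\upw\theta^{\vec{D}}_\bu\to\Id D_{\module{\bu}}=\Con A_{\module{\bu}}$ via $K\mapsto\Id(\pi^{\vec{D}}_\bu)(K)$. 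Post-composing $L\mapsto\Id(\tau)(L)\vee\theta^{\vec{D}}_\bu$ with $\Psi$ gives $L\mapsto\Id(\pi^{\vec{D}}_\bu\circ\tau)(L)=\Id(\Conc p_\bu)(L)=\Phi(L)$. Hence $L\mapsto\Id(\tau)(L)\vee\theta^{\vec{D}}_\bu$ equals $\Psi^{-1}\circ\Phi$, an isomorphism.

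The main obstacle is part (A): one must exploit the finite generation of compact congruences together with the supportedness of $U$ to promote the pointwise identity $p_u=f_{\module{V\cdot u},\module{u}}\circ p_{V\cdot u}$, which is valid only on the finite-support subalgebra $\Cond_U(\vec{A},V)$, to an identity of $\Conc$-images at $\beta$. This is what ensures the common kernel-support $V$ of the generators of $\beta$ descends to a kernel-support of the tuple $\tau(\beta)$. Once the key identity $\pi^{\vec{D}}_\bu\circ\tau=\Conc p_\bu$ is in hand, part (B) is a purely formal combination of the congruence correspondence and Proposition~\ref{P:thetaisanideal}.
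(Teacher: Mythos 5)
Your proposal is correct and follows essentially the same route as the paper: establish that a common support $V$ of the finitely many generating pairs of $\beta$ is also a support of $\tau(\beta)$, derive the key identity $\pi^{\vec{D}}_{\bu}\circ\tau=\Conc p_\bu$, identify $\alpha_\bu$ with $\ker p_\bu$, and deduce the isomorphism condition by factoring through $\Con A_{\module{\bu}}$ via Proposition~\ref{P:thetaisanideal} and the congruence correspondence for the surjection $p_\bu$. The only cosmetic remark is that in part (A) the phrase ``applying the functor $\Conc$'' to the identity $p_u=f_{\module{V\cdot u},\module{u}}\circ p_{V\cdot u}$ on the subalgebra $\Cond_U(\vec{A},V)$ needs a sentence to pass through the inclusion $\Cond_U(\vec{A},V)\toinj\Cond(\vec{A},U)$; the paper sidesteps this by computing directly with principal congruences $\Theta_T(x,y)$, which is what your argument unwinds to anyway.
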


\begin{proof}
Denote by $\pi_{\bu}\colon\Cond(\vec{D},U)\to D_{\module{{\bu}}}$ the canonical projection, and put $\theta_{\bu}=\theta_{\bu}^{\vec{D}}$ for all $\bu\in\Ids(U,\module{\cdot})$. Let $x,y\in\Cond(\vec{A},U)$ and put $\beta=\Theta_{\Cond(\vec{A},U)}(x,y)$. Then $\tau(\beta)=(\Theta_{A_{\module{u}}} (x_u,y_u))_{u\in U}$. Let $V$ be a common support of $x$ and $y$. For every $u\in U$,
\begin{align*}
\Theta_{A_{\module{u}}} (x_u,y_u)&=\Theta_{A_{\module{u}}}(f_{\module{V\cdot u},\module{u}}(x_{V\cdot u}),f_{\module{V\cdot u},\module{u}}(y_{V\cdot u}))\\
&=\Conc(f_{\module{V\cdot u},\module{u}})(\Theta_{A_{\module{V\cdot u}}}(x_{V\cdot u},y_{V\cdot u}) )
\end{align*}
So $V$ is a support of $\tau(\beta)$. It follows that $\tau$ takes, indeed, its values in $\Cond(\vec{D},U)$. Furthermore, for $x$, $y$, $V$, and $\beta$ as above,
\begin{align*}
\pi_{\bu}(\tau(\beta))&=\Conc(f_{\module{V\cdot\bu},\module{\bu}})(\tau(\beta)_{V\cdot\bu})\\
&=\Conc(f_{\module{V\cdot\bu},\module{\bu}})(\Theta_{A_{\module{V\cdot \bu}}}(x_{V\cdot \bu},y_{V\cdot \bu}) )\\
&=\Theta_{A_{\module{\bu}}}(p_{\bu}(x),p_{\bu}(y)),
\end{align*}
so $\pi_{\bu}\circ\tau=\Conc p_{\bu}$ for all $\bu\in\Ids(U,\module{\cdot})$.

Let ${\bu}\in\Ids(U,\module{\cdot})$ and put $\alpha_{\bu}=\bigvee\setm{\beta\in\Conc(\Cond(\vec{A},U))}{\tau(\beta)\in\theta_{\bu}}$. The following equivalences hold, for every $\beta\in\Conc\Cond(\vec{A},U)$:
\begin{align*}
\beta\subseteq\ker p_{\bu} &\Longleftrightarrow \Con_c(p_{\bu})(\beta)=0\\
&\Longleftrightarrow\pi_{\bu}\circ \tau(\beta)=0\\
&\Longleftrightarrow\tau(\beta)\in\theta_{\bu}\\
&\Longleftrightarrow\beta\subseteq\alpha_{\bu},
\end{align*}
thus $\alpha_{\bu}=\ker p_{\bu}$. Let $\tau_{\bu}\colon\upw\alpha_{\bu}\to\upw\theta_{\bu}$ be the map defined by $\tau_{\bu}(\beta)=(\Id\tau)(\beta)\vee\theta_{\bu}$ for all $\beta\in\Con\Cond(\vec{A},U)$ containing $\alpha_{\bu}$. As $(\Id\pi_{\bu})(\theta_{\bu})=0$, the following diagram is commutative:
\[
\xymatrix{
\upw\theta_{\bu}  \ar[rrd]^{\Id(\pi_{\bu})}&\\
\upw\alpha_{\bu} \ar[u]_{\tau_{\bu}} \ar[rr]_-{\Con(p_{\bu})} & & \Con(A_{\module{{\bu}}})
}
\]
As both $(\Id\pi_{\bu})\res\upw\theta_{\bu}$ and $(\Con p_{\bu})\res\upw\alpha_{\bu}$ are isomorphisms, so is $\tau_{\bu}$.
\end{proof}

\begin{lemma}\label{L:liftingimplyquasiliftingext}
Let~$J$ be a category, let $\cA\colon J\rightarrow\cV^I$ be a functor, put $\cD=\Conc\circ\cA$. Let $\tau^j\colon\Conc(\Cond(\cA(j),U))\to\Cond(\Conc(\cA(j)),U)$ be the maps defined in Lemma~\textup{\ref{L:liftingimplyquasilifting}}, for all $j\in J$. Let $\tau=(\tau^j)_{j\in\Ob J}$. Then $(\tau,\Cond(\cA(-),U))$ is a $U$-quasi-lifting of~$\cD$.
\end{lemma}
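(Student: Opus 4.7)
The plan is to reduce the lemma to two verifications: (i) that each component $(\tau^j, \Cond(\cA(j), U))$ is a $U$-quasi-lifting of $\cD(j)$, which is exactly Lemma~\ref{L:liftingimplyquasilifting} applied pointwise, and (ii) that the family $\tau = (\tau^j)_{j\in\Ob J}$ is a natural transformation $\Conc\circ\Cond(\cA(-), U)\to\Cond(\cD(-), U)$. The functoriality of $\Cond(\cA(-), U)\colon J\to\cV$ is automatic, since it is the composite of $\cA\colon J\to\cV^I$ with the functor $\Cond(-, U)\colon\cV^I\to\cV$ of Proposition~\ref{P:CondIsFunctor}. So the whole content of the proof lies in checking naturality of $\tau$.

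For this, I would fix an arrow $f\colon j\to k$ of $J$ and set out to verify the square
\[
\Cond(\cD(f), U)\circ \tau^j = \tau^k\circ \Conc\bigl(\Cond(\cA(f), U)\bigr).
\]
Writing $p_v^j\colon \Cond(\cA(j), U)\tosurj \cA(j)_{\module{v}}$ for the canonical projection ($v\in U$, $j\in\Ob J$), the key intermediate identity I would establish is the algebra-level naturality
\[
\cA(f)_{\module{v}}\circ p_v^j \;=\; p_v^k\circ\Cond(\cA(f), U),
\]
which follows immediately from the componentwise formula $(\Cond(\cA(f), U)(a))_v = \cA(f)_{\module{v}}(a_v)$ given in Proposition~\ref{P:CondIsFunctor}. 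Applying the functor $\Conc$ then yields
\[
\Conc(\cA(f)_{\module{v}})\circ \Conc(p_v^j) \;=\; \Conc(p_v^k)\circ\Conc\bigl(\Cond(\cA(f), U)\bigr).
\]

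With this in hand, I would unpack the two sides of the naturality square on a generic $\beta\in\Conc(\Cond(\cA(j), U))$. By Lemma~\ref{L:liftingimplyquasilifting}, $\tau^j(\beta) = ((\Conc p_v^j)(\beta))_{v\in U}$, and the arrow $\Cond(\cD(f), U)$ acts componentwise by $\cD(f)_{\module{v}} = \Conc(\cA(f)_{\module{v}})$, so the left-hand side becomes $\bigl((\Conc \cA(f)_{\module{v}}\circ\Conc p_v^j)(\beta)\bigr)_{v\in U}$. The right-hand side, by the definition of $\tau^k$, is $\bigl((\Conc p_v^k\circ \Conc(\Cond(\cA(f), U)))(\beta)\bigr)_{v\in U}$. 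The two coincide coordinatewise by the displayed identity, establishing naturality.

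I do not expect any serious obstacle here; the whole argument is a bookkeeping unwinding of definitions on top of Lemma~\ref{L:liftingimplyquasilifting} and Proposition~\ref{P:CondIsFunctor}. The only place where care is required is the interplay between the two levels of functoriality (the condensate functor applied to the $I$-diagram $\cA(j)$, and the derived functor on the indexing category $J$), and ensuring that the values of $\tau^j(\beta)$, which a priori live in $\prod_{v\in U}\Conc(\cA(j)_{\module{v}})$, are indeed in $\Cond(\cD(j), U)$ with the correct support behavior. Both issues are already handled inside the proof of Lemma~\ref{L:liftingimplyquasilifting}, so here they propagate through the naturality check without incident.
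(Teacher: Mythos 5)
Your proposal is correct and takes essentially the same approach as the paper: both reduce the lemma to (i) Lemma~\ref{L:liftingimplyquasilifting} applied to each $j$, and (ii) the naturality square for $\tau$, which in turn follows from the componentwise description of $\Cond(-,U)$ in Proposition~\ref{P:CondIsFunctor}. The only difference is presentational — the paper verifies the square by chasing a principal congruence $\Theta_{\Cond(\cA(j),U)}(x,y)$ through both sides, whereas you first record the algebra-level identity $\cA(f)_{\module{v}}\circ p_v^j = p_v^k\circ\Cond(\cA(f),U)$ and then apply the functor $\Conc$, which is a slightly cleaner way of saying the same thing.
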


\begin{proof}
By Lemma~\ref{L:liftingimplyquasilifting}, $(\tau^j,\Cond(\cA(j),U))$ is a $U$-quasi-lifting of~$\cD(j)$, for all $j\in\Ob J$.

Let $\cA(j)=(A^j_i,t_{i,i'}^j)_{i\le i'\text{ in }I}$, for all $j\in\Ob J$. Let $f\colon j\to k$ be an arrow of~$J$, let $\cA(f)=(a^f_i)_{i\in I}$, let $p_u^k\colon\Cond(\cA(k),U)\tosurj A_{\module{u}}^k$ be the canonical projection, for all $u\in U$. Let $x,y\in\Cond(\cA(j),U)$. Then:
\begin{align*}
&\Cond\big(\Conc\cA(f),U\big)\Big(\tau^j\big(\Theta_{\Cond(\cA(j),U)}(x,y)\big)\Big)\\
&=\Cond\big(\Conc\cA(f),U\big)\Big( \big(\Theta_{A_{\module{u}}^j}(x_u,y_u)\big)_{u\in U} \Big)\\
&=\Big( \Conc(a^f_{\module{u}}) \big(\Theta_{A_{\module{u}}^j}(x_u,y_u)\big) \Big)_{u\in U}\\
&=\Big( \Theta_{A_{\module{u}}^k}\big(a^f_{\module{u}}(x_u),a^f_{\module{u}}(y_u)\big) \Big)_{u\in U}\\
&=\Big( \Theta_{A_{\module{u}}^k}\big(p_u^k(\Cond(\cA(f),U)(x)),p_u^k(\Cond(\cA(f),U)(y))\big)\Big)_{u\in U}\\
&=\Big( \Conc(p_u^k)\big(\Theta_{\Cond(\cA(k),U)}\big(\Cond(\cA(f),U)(x),\Cond(\cA(f),U)(y)\big)\big)\Big)_{u\in U}\\
&=\tau^k\Big( \Theta_{\Cond(\cA(k),U)}\big(\Cond(\cA(f),U)(x),\Cond(\cA(f),U)(y)\big)\Big)\\
&=\tau^k\Big(  \Conc\big(\Cond(\cA(f),U)\big) \big(\Theta_{\Cond(\cA(j),U)}(x,y)\big)\Big)
\end{align*}
So the following diagram is commutative:
\[ 
\begin{CD}
\Cond(\Conc\cA(j),U)  @>\Cond(\Conc\cA(f),U)>>   \Cond(\Conc\cA(k),U)\\
@A\tau^jAA                            @AA\tau^kA\\
\Conc\Cond(\cA(j),U) @>\Conc\Cond(\cA(f),U)>>   \Conc\Cond(\cA(k),U) 
\end{CD}
\]
This concludes the proof.
\end{proof}

\begin{theorem}\label{T:mainlifting}
Let~$J$ be a small category, suppose that~$\cV$ is closed under homomorphic images, and is $\Big(\Ide(U,\module{\cdot})^=,J,(\kappa_{\module{\bu}})_{\bu\in\Ide(U,\module{\cdot})^=}\Big)$-L\"owenheim-Skolem. Let $\cD\colon J\to\cS^I$ be a functor, let $(\vec{\tau},\cA)$ be a $U$-quasi-lifting of~$\cD$ in~$\cV$. Let $\cD(j)=\vec{D}^j=\Big(D_i^j,\phi_{i,i'}^j\Big)_{i\le i'\text{ in }I}$, for all $j\in\Ob J$, let $\cD(f)=\vec{\psi}^f=(\psi_i^f)_{i\in I}$, for all $f\in\Mor J$. If $\sum_{j\in \Ob J}\card{D_i^j}<\kappa_i$, for all $i\in I$, then there exists a lifting in~$\cV$ of the diagram $\widehat{\cD}\colon I\times J\to\cS$, associated to~$\cD$ \textup(cf. Section~\textup{\ref{S:Basic}}\textup).
\end{theorem}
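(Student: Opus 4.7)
The plan is to combine the L\"owenheim-Skolem hypothesis on~$\cV$ with the $\vec\kappa$-compatibility of $(U,\module{\cdot})$ to extract, from the global algebras $\cA(j)$ and the congruences $\alpha_\bu^j$ of the quasi-lifting, a coherent family of small quotients indexed by $I\times J$. To start, I unpack the quasi-lifting: for each $j\in\Ob J$ and $\bu\in\Ide(U,\module{\cdot})$, the defining isomorphism $\upw\alpha_\bu^j\cong\upw\theta_\bu^{\cD(j)}$ followed by the one in Proposition~\ref{P:thetaisanideal} identifies $\upw\alpha_\bu^j$ with $\Id(D_{\module{\bu}}^j)$; passing to compact parts gives $\Conc(\cA(j)/\alpha_\bu^j)\cong D_{\module{\bu}}^j$. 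The cardinality assumption $\sum_j\card D_i^j<\kappa_i$ then implies $\sum_{j\in\Ob J}\card\Conc(\cA(j)/\alpha_\bu^j)<\kappa_{\module{\bu}}$, and the L\"owenheim-Skolem assumption yields a family $(B_\bu^j)_{\bu\in\Ide(U,\module{\cdot})^=}^{j\in\Ob J}$ satisfying Definition~\ref{D:LS}; in particular, $B_\bu^j/\alpha_\bu^j\in\cV$, $\Conc(q_\bu^j)$ is an isomorphism for the canonical embedding $q_\bu^j\colon B_\bu^j/\alpha_\bu^j\toinj\cA(j)/\alpha_\bu^j$, and $\sum_j\card B_\bu^j<\kappa_{\module{\bu}}$.

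Next I feed the $B_\bu^j$ into the compatibility hypothesis via the order-preserving map
\[
F(\bu)=\bigcup_{j\in\Ob J}\bigcup_{x,y\in B_\bu^j}\supp\bigl(\tau^j(\Theta_{\cA(j)}(x,y))\bigr),\quad\bu\in\Ide(U,\module{\cdot}),
\]
whose values satisfy $\card F(\bu)<\kappa_{\module{\bu}}$ on $\Ide(U,\module{\cdot})^=$ by the finiteness of supports and Definition~\ref{D:LS}(6). The $\vec\kappa$-compatibility then produces an order-preserving $\sigma\colon I\to\Ide(U,\module{\cdot})$ with $\module{\sigma(i)}=i$ and $F(\sigma(i))\cap\sigma(i')\subseteq\sigma(i)$ for all $i\le i'$ in~$I$.

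The candidate lifting is $\widehat\cA(i,j)=B_{\sigma(i)}^j/\alpha_{\sigma(i)}^j$, with arrows induced by $\cA(f)$ through the containments $\cA(f)(B_{\sigma(i)}^j)\subseteq B_{\sigma(i)}^k\subseteq B_{\sigma(i')}^k$ coming from Definition~\ref{D:LS}(3),(4) and the order-preservation of~$\sigma$. The main obstacle is to show that these restrictions descend to the quotients, i.e., that $\Conc\cA(f)(\beta)\le\alpha_{\sigma(i')}^k$ for every compact $\beta=\Theta_{\cA(j)}(x,y)$ with $x,y\in B_{\sigma(i)}^j$ and $\beta\le\alpha_{\sigma(i)}^j$. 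For such $\beta$, one has $\tau^j(\beta)\in\theta_{\sigma(i)}^{\cD(j)}$ and $V:=\supp(\tau^j(\beta))\subseteq F(\sigma(i))$ by construction of~$F$. The observation after Definition~\ref{D:normcovering} forces $V\cdot\sigma(i)=V\cdot\sigma(i')$, whence the formula for $\pi_\bu^{\cD(j)}$ from Lemma~\ref{L:prop_de_base_cond}(7) yields $\pi_{\sigma(i')}^{\cD(j)}(\tau^j(\beta))=\phi_{i,i'}^j\bigl(\pi_{\sigma(i)}^{\cD(j)}(\tau^j(\beta))\bigr)=0$, upgrading $\tau^j(\beta)\in\theta_{\sigma(i)}^{\cD(j)}$ to $\tau^j(\beta)\in\theta_{\sigma(i')}^{\cD(j)}$. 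The naturality of~$\vec\tau$ together with Lemma~\ref{L:proptheta} then gives $\tau^k(\Conc\cA(f)(\beta))\in\theta_{\sigma(i')}^{\cD(k)}$, hence the required inequality; functoriality of~$\widehat\cA$ is automatic once well-definedness is settled.

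Finally, for each $(i,j)$, composing $\Conc(q_{\sigma(i)}^j)$ with the compact parts of the quasi-lifting isomorphism $\upw\alpha_{\sigma(i)}^j\cong\upw\theta_{\sigma(i)}^{\cD(j)}$ and of the Proposition~\ref{P:thetaisanideal} isomorphism $\upw\theta_{\sigma(i)}^{\cD(j)}\cong\Id(D_i^j)$ produces an isomorphism $\eta_{i,j}\colon\Conc\widehat\cA(i,j)\to D_i^j$. Naturality in~$j$ is inherited from the natural transformation~$\vec\tau$; naturality in~$i$ reuses the support-coincidence $V\cdot\sigma(i)=V\cdot\sigma(i')$ to match $\phi_{i,i'}^j$ on the semilattice side with the inclusion $B_{\sigma(i)}^j\subseteq B_{\sigma(i')}^j$ on the algebra side. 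The family $(\eta_{i,j})$ is then the desired natural isomorphism $\Conc\circ\widehat\cA\cong\widehat\cD$.
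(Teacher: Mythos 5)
Your proposal follows essentially the same route as the paper: unpack the quasi-lifting to identify $\Conc(\cA(j)/\alpha_\bu^j)$ with $D_{\module\bu}^j$, apply the L\"owenheim-Skolem hypothesis, feed the resulting supports into the $\vec\kappa$-compatibility to obtain~$\sigma$, quotient $B_{\sigma(i)}^j$ by $\alpha_{\sigma(i)}^j$, and use the coincidence $V\cdot\sigma(i)=V\cdot\sigma(i')$ plus Lemma~\ref{L:prop_de_base_cond}(7) to get both well-definedness of the morphisms and naturality. The key computation (showing $\pi_{\sigma(i')}^{\cD(j)}\circ\tau^j=\phi_{i,i'}^j\circ\pi_{\sigma(i)}^{\cD(j)}\circ\tau^j$ on the congruences generated from $B_{\sigma(i)}^j$) is exactly the paper's equation~\eqref{E:MainThEq1}, and your combination with Lemma~\ref{L:proptheta} and naturality of $\vec\tau$ to transfer membership in $\theta_{\sigma(i')}^{\cD(j)}$ into $\theta_{\sigma(i')}^{\cD(k)}$ mirrors the paper's treatment of the map $\tilde f_i$.

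One small gap: the L\"owenheim-Skolem hypothesis only supplies $B_\bu^j$ for $\bu\in\Ide(U,\module{\cdot})^=$, yet your $F$ needs to be defined on all of $\Ide(U,\module{\cdot})$ (the definition of $\vec\kappa$-compatibility requires that, even though the cardinality bound only applies to non-maximal~$\bu$). You should extend the family by setting $B_\bu^j=\cA(j)$ for maximal~$\bu$, as the paper does, so that $F(\bu)$ is defined everywhere and remains order-preserving. With that fix, and a more explicit spelling out of the naturality check for $\eta_{i,j}$ (which in the paper involves chasing the diagrams~\eqref{CD:MainThCD1} through \eqref{CD:MainThCD4}), your argument matches the original proof.
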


\begin{proof}
Let $\theta_{\bu}^j=\theta_{\bu}^{\cD(j)}$ as defined in Proposition~\ref{P:thetaisanideal}, let $\alpha_{\bu}^j=\bigvee\setm{\beta\in\Conc\cA(j)}{\tau^j(\beta)\le\theta_{\bu}^j}$, let $\tau_{\bu}^j\colon\upw\alpha_{\bu}^j\to\upw\theta_{\bu}^j$, $\beta\mapsto\Id(\tau^j)(\beta)\vee\theta_{\bu}^j$, as in Definition~\ref{D:quasilift}, let $p_{\bu}^j\colon\cA(j)\tosurj\cA(j)/\alpha_{\bu}^j$ the canonical projection, and let $\pi_{\bu}^j\colon\Cond(\cD(j),U)\tosurj D_{\module{{\bu}}}^j$, the canonical projection as defined in Lemma~\ref{L:prop_de_base_cond}(7), for all ${\bu}\in\Ide(U,\module{\cdot})$ and all $j\in\Ob J$.
The map $\chi_{\bu}^j=\Con(p_{\bu}^j)\circ(\tau_{\bu}^j)^{-1}\circ(\Id(\pi_{\bu}^j)\res\upw\theta_{\bu}^j)^{-1}$ is an isomorphism.
\[
\xymatrix{
\Id(D_{\module{{\bu}}}^j) \ar@/^2pc/[rrrrrr]^{\chi_{\bu}^j} & &  \upw\theta_{\bu}^j \ar[ll]^{\Id \pi_{\bu}^j} & & \upw\alpha_{\bu}^j\ar[ll]^{\tau_{\bu}^j}  \ar[rr]_-{\Con p_{\bu}^j}  & &  \Con(\cA(j)/\alpha_{\bu}^j)
}
\]
Moreover $\sum_{j\in \Ob J}\card \Con_c(\cA(j)/\alpha_{\bu}^j) = \sum_{j\in \Ob J}\card D_{\module{{\bu}}}^j<\kappa_{\module{\bu}}$, for all ${\bu}\in \Ide(U,\module{\cdot})^=$. So there exists a family $(B_{\bu}^j)_{\bu\in \Ide(U,\module{\cdot})^=}^{j\in\Ob J}$ of algebras such that:
\begin{enumerate}
\item The algebra $B_{\bu}^j$ is a subalgebra of $\cA(j)$.
\item The algebra $B_{\bu}^j/\alpha_{\bu}^j$ belongs to~$\cV$.
\item The containment $B_{\bu}^j\subseteq B_{\bv}^j$ holds.
\item The containment $\cA(f)(B_{\bu}^j)\subseteq B_{\bu}^k$ holds.
\item The morphism $\Con(q_{\bu}^j)$ is an isomorphism, where $q_{\bu}^j\colon B_{\bu}^j/\alpha_{\bu}^j\toinj \cA(j)/\alpha_{\bu}^j$ denotes the canonical embedding.
\item The inequality $\sum_{l\in \Ob J}\card{B_{\bu}^l}<\kappa_{\module{{\bu}}}$ holds.
\end{enumerate}
for all $\bu\le\bv$ in $\Ide(U,\module{\cdot})^=$ and for every morphism $f\colon j\to k$ in~$J$. Moreover, we can extend this family to $\Ide(U,\module{\cdot})$, by $B_{\bu}^j=\cA(j)$, the statements (1)--(5) hold for all $\bu\le\bv$ in $\Ide(U,\module{\cdot})$, and for every morphism $f\colon j\to k$ in~$J$.

Put:
\begin{align*}
F\colon\Ide(U,\module{\cdot})&\to\Pow(U)\\
\bu&\mapsto \bigcup\setm{\supp \tau^j(\Theta_{\cA(j)}(x,y))}{j\in\Ob J\text{ and }x,y\in B_{\bu}^j}
\end{align*}
so $F(\bu)<\kappa_{\module{\bu}}$ for all $\bu\in \Ide(U,\module{\cdot})^=$. As $(U,\module{\cdot})$ is $\vec{\kappa}$-compatible there exists an order-preserving map $\sigma\colon I\to \Ide(U,\module{\cdot})$ such that:
\begin{enumerate}
\item The equality $\module{\sigma(i)}=i$ holds for all $i\in I$.
\item The equality $V\cdot\sigma(i)=V\cdot\sigma(i')$ holds for any $i\le i'$ in $I$ and any kernel $V$ of $U$ contained in $F(\sigma(i))$.
\end{enumerate}
Let $i\in I$ and $j\in\Ob J$. The map $\xi_i^j=(\Con(q_{\sigma(i)}^j))^{-1}\circ \chi_{\sigma(i)}$ is an isomorphism, and the algebra $\cB(i,j)=B_{\sigma(i)}^j/\alpha_{\sigma(i)}^j\in\cV$ belongs to~$\cV$.
\[
\xymatrix @C=0.7in{
\Id(D_i^j) \ar@/^2pc/[rrr]^{\chi_{\sigma(i)}^j} \ar@/^-2pc/[drrr]_{\xi_i^j} &   \upw\theta_{\sigma(i)}^j \ar[l]^-{\Id(\pi_{\sigma(i)}^j)} &  \upw\alpha_{\sigma(i)}^j\ar[l]^{\tau_{\sigma(i)}^j}  \ar[r]_-{\Con(p_{\sigma(i)}^j)}  & \Con(\cA(j)/\alpha_{\sigma(i)}^j)\\
& & & \Con(\cB(i,j)) \ar[u]_-{\Con(q_{\sigma(i)}^j)}
}
\]

Let $i\le i'$ in $I$, let $j\in\Ob J$, let $x,y\in B_{\sigma(i)}^j$, let $\beta=\Theta_{\cA(j)}(x,y)$. The following equalities hold:
\begin{align}
\Con(p_{\sigma(i)}^j)(\beta\vee\alpha_{\sigma(i)}^j)&=\Theta_{\cA(j)/\alpha_{\sigma(i)}^j}(x/\alpha_{\sigma(i)}^j,y/\alpha_{\sigma(i)}^j) \notag\\
&=\Con(q_{\sigma(i)}^j)(\Theta_{\cB(i,j)}(x/\alpha_{\sigma(i)}^j,y/\alpha_{\sigma(i)}^j)).  \label{E:MainThEq0a}
\end{align}
similarly:
\begin{equation}\label{E:MainThEq0b}
\Con(p_{\sigma(i')}^j)(\beta)=\Con(q_{\sigma(i')}^j)(\Theta_{\cB(i',j)}(x/\alpha_{\sigma(i')}^j,y/\alpha_{\sigma(i')}^j)).
\end{equation}
Moreover, set $V=\supp(\tau^j(\beta))$. Then $V\subseteq F(\sigma(i))$, so $V\cdot\sigma(i)= V\cdot\sigma(i')$ and so:
\begin{align*}
\pi_{\sigma(i')}^j(\tau^j(\beta)) &= \phi_{\module{V\cdot\sigma(i')},\module{\sigma(i')}}(\tau^j(\beta)_{V\cdot\sigma(i')}) & & \text{by Lemma~\ref{L:prop_de_base_cond}(7)}\\
&= \phi_{\module{V\cdot\sigma(i)},i'}(\tau^j(\beta)_{V\cdot\sigma(i)}) & & \text{as $V\cdot\sigma(i')=V\cdot\sigma(i)$ and $\module{\sigma(i')}=i'$}\\
&= \phi_{i,i'}\circ\phi_{\module{V\cdot\sigma(i)},i}(\tau^j(\beta)_{V\cdot\sigma(i)})\\
&= \phi_{i,i'}\circ\pi_{\sigma(i)}^j(\tau^j(\beta)) & &\text{by Lemma~\ref{L:prop_de_base_cond}(7).}
\end{align*}
So:
\begin{equation}\label{E:MainThEq1}
\Id(\phi_{i,i'}^j)\circ\Id(\pi_{\sigma(i)}^j)\circ\Id(\tau^j)(\beta)= \Id(\pi_{\sigma(i')}^j)\circ\Id(\tau^j)(\beta)
\end{equation}
As $\alpha_{\bu}^j=\bigvee\setm{\beta\in\Conc\cA(j)}{\tau^j(\beta)\le\theta_{\bu}^j}$, we have $\Id(\tau^j)(\alpha_{\bu}^j)\le\theta_{\bu}^j$. Thus:
\begin{equation}\label{E:MainThEq2}
\tau_{\bu}^j(\beta\vee\alpha_{\bu}^j)=\Id(\tau^j)(\beta\vee\alpha_{\bu}^j)\vee\theta_{\bu}^j=\Id(\tau^j)(\beta)\vee\theta_{\bu}^j,\quad\text{for all $\bu\in\Ide(U,\module{\cdot})$.}
\end{equation}
As $\Id(\pi_{\bu}^j)(\theta_{\bu}^j)=0$, the following equation holds:
\begin{equation}\label{E:MainThEq3}
\Id(\pi_{\bu}^j)\circ\tau_{\bu}^j(\beta\vee\alpha_{\bu}^j)=\Id(\pi_{\bu}^j)\circ\Id(\tau^j)(\beta),\quad\text{for all $\bu\in\Ide(U,\module{\cdot})$.}
\end{equation}
So:
\begin{align}
\Id(\phi_{i,i'}^j)\circ\Id(\pi_{\sigma(i)}^j) \circ\tau_{\sigma(i)}^j(\beta\vee\alpha_{\sigma(i)}^j)
   &=\Id(\phi_{i,i'}^j)\circ\Id(\pi_{\sigma(i)}^j)\circ\Id(\tau^j)(\beta)    & & \text{by \eqref{E:MainThEq3}}\notag\\
   &=\Id(\pi_{\sigma(i')}^j)\circ\Id(\tau^j)(\beta) & & \text{by \eqref{E:MainThEq1}}\notag
\end{align}
and so, by \eqref{E:MainThEq3}, the following equality holds
\begin{equation}\label{E:MainThEq4}
\Id(\phi_{i,i'}^j)\circ\Id(\pi_{\sigma(i)}^j) \circ\tau_{\sigma(i)}^j(\beta\vee\alpha_{\sigma(i)}^j)=\Id(\pi_{\sigma(i')}^j)\circ\Id(\tau^j_{\sigma(i')})(\beta\vee\alpha_{\sigma(i')}^j) 
\end{equation}
thus:
\begin{align*}
&\xi_{i'}^j\circ\Id(\phi_{i,i'}^j)\circ(\xi_i^j)^{-1}(\Theta_{\cB(i,j)}(x/\alpha_{\sigma(i)}^j,y/\alpha_{\sigma(i)}^j))\\
&=\xi_{i'}^j\circ\Id(\phi_{i,i'}^j)\circ\Id(\pi_{\sigma(i)}^j)\circ\tau_{\sigma(i)}^j\circ(\Con(p_{\sigma(i)}^j)\res\upw\alpha_{\sigma(i)}^j)^{-1}\\
&\quad\quad\quad \circ\Con(q_{\sigma(i)}^j) (\Theta_{\cB(i,j)}(x/\alpha_{\sigma(i)}^j,y/\alpha_{\sigma(i)}^j))\\
&=\xi_{i'}^j\circ\Id(\phi_{i,i'}^j)\circ\Id(\pi_{\sigma(i)}^j)\circ\tau_{\sigma(i)}^j(\beta\vee\alpha_{\sigma(i)}^j) & & \text{by \eqref{E:MainThEq0a}}\\
&=\xi_{i'}^j\circ\Id(\pi_{\sigma(i')}^j)\circ\tau_{\sigma(i')}^j(\beta\vee\alpha_{\sigma(i')}^j) & & \text{by \eqref{E:MainThEq4}}\\
&=(\Con q_{\sigma(i')}^j)^{-1}\circ\Con(p_{\sigma(i')}^j)\circ(\tau_{\sigma(i')}^j)^{-1}\\
&\quad\quad\quad \circ\Id(\pi_{\sigma(i')}^j\res\upw\theta_{\sigma(i')}^j)^{-1}\circ\Id(\pi_{\sigma(i')}^j)\circ\tau_{\sigma(i')}^j(\beta\vee\alpha_{\sigma(i')}^j)\\
&=(\Con q_{\sigma(i')}^j)^{-1}\circ\Con(p_{\sigma(i')}^j)(\beta\vee\alpha_{\sigma(i')}^j)\\
&=\Theta_{\cB(i',j)}(x/\alpha_{\sigma(i')}^j,y/\alpha_{\sigma(i')}^j). & & \text{by \eqref{E:MainThEq0b}}
\end{align*}

It follows that the following morphism is well-defined:
\begin{align*}
g_{i,i'}^j\colon \cB(i,j) &\to \cB(i',j)\\
x/\alpha_{\sigma(i)}^j &\mapsto x/\alpha_{\sigma(i')}^j
\end{align*}
and the following diagram is commutative:
\begin{equation}\label{CD:MainThCD1}
\begin{CD}
\Id(D_{i'}^j) @>\xi_{i'}^j>> \Con(\cB(i',j))\\
@A\Id(\phi_{i,i'}^j)AA @AA\Con(g_{i,i'}^j)A \\
\Id(D_{i}^j) @>\xi_{i}^j>> \Con(\cB(i,j)) \\
\end{CD}
\end{equation}

Let $f\colon j\to k$ be an arrow of $J$, let $i\in I$, and put $\bu=\sigma(i)$. As $(\vec{\tau},\cA)$ is a $U$-quasi-lifting of~$\cD$, the following diagram is commutative:
\begin{equation}\label{CD:MainThCD2}
\begin{CD}
\Id(D_i^k) @<\Id(\pi_{\bu}^k)<< \Id(\Cond(\cD(k),U)) @<\Id(\tau^k)<< \Con(\cA(k)) \\
@A\Id(\psi_i^f)AA @A\Cond(\cD(f),U)AA @A\Con(\cA(f))AA \\
\Id(D_i^j) @<\Id(\pi_{\bu}^j)<< \Id(\Cond(\cD(j),U)) @<\Id(\tau^j)<< \Con(\cA(j)) \\
\end{CD}
\end{equation}
Let $\beta\in\Conc\cA(j)$ such that $\tau^j(\beta)\in\theta_{\bu}^k$. Thus $\pi_{\bu}^j(\tau^j(\beta))=0$, so:
\[
0=\psi_i^f(\pi_{\bu}^j(\tau^j(\beta)))=\pi_{\bu}^k\Big(\tau^k\big(\Conc(\cA(f))(\beta)\big)\Big)
\]
and so $\Con(\cA(f))(\beta)\le\alpha_{\bu}^k$. Thus:
\[\Con(\cA(f))(\alpha_{\bu}^j)=\Con(\cA(f))\big(\bigvee\setm{\beta\in\Conc(\cA(j))}{\tau^j(\beta)\in\theta_{\bu}^k}\big)\le\alpha_{\bu}^k\]

So the following morphism is well-defined:
\begin{align*}
\tilde{f}_i\colon \cA(j)/\alpha_{\bu}^j &\to \cA(k)/\alpha_{\bu}^k\\
x/\alpha_{\bu}^j &\mapsto \cA(f)(x)/\alpha_{\bu}^k
\end{align*}
and the following diagram is commutative:
\begin{equation}\label{CD:MainThCD3}
\begin{CD}
\cA(k) @>p_{\sigma(i)}^k>> \cA(k)/\alpha_{\sigma(i)}^k   @<q_{\sigma(i)}^k<< \cB(i,k)\\
@A\cA(f)AA            @A{\tilde{f}_i}AA           @AA{\tilde{f}_i\res\cB(i,j)}A\\
\cA(j) @>p_{\sigma(i)}^j>> \cA(j)/\alpha_{\sigma(i)}^j   @<q_{\sigma(i)}^j<< \cB(i,j)
\end{CD}
\end{equation}
Combining the commutative diagrams \eqref{CD:MainThCD3} and \eqref{CD:MainThCD4} together with the definitions of $\xi_{i}^k$ and $\xi_{i}^j$, we obtain the commutativity of the following diagram:
\begin{equation}\label{CD:MainThCD4}
\begin{CD}
\Id(D_{i}^k) @>\xi_{i}^k>> \Con(\cB(i,k))\\
@A\Id(\psi_i^f)AA @AA\Con(\tilde{f}_i\res\cB(i,j))A \\
\Id(D_{i}^j) @>\xi_{i}^j>> \Con(\cB(i,j)) \\
\end{CD}
\end{equation}

For $i\le i'$ in $I$ and $f\colon j\to k$ in $J$, put
\begin{align*}
\cB(i\to i',f\colon j\to k)\colon \cB(i,j) &\to \cB(i',k)\\
x/\alpha_{\sigma(i)}^j &\mapsto \cA(f)(x)/\alpha_{\sigma(i')}^k.
\end{align*}
Let $i''\ge i'$ in $I$ and $f'\colon k\to k'$ in $J$, then:
\begin{align*}
\cB(i'\to i'',f')\circ\cB(i\to i',f)(x/\alpha_{\sigma(i)}^j)&=\cB(i'\to i'',f')(\cA(f)(x)/\alpha_{\sigma(i')}^k)\\
&=\cA(f')(\cA(f)(x))/\alpha_{\sigma(i')}^{k'}\\
&=\cA(f'\circ f)(x)/\alpha_{\sigma(i')}^{k'}\\
&=\cB(i\to i'',f'\circ f)(x/\alpha_{\sigma(i)}^j).
\end{align*}
Thus $\cB\colon I\times J\to\cV$ is a functor. Moreover by \eqref{CD:MainThCD1} and \eqref{CD:MainThCD4} the following diagram is commutative:
\[
\begin{CD}
\Id(D_{i'}^k)         @>\xi_{i'}^k>>      \Con(\cB(i',k))\\
@A\Id(\phi_{i,i'}^k)AA   @AA\Con(g_{i,i'}^k)A\\
\Id(D_{i}^k)         @>\xi_{i'}^k>>      \Con(\cB(i,k))\\
@A{\Id(\psi_i^f)}AA                        @AA{\Con(\tilde{f}_i\res\cB(i,j)))}A \\
\Id(D_{i}^j)         @>\xi_{i}^j>>      \Con(\cB(i,j))\\
\end{CD}
\]
As $\widehat{\cD}(i\le i',f) = \phi_{i,i'}^k\circ \psi_i^f$ and $\cB(i\to i',f)=g_{i,i'}^k\circ \tilde{f}_i$, the following diagram is commutative:
\[
\begin{CD}
\Id(\widehat{\cD}(i',k))         @>\xi_{i'}^k>>      \Con(\cB(i',k))\\
@A\Id(\vec{\cD}(i\to i',f))AA  @AA\Con(\cB(i\to i',f))A\\
\Id(\widehat{\cD}(i,j))        @>\xi_{i}^j>>      \Con(\cB(i,j))
\end{CD}
\]
\end{proof}

\section{Critical points}

\begin{definition}
Let~$\cV$ be a class of algebras of the same similarity type. The \emph{congruence class} of~$\cV$ is the class of all \jzs s $S$ such there exists $A\in\cV$ such that $S$ isomorphic to $\Conc A$. We denote this class by $\Conc\cV$.
\end{definition}

\begin{definition}
Let~$\cV_1$ be a class of algebras of the same similarity type, let~$\cV_2$ be a class of algebras of the same similarity type. The \emph{critical point of~$\cV_1$ under~$\cV_2$} is:
\[\crit{\cV_1}{\cV_2}=\min\setm{\card{D}}{D\in \Conc(\cV_1) - \Conc(\cV_2)},\]
if $\Conc\cV_1\not\subseteq\Conc\cV_2$, otherwise we put $\crit{\cV_1}{\cV_2}=\infty$.

The \emph{symmetric critical point of~$\cV_1$ and~$\cV_2$} is defined as
 \[
 \critsym{\cV_1}{\cV_2}=\min\set{\crit{\cV_1}{\cV_2}, \crit{\cV_2}{\cV_1} };
 \]
it is simply called \emph{critical point} in~\cite{CLPSurv}.
\end{definition}

The following corollary shows that, for a fixed category $J$ and a tree $T$, if~$\cV_1$ and~$\cV_2$ lift the same diagrams of \jzs s, indexed by $J$, of not too large objects, then~$\cV_1$ and~$\cV_2$ lift the same diagrams of \jzs s, indexed by $T\times J$, of not too large objects. The condition $(1)$ above is automatically satisfied if $\card\sL_1\le\kappa$ and $\card\sL_2\le\lambda$.

\begin{corollary}\label{C:liftinginitialstep}
Let $\cS$ be the variety of all \jzs s, let $\sL_1$ and $\sL_2$ be similarity types, let~$\cV_1$ be a variety of $\sL_1$-algebras, let~$\cV_2$ be a variety of $\sL_2$-algebras, let $\lambda<\kappa$ be infinite cardinals, let $I$ be a well-founded tree, and let~$J$ be a small category, such that:
\begin{enumerate}
\item $\cV_1$ is locally $\le\kappa$ and $\cV_2$ is locally $\le\lambda$.
\item $\card\Mor(J) <\kappa$.
\item $\card I \le\kappa$.
\item $\card(\dnw i)<\cf\kappa$ for all $i\in I$.
\item Every functor $\cD\colon J\to\cS$ such that $\card\cD(j)\le\kappa$ for all $j\in\Ob J$, which has a lifting in~$\cV_1$, has a lifting in~$\cV_2$.
\end{enumerate}
Then every functor $\cD\colon I\times J\to\cS$ such that $\card\cD(i,j)<\kappa$ for all $i\in I$ and all $j\in\Ob J$, which has a lifting in~$\cV_1$, has a lifting in~$\cV_2$.
\end{corollary}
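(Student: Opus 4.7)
The plan is to reduce the $(I\times J)$-indexed lifting problem to a purely $J$-indexed one by taking a condensate along a norm-covering of~$I$, applying hypothesis~(5) to that condensate, and then reconstructing a full $(I\times J)$-indexed lifting via Theorem~\ref{T:mainlifting}.

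First, I would use Corollary~\ref{C:Existnormcovering} together with hypotheses~(3) and~(4) to produce a tight $\kappa$-compatible norm-covering $(U,\module{\cdot})$ of~$I$ with $\card U=\kappa$. Given a lifting $\cA\colon I\times J\to\cV_1$ of~$\cD$, I would then apply the L\"owenheim-Skolem Lemma~\ref{L:LS} (with all reduction congruences set to zero and $Q_i=\emptyset$) to the index category $I\times J$, in order to replace $\cA$ by a subdiagram whose values satisfy $\card\cA(i,j)\le\kappa$ while preserving $\Conc\circ\cA\cong\cD$; here I use the bound of Lemma~\ref{L:LS} together with $\card\Mor J<\kappa$, $\card I\le\kappa$, and $\card\cD(i,j)<\kappa$. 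Next, I would form the condensate functor $\cB\colon J\to\cV_1$ given by $\cB(j)=\Cond(\widetilde{\cA}(j),U)$ via Proposition~\ref{P:CondIsFunctor}; by Lemma~\ref{L:liftingimplyquasiliftingext} there is a canonical natural transformation $\vec{\tau}$ making $(\vec{\tau},\cB)$ a $U$-quasi-lifting of $\widetilde{\cD}\colon J\to\cS^I$ in~$\cV_1$.

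By Lemma~\ref{L:prop_de_base_cond}, $\cB(j)$ is a directed union of $\le\kappa$ many finite products of the $\cA(i,j)$, each of which has cardinality $\le\kappa$; hence $\card\cB(j)\le\kappa$ and $\card\Conc\cB(j)\le\kappa$. Thus $\Conc\circ\cB\colon J\to\cS$ is a diagram of semilattices of size $\le\kappa$ that is lifted in~$\cV_1$ (by $\cB$ itself), and hypothesis~(5) furnishes a lifting $\cJ\colon J\to\cV_2$ together with a natural isomorphism $\vec{\eta}\colon\Conc\circ\cJ\to\Conc\circ\cB$. Since precomposition with a natural isomorphism preserves the isomorphism of Definition~\ref{D:quasilift} (the kernels $\alpha_{\bu}$ pull back through $\vec{\eta}$ and both factors of the composite are lattice isomorphisms on the relevant upper sets), the pair $(\vec{\tau}\circ\vec{\eta},\cJ)$ is a $U$-quasi-lifting of $\widetilde{\cD}$ in~$\cV_2$.

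Finally, I would apply Theorem~\ref{T:mainlifting} to this $U$-quasi-lifting to promote it to a genuine lifting $I\times J\to\cV_2$ of $\widehat{\cD}$. For this, I need $\cV_2$ to be $\bigl(\Ide(U,\module{\cdot})^=,J,(\kappa)_{\bu}\bigr)$-L\"owenheim-Skolem, which I would deduce from Lemma~\ref{L:LS2}: hypothesis~(1) gives $\cV_2$ locally $\le\lambda<\kappa$, hypothesis~(2) gives $\lambda+\card\Mor J<\kappa$, and the tightness of $(U,\module{\cdot})$ combined with hypothesis~(4) yields $\card(\Ide(U,\module{\cdot})\dnw\bu)=\card(I\dnw\module{\bu})<\cf\kappa$, ensuring that the cofinality-bounded sums in the second clause of Lemma~\ref{L:LS2} stay strictly below~$\kappa$. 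The cardinality requirement $\sum_{j\in\Ob J}\card\cD(i,j)<\kappa$ of Theorem~\ref{T:mainlifting} likewise follows from $\card\cD(i,j)<\kappa$ and $\card\Ob J\le\card\Mor J<\kappa$ via the same cofinality reasoning. The main obstacle of the proof is precisely this bookkeeping of cardinality and cofinality estimates; once it is carried out, Theorem~\ref{T:mainlifting} delivers the desired lifting and the proof is complete.
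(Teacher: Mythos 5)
Your proof is correct and follows essentially the same route as the paper's: reduce the lifting to cardinality $\le\kappa$ via Lemma~\ref{L:LS}, build the condensate along a tight $\kappa$-compatible norm-covering from Corollary~\ref{C:Existnormcovering}, invoke Lemma~\ref{L:liftingimplyquasiliftingext} to get a $U$-quasi-lifting, apply hypothesis~(5) to the $J$-indexed diagram $\Conc\circ\cB$, and finish with Lemma~\ref{L:LS2} and Theorem~\ref{T:mainlifting}. The only difference is cosmetic: you make explicit the composition $\vec\tau\circ\vec\eta$ and verify directly that precomposing a quasi-lifting with a natural isomorphism again yields a quasi-lifting, a step the paper leaves tacit.
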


\begin{proof}
Let $\cD\colon I\times J\to\cS$ be a functor such that $\card\cD(i,j)<\kappa$ for all $i\in I$ and all $j\in\Ob J$, let $\cA\colon I\times J\to\cV_1$ be a lifting of~$\cD$, denote by $\alpha_{i,j}$ the identity congruence of $\cA(i,j)$, for all $i\in I$ and all $j\in\Ob J$. By using Lemma~\ref{L:LS} we can assume that:
\[\card\cA(i,j)\le \kappa + \sum_{i'\le i}\sum_{j'\in\Ob J}\sum_{f\colon j'\to j}\card\cD(i',j') \le \kappa \]
Moreover by Corollary~\ref{C:Existnormcovering} there exists a tight $\kappa$-compatible norm-covering $(U,\module{\cdot})$ of $I$ such that $\card U\le\kappa$. As seen in Section~\ref{S:Basic}, the functor $\cA$ corresponds to a functor $\widetilde{\cA}\colon J\to\cV_1^I$ and the functor~$\cD$ corresponds to a functor $\widetilde{\cD}\colon J\to\cS^I$. Lemma~\ref{L:liftingimplyquasiliftingext} implies that there exists $\tau=(\tau^j)_{j\in \Ob J}$ such that $(\tau,\Cond(\widetilde{\cA}(-),U))$ is a $U$-quasi-lifting of $\widetilde{\cD}$, and:
\[\card \Cond(\widetilde{\cA}(j),U) \le \sum_{V\in\Powf{U}}\card \prod_{u\in V} \cA(\module{u},j)\le \sum_{V\in\Powf{U}}\kappa\le\kappa,\]
for all $j\in\Ob J$. So there exists a lifting of $\Conc\Cond(\widetilde{\cA}(-),U))$ in~$\cV_2$, and so there exists a $U$-quasi-lifting $\cB\colon J\to\cV_2$ of $\widetilde{\cD}$ in~$\cV_2$. By Lemma~\ref{L:LS2},~$\cV_2$ is $(\Ide(U,\module{\cdot})^=,J,(\kappa)_{\bu\in\Ide(U,\module{\cdot})^=} )$-L\"owenheim-Skolem, so, by Theorem \ref{T:mainlifting},~$\cD$ has a lifting in~$\cV_2$.
\end{proof}

Using a simple induction argument, we obtain the following corollary.

\begin{corollary}\label{C:lifting-with-nth-succesor-cardinal}
Let $\cS$ be the variety of all \jzs s, let $\sL_1$ and $\sL_2$ be similarity types, let~$\cV_1$ be a variety of $\sL_1$-algebras, let~$\cV_2$ be a variety of $\sL_2$-algebras, let $\kappa$ be an infinite cardinal, let $I_1,I_2,\dots,I_n$ be well-founded trees, and let~$J$ be a category, such that:
\begin{enumerate}
\item $\cV_1$ is locally $\le\kappa^+$ and $\cV_2$ is locally $\le\kappa$.
\item $\card I_1 + \card I_2 + \dots + \card I_{n-1} + \card\Mor J\le\kappa$.
\item $\card I_n \le\kappa^+$.
\item $\card \dnw i\le\kappa$ for each $i\in I_n$.
\item Every diagram of \jzs s $\cD\colon J\to\cS$, such that $\card\cD(j)\le\kappa^{+n}$, which has a lifting in~$\cV_1$ has a lifting in~$\cV_2$.
\end{enumerate}
Then every diagram of \jzs s $\cD\colon I_1\times I_2\times \dots\times I_n\times J\to\cS$, such that $\card\cD(i_1,i_2,\dots,i_n,j)\le\kappa$ for all $(i_1,i_2,\dots,i_n,j) \in I_1\times I_2\times\dots\times I_n\times\Ob J$, which has a lifting in~$\cV_1$ has a lifting in~$\cV_2$.
\end{corollary}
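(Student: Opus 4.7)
Proceed by induction on $n$. For the base case $n=1$, the statement is exactly Corollary~\ref{C:liftinginitialstep} applied with $\lambda=\kappa$ and the corollary's $\kappa$ replaced by $\kappa^+$: the locally-$\le$ bounds on $\cV_1,\cV_2$ match, $\card\Mor J\le\kappa<\kappa^+$, $\card I_1\le\kappa^+$, and $\card(\dnw i)\le\kappa<\cf\kappa^+=\kappa^+$ (since $\kappa^+$ is regular), so the six hypotheses of that corollary hold, and it yields the desired conclusion for diagrams of size $<\kappa^+$, i.e.\ $\le\kappa$, indexed by $I_1\times J$.

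For the inductive step, suppose the statement holds for $n-1$ trees; given the data for $n$ trees, first peel off $I_1$ by applying Corollary~\ref{C:liftinginitialstep} with $\lambda=\kappa^{+(n-1)}$, the corollary's $\kappa$ taken as $\kappa^{+n}$, $I=I_1$, and $J$ unchanged. One checks routinely that $\cV_1$ is locally $\le\kappa^+\le\kappa^{+n}$, $\cV_2$ is locally $\le\kappa\le\kappa^{+(n-1)}$, $\card\Mor J\le\kappa<\kappa^{+n}$, $\card I_1\le\kappa\le\kappa^{+n}$, and $\card(\dnw i)\le\kappa<\cf\kappa^{+n}=\kappa^{+n}$ for $i\in I_1$, while hypothesis~(5) of our corollary supplies the required $\cV_1$-to-$\cV_2$ lifting for size-$\le\kappa^{+n}$ diagrams on $J$. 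The corollary then delivers: every functor $\cE\colon I_1\times J\to\cS$ of size $\le\kappa^{+(n-1)}$ that lifts in $\cV_1$ also lifts in $\cV_2$.

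Now invoke the inductive hypothesis on the $n-1$ trees $I_2,\dots,I_n$ with the enlarged category $J'=I_1\times J$. Condition~(2) persists since $\card I_2+\cdots+\card I_{n-1}+\card\Mor J'\le\kappa+\kappa\cdot\kappa=\kappa$; conditions~(1),~(3),~(4) are inherited verbatim; and the lifting input required by condition~(5) of the reduced problem is precisely the output of the peeling step above. The induction then produces liftings in $\cV_2$ of all size-$\le\kappa$ functors $\cD\colon I_2\times\cdots\times I_n\times J'\to\cS$ that lift in $\cV_1$, and identifying the index category $I_2\times\cdots\times I_n\times(I_1\times J)$ with $I_1\times\cdots\times I_n\times J$ completes the proof. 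No real obstacle arises; the only delicate points are the cardinal bookkeeping at each peeling stage and the use of the regularity of successor cardinals to guarantee $\cf\kappa^{+k}=\kappa^{+k}$ for $k\ge 1$.
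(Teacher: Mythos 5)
Your proof is correct and follows exactly the induction the paper alludes to when it writes "Using a simple induction argument, we obtain the following corollary": the base case is Corollary~\ref{C:liftinginitialstep} with $\lambda=\kappa$, corollary's $\kappa$ taken as $\kappa^+$, and the inductive step peels off $I_1$ via the same corollary (with $\lambda=\kappa^{+(n-1)}$, corollary's $\kappa$ as $\kappa^{+n}$) before invoking the inductive hypothesis on $I_2,\dots,I_n$ with $J'=I_1\times J$. All the cardinal checks are in order, and reserving $I_n$ (the one tree allowed size $\kappa^+$) for the final peeling stage, where the budget is exactly $\kappa^+$, is the right bookkeeping.
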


The following corollary is similar to Corollary~\ref{C:liftinginitialstep}. It shows that with finitely generated congruence-distributive varieties of algebras we can go one step further.

\begin{corollary}\label{C:lifting-inFGCDvariety}
Let $\cS$ be the variety of all \jzs s, let $\sL_1$ and $\sL_2$ be similarity types, let~$\cV_1$ be a variety of $\sL_1$-algebras, let~$\cV_2$ be a finitely generated congruence-distributive variety of $\sL_2$-algebras, let $I$ be a lower finite tree, and let~$J$ be a finite poset, such that:
\begin{enumerate}
\item $\cV_1$ is locally $\le\aleph_0$.
\item $\card I\le\aleph_0$.
\item Every functor $\cD\colon J\to\cS$ such that $\card\cD(j)\le\aleph_0$ for all $j\in J$, which has a lifting in~$\cV_1$ has a lifting in~$\cV_2$.
\end{enumerate}
Then every functor $\cD\colon I\times J\to\cS$, such that $\cD(i,j)$ is finite for all $(i,j)\in I \times J$, which has a lifting in~$\cV_1$ has a lifting in~$\cV_2$.
\end{corollary}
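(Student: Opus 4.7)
The plan is to mimic the proof of Corollary~\ref{C:liftinginitialstep} almost verbatim, with $\aleph_0$ playing the role of the cardinal~$\kappa$ and Lemma~\ref{L:LS-FGCDV} (the L\"owenheim-Skolem lemma for finitely generated congruence-distributive varieties) replacing Lemma~\ref{L:LS2}. Given a lifting $\cA\colon I\times J\to\cV_1$ of~$\cD$, I would first invoke Lemma~\ref{L:LS} with empty parameter sets and trivial target congruences in order to shrink~$\cA$ so that $\card\cA(i,j)\le\aleph_0$ for every $(i,j)\in I\times\Ob J$: the bound provided by that lemma reduces to $\aleph_0+\card\Mor((I\times J)\res(i,j))+\sum_{(i',j')\unlhd(i,j)}\card\cD(i',j')$, which is countable because~$I$ is countable, $J$ is finite, and every $\cD(i',j')$ is finite.

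Next I would construct a tight $\aleph_0$-compatible norm-covering $(U,\module{\cdot})$ of~$I$ by applying Proposition~\ref{P:Existkappcomp} with $\kappa_t=\kappa'_t=\aleph_0$ for every $t\in I$; the two hypotheses of that proposition reduce to trivial finiteness statements since~$I$ is lower finite (hence well-founded), and the construction yields $\card U\le\aleph_0$. From the explicit description of~$U$ as a set of finite partial sections over finite chains of~$I^-$ one sees directly that $U$ is lower finite, and that $\Ide(U,\module{\cdot})^=$ is lower finite as well: an extreme ideal of norm~$t$ is identified with an element of $\prod_{s\in\phi(t)}\kappa'_s$, and its predecessors in $\Ide(U,\module{\cdot})$ are exactly its restrictions to the finitely many sets $\phi(s)$ for $s\le t$. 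Currying~$\cA$ and~$\cD$ to functors $\widetilde{\cA}\colon J\to\cV_1^I$ and $\widetilde{\cD}\colon J\to\cS^I$, Lemma~\ref{L:liftingimplyquasiliftingext} supplies a $U$-quasi-lifting $(\tau,\Cond(\widetilde{\cA}(-),U))$ of~$\widetilde{\cD}$ in~$\cV_1$; by Lemma~\ref{L:prop_de_base_cond}(5) each algebra $\Cond(\widetilde{\cA}(j),U)$ is a directed union of finite products of countable algebras, hence is itself countable, and so each $\Conc\Cond(\widetilde{\cA}(j),U)$ is a countable \jzs.

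Hypothesis~(3) of the corollary therefore furnishes a functor $\cB\colon J\to\cV_2$ together with a natural isomorphism $\eta\colon\Conc\circ\cB\to\Conc\circ\Cond(\widetilde{\cA}(-),U)$, and $(\tau\circ\eta,\cB)$ is a $U$-quasi-lifting of~$\widetilde{\cD}$ in~$\cV_2$. Lemma~\ref{L:LS-FGCDV}, applied to the finite poset~$J$ and the lower finite poset $\Ide(U,\module{\cdot})^=$, yields that~$\cV_2$ is $\bigl(\Ide(U,\module{\cdot})^=,J,(\aleph_0)_{\bu}\bigr)$-L\"owenheim-Skolem, and the hypothesis $\sum_{j\in\Ob J}\card\cD(i,j)<\aleph_0$ of Theorem~\ref{T:mainlifting} is immediate from finiteness of each $\cD(i,j)$ and of~$J$. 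Theorem~\ref{T:mainlifting} then delivers the desired lifting of~$\widehat{\cD}$ in~$\cV_2$. The only non-routine verification I foresee is the lower finiteness of $\Ide(U,\module{\cdot})^=$, which is the lynchpin allowing Lemma~\ref{L:LS-FGCDV} to be fed into Theorem~\ref{T:mainlifting}; everything else is a transcription of the Corollary~\ref{C:liftinginitialstep} argument.
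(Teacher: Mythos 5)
Your proof is correct and follows the same route as the paper's own argument: shrink the lifting with Lemma~\ref{L:LS}, build a tight $\aleph_0$-compatible norm-covering via Proposition~\ref{P:Existkappcomp}/Corollary~\ref{C:Existnormcovering}, pass to the condensate functor, invoke Lemma~\ref{L:liftingimplyquasiliftingext} and hypothesis~(3) to obtain a $U$-quasi-lifting in~$\cV_2$, and finish with Lemma~\ref{L:LS-FGCDV} and Theorem~\ref{T:mainlifting}. You are moreover right to single out and verify the lower finiteness of $\Ide(U,\module{\cdot})^=$ (and, implicitly, the explicit use of hypothesis~(3) to pass from a quasi-lifting in~$\cV_1$ to one in~$\cV_2$): both steps are required for Lemma~\ref{L:LS-FGCDV} and Theorem~\ref{T:mainlifting} to apply, yet the paper's terse proof leaves them unspoken, and reading the lower finiteness off the concrete description of the norm-covering in the proof of Proposition~\ref{P:Existkappcomp} is exactly the right way to check it.
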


\begin{proof}
Let $\cD\colon I\times J\to\cS$ be a functor such that $\cD(i,j)$ is finite for all $(i,j)\in I\times J$. Let $\cA\colon I\times J\to\cV_1$ be a lifting of~$\cD$. Denote by $\alpha_{i,j}$ the identity congruence of $\cA(i,j)$, for all $(i,j)\in I\times J$. By using Lemma~\ref{L:LS}, we can assume that:
\[\card\cA(i,j)\le \aleph_0 + \sum_{i'\le i}\sum_{j'\le j}\card\cD(i',j') \le \aleph_0 \]

Moreover, by Corollary~\ref{C:Existnormcovering}, there exists a tight $\aleph_0$-compatible norm-covering $(U,\module{\cdot})$ of $I$ such that $\card U\le\aleph_0$. The functor $\cA$ corresponds to a functor $\widetilde{\cA}\colon J\to\cV^I$ and the functor~$\cD$ corresponds to a functor $\widetilde{\cD}\colon J\to\cV^I$. Lemma~\ref{L:liftingimplyquasiliftingext} implies that there exists $\tau=(\tau^j_{j\in J})$ such that $(\tau,\Cond(\widetilde{\cA}(-),U))$ is a $U$-quasi-lifting of $\widetilde{\cD}$, and:
\[\card \Cond(\widetilde{\cA}(j),U) \le \sum_{V\in\Powf{U}}\card \prod_{u\in V} \cA(\module{u},j)\le \sum_{V\in\Powf{U}}\aleph_0=\aleph_0,\ \text{for all $j\in J$.}\]

Lemma~\ref{L:LS-FGCDV} shows that~$\cV_2$ is $(\Ide(U,\module{\cdot})^=,J,\aleph_0)$-L\"owenheim-Skolem. By Theorem~\ref{T:mainlifting},~$\cD$ has a lifting in~$\cV_2$.
\end{proof}

Combining Corollary~\ref{C:lifting-with-nth-succesor-cardinal} and Corollary~\ref{C:lifting-inFGCDvariety} gives us the following corollary. This result is similar to Corollary~\ref{C:lifting-with-nth-succesor-cardinal}, but it involves diagrams of finite \jzs s. This makes it possible to give a bound on the critical point, in case we can find a finite diagram of finite \jzs s, indexed by some Boolean algebra, with a lifting in the first variety but with no lifting in the second one.

\begin{corollary}\label{C:lifting-inFGCDvariety-with-alephn}
Let $\cS$ be the variety of all \jzs s, let $\sL_1$ and $\sL_2$ be similarity types, let~$\cV_1$ be a variety of $\sL_1$-algebras locally $\le\aleph_0$, let~$\cV_2$ be a finitely generated congruence-distributive variety of $\sL_2$-algebras, and let $I_1,I_2,\dots, I_n$ be finite trees, let $I_{n+1}$ be a lower finite countable tree. If $\crit{\cV_1}{\cV_2}>\aleph_n$, then every functor $\cD\colon I_1\times I_2\times \dots\times I_{n+1}\to\cS$ such that $\cD(i_1,i_2,\dots,i_{n+1})$ is finite for all $(i_1,i_2,\dots,i_{n+1})\in I_1\times I_2\times\dots \times I_{n+1}$, which has a lifting in~$\cV_1$, has a lifting in~$\cV_2$.
\end{corollary}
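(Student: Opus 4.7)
The plan is to derive this corollary as a clean two-step composition. First, I would use Corollary~\ref{C:lifting-with-nth-succesor-cardinal} to lift the hypothesis $\crit{\cV_1}{\cV_2}>\aleph_n$ (which controls only \emph{single} \jzs s of cardinality $\le\aleph_n$) into a statement about \emph{diagrams} of countable \jzs s indexed by $I_1\times\cdots\times I_n$. Then I would feed that statement into Corollary~\ref{C:lifting-inFGCDvariety} with $I=I_{n+1}$ to pass from countable to finite, gaining one further factor.

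More precisely, first I would apply Corollary~\ref{C:lifting-with-nth-succesor-cardinal} with $\kappa=\aleph_0$, trees $I_1,\ldots,I_n$, and $J$ the one-object, one-arrow category. Hypothesis~(1) holds because $\cV_1$ is locally $\le\aleph_0\le\aleph_1=\aleph_0^+$ and because $\cV_2$, being a finitely generated variety, is locally finite, hence locally $\le\aleph_0$. Hypotheses~(2)--(4) are trivial as the $I_i$ are finite and $J$ has a single morphism. Hypothesis~(5) asks that every $\cD\colon J\to\cS$ with $\card\cD(j)\le\aleph_n$ that lifts in~$\cV_1$ lifts in~$\cV_2$; since $J$ is trivial this just says every \jzs\ of cardinality $\le\aleph_n$ in $\Conc\cV_1$ lies in $\Conc\cV_2$, which is exactly $\crit{\cV_1}{\cV_2}>\aleph_n$. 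The conclusion is that every diagram $\cD\colon I_1\times\cdots\times I_n\to\cS$ of \jzs s of cardinality $\le\aleph_0$, with a lifting in $\cV_1$, has a lifting in $\cV_2$.

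Next I would apply Corollary~\ref{C:lifting-inFGCDvariety} with $I=I_{n+1}$ and $J=I_1\times\cdots\times I_n$. Since each $I_i$ is finite, $J$ is a finite poset; $I_{n+1}$ is a lower finite countable tree, so hypothesis~(2) holds; hypothesis~(1) is given. Hypothesis~(3)---that every $J$-indexed diagram of countable \jzs s liftable in $\cV_1$ is liftable in $\cV_2$---is precisely the output of Step~1. The conclusion delivers a lifting in $\cV_2$ for every functor $\cD\colon I_{n+1}\times J\to\cS$ with finite values that is liftable in $\cV_1$. Since $I_{n+1}\times(I_1\times\cdots\times I_n)$ is canonically isomorphic to $I_1\times\cdots\times I_{n+1}$ as a poset (hence as a category), this is the desired statement.

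There is essentially no obstacle, as the work has been done in the two previous corollaries. The only points requiring a brief justification are that a finitely generated variety is locally finite (so that $\cV_2$ is locally $\le\aleph_0$, which is needed to invoke Corollary~\ref{C:lifting-with-nth-succesor-cardinal} rather than just Corollary~\ref{C:lifting-inFGCDvariety}), and the harmless reindexing of the product of finite trees with $I_{n+1}$.
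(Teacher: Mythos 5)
Your proposal is correct and is precisely the argument the paper intends: the text immediately preceding the corollary says ``Combining Corollary~\ref{C:lifting-with-nth-succesor-cardinal} and Corollary~\ref{C:lifting-inFGCDvariety} gives us the following corollary,'' and you have spelled out exactly that composition, with the right choices of $\kappa$, $J$, and the finitely-generated-implies-locally-finite observation.
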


The following corollary is a variant of Corollary~\ref{C:liftinginitialstep} that involves a class of \jzs s and a variety of algebras.

\begin{corollary}
Let $\cS$ be a class of \jzs s \textup(resp., \jzus s\textup) closed under finite products and directed unions \textup(resp., directed unions preserving $0$ and $1$\textup), let $\sL$ be a similarity type, let~$\cV$ be a variety of $\sL$-algebras, let $\lambda<\kappa$ be infinite cardinals, let $I$ be a well-founded tree, and let~$J$ be a category, such that:
\begin{enumerate}
\item $\lambda+\card\Mor(J) <\kappa$.
\item $\card I\le\kappa$.
\item $\card (\dnw i)<\cf\kappa$ for all $i\in I$.
\item Every diagram of \jzs s \textup(resp., \jzus s\textup) $\cD\colon J\to\cS$ such that $\card\cD(j)\le\kappa$ for all $j\in\Ob J$, has a lifting in~$\cV$.
\end{enumerate}
Then every functor $\cD\colon I\times J\to\cS$ such that $\card\cD(i,j)<\kappa$ for all $i\in I$ and all $j\in\Ob J$, has a lifting in~$\cV$.
\end{corollary}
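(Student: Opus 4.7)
The plan is to mimic the argument of Corollary~\ref{C:liftinginitialstep}, with the simplification that no auxiliary lifting in a ``source variety'' is needed: the diagram $\cD$ itself plays the role of the condensate input, and the class $\cS$ absorbs the condensate by the assumed closure properties.

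First, I fix $\cD\colon I\times J\to\cS$ with $\card\cD(i,j)<\kappa$ for all $(i,j)$, and curry it to $\widetilde{\cD}\colon J\to\cS^I$. Using hypotheses $(2)$, $(3)$, and Corollary~\ref{C:Existnormcovering}, I choose a tight $\kappa$-compatible norm-covering $(U,\module{\cdot})$ of $I$ with $\card U\le\kappa$. I then form the functor $\cE\colon J\to\cS$ by $\cE(j)=\Cond(\widetilde{\cD}(j),U)$ and $\cE(g)=\Cond(\widetilde{\cD}(g),U)$, which is indeed a functor by Proposition~\ref{P:CondIsFunctor}. Lemma~\ref{L:prop_de_base_cond}(5) identifies $\cE(j)$ as a directed union of finite products of the $\cD(i,j)$; together with the closure assumption on $\cS$, this places $\cE(j)$ back in $\cS$. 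Its cardinality is bounded by
\[
\card\cE(j)\le\sum_{V\in\Powf{U}}\prod_{u\in V}\card\cD(\module{u},j)\le\kappa,
\]
so hypothesis $(4)$ applies and yields a functor $\cB\colon J\to\cV$ together with a natural isomorphism $\vec\tau\colon\Conc\circ\cB\xrightarrow{\cong}\cE$.

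By the trivial lemma stated just after Definition~\ref{D:quasilift}, $(\vec\tau,\cB)$ is in particular a $U$-quasi-lifting of $\widetilde{\cD}$ in $\cV$. Hypotheses $(1)$ and $(3)$ then let me invoke Lemma~\ref{L:LS2} to conclude that $\cV$ is $\bigl(\Ide(U,\module{\cdot})^=,J,(\kappa)_{\bu\in\Ide(U,\module{\cdot})^=}\bigr)$-L\"owenheim-Skolem. Since $\card\widetilde{\cD}(j)(i)=\card\cD(i,j)<\kappa$ for every $i$ and $j$, Theorem~\ref{T:mainlifting} applies and produces a lifting in $\cV$ of the associated diagram $\widehat{\cD}=\cD\colon I\times J\to\cS$.

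The main point of care is the \jzus\ variant, where one must verify that $\cE(j)\in\cS$ when $\cS$ is closed only under directed unions \emph{preserving} $0$ and $1$. This is immediate: the constant-$0$ and constant-$1$ tuples lie in every $\Cond_U(\vec D,V)$ and coincide with the bounds of the ambient product $\prod_{u\in U}D_{\module{u}}$, so every inclusion $\Cond_U(\vec D,V)\hookrightarrow\Cond_U(\vec D,W)$ with $V\subseteq W$ is a \jzuh. Beyond that, the entire argument is bookkeeping; the genuine content sits in Corollary~\ref{C:Existnormcovering} and Theorem~\ref{T:mainlifting}, both of which are already established.
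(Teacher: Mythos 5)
Your argument follows the paper's own proof almost verbatim: choose the $\kappa$-compatible norm-covering via Corollary~\ref{C:Existnormcovering}, observe that each $\Cond(\widetilde{\cD}(j),U)$ lies in $\cS$ with cardinality $\le\kappa$, apply hypothesis $(4)$ to lift this $J$-indexed diagram of condensates, pass to a $U$-quasi-lifting by the trivial lemma, and finish with Lemma~\ref{L:LS2} and Theorem~\ref{T:mainlifting}. Two small points in your favor: you cite the correct (unnumbered) obvious lemma after Definition~\ref{D:quasilift} where the paper's proof erroneously refers to Lemma~\ref{L:liftingimplyquasilifting}, and you usefully spell out why the condensate inclusions preserve $1$ in the \jzus\ variant, a detail the paper glosses over.
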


\begin{proof}
By Corollary~\ref{C:Existnormcovering} there exists a tight $\kappa$-compatible norm-covering $(U,\module{\cdot})$ of $I$ such that $\card U\le\kappa$. Let $\cD\colon I\times J\to\cS$ be a diagram of \jzs s (resp., \jzus s) such that $\card\cD(i,j)<\kappa$ for all $i\in I$ and all $j\in\Ob J$. This functor corresponds to a functor $\widetilde{\cD}\colon J\to\cS^I$. But: 
\[\card\Cond(\widetilde{\cD}(j),U) \le \sum_{V\in\Powf{U}}\card\prod_{u\in V}\cD(\module{u},j)\le \sum_{V\in\Powf{U}}\kappa\le\kappa,\]
for all $j\in\Ob J$. Moreover $\Cond(\widetilde{\cD}(-),U)$ is a diagram of \jzs s (resp., \jzus s) of $\cS$. So $\Cond(\widetilde{\cD}(-),U)$ has a lifting $\cA\colon J\to\cV$, and, by Lemma~\ref{L:liftingimplyquasilifting}, $\cA\colon J\to\cV$ is a $U$-quasi-lifting of $\widetilde{\cD}$. Moreover, by Lemma~\ref{L:LS2},~$\cV$ is $(\Ide(U,\module{\cdot})^=,J,(\kappa)_{\bu\in\Ide(U,\module{\cdot})^=})$-L\"owenheim-Skolem. Hence, by Theorem \ref{T:mainlifting},~$\cD$ has a lifting in~$\cV$.
\end{proof}

By an easy induction argument we obtain the following:

\begin{corollary}
Let $\cS$ be a class of \jzs s \textup(resp., \jzus s\textup) closed under finite products and directed unions \textup(resp., directed unions preserving $0$ and $1$\textup), let $\sL$ be a similarity type, let~$\cV$ be a variety of $\sL$-algebras, let $\kappa$ be an infinite cardinal, let $I_1,I_2,\dots,I_n$ be well-founded trees, and let~$J$ be a category, such that:
\begin{enumerate}
\item $\cV$ is locally $\le\kappa$.
\item $\card I_1 + \card I_2 + \dots + \card I_{n-1}  + \card\Mor J\le\kappa$.
\item $\card I_n \le\kappa^+$.
\item $\card \dnw i\le\kappa$ for each $i\in I_n$.
\item Every diagram of \jzs s \textup(resp., \jzus s\textup) $\cD\colon J\to\cS$, such that $\card\cD(j)\le\kappa^{+n}$, has a lifting in~$\cV$.
\end{enumerate}
Then every diagram of \jzs s \textup(resp., \jzus s\textup) $\cD\colon I_1\times I_2\times \dots\times I_n\times J\to\cS$, such that $\card\cD(i_1,i_2,\dots,i_n,j)\le\kappa$ for all $(i_1,i_2,\dots,i_n,j) \in I_1\times I_2\times\dots\times I_n\times\Ob J$, has a lifting in~$\cV$.
\end{corollary}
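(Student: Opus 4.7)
The plan is to iterate the preceding single-tree corollary $n$ times, peeling off one well-founded tree at each step and thereby lowering the ambient cardinal by one successor. Precisely, I shall prove by induction on $k\in\{0,1,\dots,n\}$ the auxiliary statement
\[
(P_k)\colon\quad\text{every } \cD\colon I_1\times\dots\times I_k\times J\to\cS \text{ with } \card\cD(i_1,\dots,i_k,j)\le\kappa^{+(n-k)} \text{ has a lifting in } \cV.
\]
The base case $(P_0)$ is exactly hypothesis $(5)$, and the target $(P_n)$ is the conclusion of the corollary (its indexing category is $I_1\times\dots\times I_n\times J$ and the size bound is $\kappa^{+0}=\kappa$). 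Each inductive step will be a single application of the preceding corollary.

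For the inductive step from $(P_{k-1})$ to $(P_k)$, I invoke the preceding corollary with ambient cardinal $\kappa^{+(n-k+1)}$ and smaller cardinal $\lambda:=\kappa^{+(n-k)}$ (permissible since $\cV$ is locally $\le\kappa\le\lambda$), tree $I:=I_k$, and auxiliary category $J':=I_1\times\dots\times I_{k-1}\times J$. Condition $(4)$ of that corollary, which demands that every diagram on $J'$ of size $\le\kappa^{+(n-k+1)}$ lifts, is precisely $(P_{k-1})$. The other hypotheses reduce to cardinal arithmetic: condition $(1)$ reads $\kappa^{+(n-k)}+\card\Mor J'<\kappa^{+(n-k+1)}$ and holds because $\card\Mor J'\le\prod_{\ell<k}\card\Mor I_\ell\cdot\card\Mor J\le\kappa$ by hypothesis $(2)$ (a tree $I_\ell$ has $\card\Mor I_\ell\le\card I_\ell^{\,2}$); condition $(2)$ holds because $\card I_k\le\kappa\le\kappa^{+(n-k+1)}$ for $k<n$ and reduces to our hypothesis $(3)$ when $k=n$; condition $(3)$ holds because $\card(\dnw i)\le\card I_k\le\kappa<\cf\kappa^{+(n-k+1)}$ for $k<n$ and, when $k=n$, becomes exactly our hypothesis $(4)$ using $\cf\kappa^+=\kappa^+$. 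The conclusion of the preceding corollary then yields $(P_k)$.

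The argument is essentially mechanical once the bookkeeping is in place. The only point that needs care is the order in which the trees are processed: the small trees $I_1,\dots,I_{n-1}$ must be peeled off first, at large cardinals where the cofinality and cardinality conditions on $I$ are automatic, so that the large tree $I_n$ can be handled last, at the base successor $\kappa^+$, where the requirement $\card(\dnw i)<\cf\kappa^+=\kappa^+$ coincides exactly with hypothesis $(4)$ of the present statement. The closure assumptions on $\cS$ are used tacitly, since each intermediate statement $(P_k)$ concerns diagrams into the same class $\cS$ and the preceding corollary is applied within this class. Beyond this bookkeeping, I anticipate no further obstacle.
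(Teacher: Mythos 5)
Your proposal is correct and is precisely the "easy induction argument" the paper gestures at without spelling out: the large tree $I_n$ is peeled off last at the base successor $\kappa^+$, and the small trees $I_1,\dots,I_{n-1}$ are processed at higher successors where the cardinality and cofinality conditions on the tree are automatic. The cardinal bookkeeping (regularity of successors, $\card\Mor(I_1\times\dots\times I_{k-1}\times J)\le\kappa$, and the shift from $<\kappa^{+(n-k+1)}$ to $\le\kappa^{+(n-k)}$) is all handled correctly.
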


\begin{corollary}\label{C:AllsemilatticesToAllDiagramProductOfTrees}
Let $\cS$ be a class of \jzs s \textup(resp., \jzus s\textup) closed under finite products and directed unions \textup(resp., directed unions preserving 0, and 1\textup), let $\sL$ be a similarity type, let~$\cV$ be a variety of $\sL$-algebras. If every $S\in\cS$ has a lifting in~$\cV$, then every diagram of \jzs s \textup(resp., \jzus s\textup) of $\cS$, indexed by a finite product of well-founded trees, has a lifting in~$\cV$.
\end{corollary}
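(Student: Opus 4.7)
The plan is to apply the preceding corollary with~$J$ chosen as the trivial category (one object, one identity morphism). Under this choice, $I_1\times\cdots\times I_n\times J$ is canonically isomorphic to $I_1\times\cdots\times I_n$, and a functor $J\to\cS$ is nothing but an object of~$\cS$; hypothesis~(5) of the preceding corollary therefore reduces exactly to our assumption that every $S\in\cS$ has a lifting in~$\cV$.

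First I would fix well-founded trees $I_1,\ldots,I_n$ and a diagram $\cD\colon I_1\times\cdots\times I_n\to\cS$, and then choose an infinite cardinal
\[
\kappa\ge\card\sL+\aleph_0+\sum_{k=1}^{n}\card I_k+\sup\setm{\card\cD(i_1,\ldots,i_n)}{(i_1,\ldots,i_n)\in I_1\times\cdots\times I_n}.
\]
Since~$\cV$ is a variety of $\sL$-algebras, it is locally $\le\card\sL+\aleph_0\le\kappa$, so hypothesis~(1) holds. Hypothesis~(2) holds because each $\card I_k\le\kappa$ and $\card\Mor J=1$. Hypothesis~(3) is immediate from $\card I_n\le\kappa\le\kappa^+$, and hypothesis~(4) holds because $\card(I_n\dnw i)\le\card I_n\le\kappa$ for every $i\in I_n$. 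Moreover, the size restriction $\card\cD(i_1,\ldots,i_n)\le\kappa$ appearing in the conclusion of the preceding corollary is also guaranteed by the choice of~$\kappa$. Applying the preceding corollary produces the desired lifting of~$\cD$ in~$\cV$, and the \jzus\ version follows by the same argument with the \jzus\ reading of the preceding corollary.

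There is essentially no obstacle: the real content---the construction of condensates, the quasi-lifting apparatus, and the L\"owenheim-Skolem-style Theorem~\ref{T:mainlifting} together with its corollary on products of trees---was developed before this statement, and trivializing the parameter~$J$ is merely a packaging step.
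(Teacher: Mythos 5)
Your proof is correct and follows the intended route: the paper states this corollary immediately after the preceding unnamed corollary without supplying a separate argument, so the expected proof is precisely to specialize $J$ to the trivial category and choose $\kappa$ large enough to dominate $\card\sL+\aleph_0$, the trees, and the objects of the given diagram, exactly as you do. The verification of hypotheses (1)--(5) and the reduction of a functor from the one-object category to an object of $\cS$ are all sound.
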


Using the result of Lampe in \cite{Lamp82}, that is, every \jzus\ is the compact congruence semilattice of a groupoid, we obtain a generalization of his result of simultaneous representation in \cite{Lamp06}, to all diagrams of \jzus s indexed by a finite poset.

\begin{corollary}\label{C:generalizedLamp}
Let $\cS$ be the category of all \jzus s with $(\vee,0,1)$-homomorphisms, let $I$ be a finite poset. Then every diagram $\cD\colon I\to\cS$ has a lifting in the variety of all groupoids.
\end{corollary}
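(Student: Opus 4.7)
The plan is to combine Lampe's theorem with Corollary~\ref{C:AllsemilatticesToAllDiagramProductOfTrees} by means of a left Kan extension along an embedding of the finite poset $I$ into a finite product of well-founded trees. Since every \jzus\ is $\Conc G$ for some groupoid $G$ (Lampe), every member of the class $\cS$ of all \jzus s has a lifting in the variety $\cG$ of all groupoids; and since $\cS$ is closed under finite products and under directed unions preserving $0$ and $1$, Corollary~\ref{C:AllsemilatticesToAllDiagramProductOfTrees} already delivers liftings for all diagrams of $\cS$ indexed by a finite product of well-founded trees. So it suffices to fit an arbitrary finite-poset-indexed diagram into such a framework.

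For the embedding, I would use the order-embedding $\iota\colon I\into\two^I$, $i\mapsto \dnw i$, where $\two$ denotes the two-element chain. Since $\two$ is a (finite) well-founded tree, $\two^I$ is a finite product of well-founded trees. Viewed as a functor between posets-as-categories, $\iota$ is fully faithful. The category $\cS$ is cocomplete (it is the category of algebras of a variety), so one can form the pointwise left Kan extension $\cD'\colon\two^I\to\cS$ of $\cD$ along $\iota$: explicitly, $\cD'(a)=\varinjlim_{i\in I,\ \iota(i)\le a}\cD(i)$. Full faithfulness of $\iota$ ensures that for $a=\iota(i_0)$ the comma category has $i_0$ as terminal object, so the unit $\cD\to\cD'\circ\iota$ is a natural isomorphism.

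Applying the reduction from the first paragraph to $\cD'$ yields a lifting $\cA'\colon\two^I\to\cG$ of $\cD'$ in the variety of groupoids. Then $\cA'\circ\iota\colon I\to\cG$ satisfies
\[
\Conc(\cA'\circ\iota)\;=\;(\Conc\cA')\circ\iota\;\cong\;\cD'\circ\iota\;\cong\;\cD,
\]
which is the desired lifting. The only point that genuinely requires checking is the Kan extension step: one must verify that the colimits of \jzus s along \jzuh s used to define $\cD'$ stay in $\cS$ (hence $\cD'$ is a diagram in $\cS$, not merely in semilattices), and that the unit $\cD\to\cD'\circ\iota$ is a natural isomorphism. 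Both are standard consequences of the cocompleteness of $\cS$ and the full faithfulness of $\iota$, so no serious obstacle arises, and the corollary follows.
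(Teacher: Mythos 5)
Your proposal is correct and follows essentially the same route as the paper's proof: extend $\cD$ from $I$ to the power set $\Pow(I)\cong\two^{\card I}$ by taking colimits, apply Corollary~\ref{C:AllsemilatticesToAllDiagramProductOfTrees} (with Lampe's theorem supplying the object-level hypothesis) to the extended diagram, and restrict the resulting lifting back along $i\mapsto I\dnw i$. The only cosmetic difference is that the paper defines the extension by $X\mapsto\varinjlim_{i\in X}\cD(i)$ rather than the left Kan extension formula $X\mapsto\varinjlim_{\dnw i\subseteq X}\cD(i)$; these differ on subsets that are not lower sets, but both restrict to $\cD$ on principal ideals, so both work.
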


\begin{proof}
We denote by~$\cV$ the variety of all groupoids. Remember that~$\cV$ has all small colimits (cf. Section~\ref{S:Basic}). For $I=2^n$, for a positive integer~$n$, the result follows from Corollary~\ref{C:AllsemilatticesToAllDiagramProductOfTrees}. Now let $I$ be an arbitrary finite poset and let $\cD\colon I\to\cS$ be a diagram of \jzus s. Put $S_X=\varinjlim(\cD\res X)=\varinjlim_{i\in X}\cD(i)$, for each $X\in\Pow(I)$. Let $s_{X,Y}\colon S_X\to S_Y$ be the canonical morphism, for all $X\subseteq Y\subseteq I$. Then 
\begin{align*}
\cD'\colon \Pow(I) &\to \cS\\
X &\mapsto S_X, & & \text{for all $X\in \Pow(I)$}\\
(X\subseteq Y)& \mapsto s_{X,Y}, & & \text{for all $X\subseteq Y \subseteq I$}
\end{align*}
is a functor. As $\Pow(I)\cong 2^I$, there exists a lifting $\cA'\colon \Pow(I)\to\cV$ of $\cD'$. Moreover, as $S_{I\dnw i}=\cD(i)$ and $s_{I\dnw i, I\dnw j}=\cD(i\le j)$ for all $i\le j$ in $I$, the functor
\begin{align*}
\cA\colon I &\to \cV\\
i &\mapsto \cA'(I\dnw i), & & \text{for all $i\in I$}\\
(i\le j)& \mapsto \cA'(I\dnw i\subseteq I\dnw j), & & \text{for all $i\le j \in I$}
\end{align*}
is a lifting of~$\cD$.
\end{proof}

In particular, consider the diagram denoted by~$\cD_{\bowtie}$ in~\cite{Bowtie}. This diagram is a diagram of finite Boolean semilattices and \jzue s; it is indexed by the bounded poset with atoms $a_i$ and coatoms~$b_i$, for $i<3$, and $a_i<b_j$ for all $i,j<3$. It is proved in~\cite{Bowtie} that this diagram does not have any congruence-lifting in any variety of algebras satisfying a nontrivial congruence lattice identity. It was not known at that time whether~$\cD_{\bowtie}$ was congruence-liftable by groupoids. So, by Corollary~\ref{C:generalizedLamp}, this is the case.

Define a \emph{quasi-partition} of a set $X$ as a family $(Y_k)_{k\in K}$ of subsets of $X$ such that $X=\bigcup_{k\in K}Y_k$ and $Y_k\cap Y_l=\emptyset$ for all $k\not =l$ in $K$ (we do not require the~$Y_k$s to be nonempty).

The following result is a compactness-type property for liftings of diagrams.

\begin{theorem}\label{T:Compactness}
Let $\cS$ be the class of all distributive \jzs s, let~$\cV$ be a finitely generated congruence-distributive variety of algebras, let~$J$ be a small category, such that there are at most finitely many arrows between any two objects, let $\cD\colon J\to\cS$ be a functor such that $\cD(j)$ is finite for all $j\in J$. If every finite subdiagram of~$\cD$ has a lifting in~$\cV$, then~$\cD$ has a lifting in~$\cV$.
\end{theorem}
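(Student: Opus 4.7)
The plan is a compactness argument, in the spirit of K\"onig's lemma. The essential input is Lemma~\ref{L:conc-on-CD-FG-variety-is-one-to-one}: since $\cV$ is finitely generated and congruence-distributive and each $\cD(j)$ is finite, only finitely many algebras $A\in\cV$, up to isomorphism, satisfy $\Conc A\cong\cD(j)$, and every such $A$ is finite. I would fix, for each $j\in\Ob J$, a finite set $R_j$ of representative algebras and then a finite set $\tilde R_j$ of pointed representatives $(A,\iota)$ with $A\in R_j$ and $\iota\colon\Conc A\to\cD(j)$ an isomorphism; $\tilde R_j$ is finite because $\cD(j)$ is finite, so $\Aut\cD(j)$ is finite. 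For each $f\colon j\to j'$ in $\Mor J$ and each choice of $(A,\iota)\in\tilde R_j$, $(A',\iota')\in\tilde R_{j'}$, the set of homomorphisms $h\colon A\to A'$ with $\iota'\circ\Conc h=\cD(f)\circ\iota$ is finite, because $A$ and $A'$ are.

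I would assemble these data into a product space $X=\prod_{j\in\Ob J}\tilde R_j\times\prod_{f\in\Mor J}H_f$, where each factor is a finite discrete space and $H_f$ lists, across all compatible quadruples, an admissible arrow $h_f$. By Tychonoff's theorem, $X$ is compact. For every full subcategory $F$ of $J$ with finitely many objects (hence, by the finite-hom-set assumption on $J$, with finitely many arrows), the subset $\Phi_F\subseteq X$ singling out assignments that are functorial on $F$ (i.e.\ satisfy $h_{\id_j}=\id$ for $j\in\Ob F$ and $h_{g\circ f}=h_g\circ h_f$ for composable pairs in $F$, and have the coordinate-consistency condition that the endpoints of $h_f$ match the objects assigned to its source and target) is closed, since each defining condition involves only finitely many coordinates.

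Each $\Phi_F$ is nonempty: by hypothesis $\cD\res F$ admits a lifting $\cA\colon F\to\cV$ with a natural isomorphism $\eta\colon\Conc\circ\cA\to\cD\res F$; picking an isomorphism of each $\cA(j)$ with some $A_j\in R_j$ and transporting $\eta_j$ through it yields pointed representatives and compatible homomorphisms which, by naturality of $\eta$, define a point of $\Phi_F$. The family $\{\Phi_F\}$ has the finite intersection property, because for finitely many full finite subcategories $F_1,\dots,F_n$, the full subcategory of $J$ spanned by $\Ob F_1\cup\cdots\cup\Ob F_n$ is again finite and contains each $F_i$. Compactness of $X$ then supplies a point in $\bigcap_F\Phi_F$, and reading off the coordinates gives a functor $\cA\colon J\to\cV$ together with isomorphisms $\iota_j\colon\Conc\cA(j)\to\cD(j)$ that are natural in $j$, hence a lifting of $\cD$ in $\cV$.

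The main subtlety I expect to handle carefully is the bookkeeping of the isomorphisms $\iota_j\colon\Conc A_j\to\cD(j)$: they must be included as part of the state being compactified, so that the compatibility condition along each arrow becomes a literal equation among finitely many coordinates in a product of finite discrete sets rather than a condition holding only up to unspecified automorphisms of $\cD(j)$. Once that is set up, and one uses the finite-hom-set assumption to ensure that the full subcategory spanned by a finite object set is genuinely a finite subdiagram of $\cD$, the remaining verification reduces to the standard finite-intersection-plus-Tychonoff argument.
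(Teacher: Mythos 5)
Your proof is correct and is essentially the paper's argument: both rest on Lemma~\ref{L:conc-on-CD-FG-variety-is-one-to-one} to bound the possible liftings of each $\cD(j)$ and then use a compactness argument to stitch together the liftings of the finite subdiagrams. The paper implements the compactness step with an ultrafilter on $\Powf{\Ob J}$ (choosing, for each object, arrow, and natural-isomorphism component, the unique block of a finite quasi-partition that lies in the ultrafilter), while you use Tychonoff's theorem and the finite intersection property on a product of finite discrete spaces; these are interchangeable presentations of the same idea, and your insistence on carrying the isomorphisms $\iota_j$ as explicit coordinates corresponds exactly to the paper's step of selecting the maps $\phi_j$ via the ultrafilter.
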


\begin{proof}
Let $(K_j)_{j\in J}$ be a family of finite subsets of~$\cV$ such that if $\Conc A$ is isomorphic to $\cD(j)$, for some $A\in \cV$ and $j\in\Ob J$, then~$A$ is isomorphic to an element of~$K_j$.

For every finite subset $I$ of $\Ob J$, we denote by $\overline{I}$ the full subcategory of $J$ with class of objects $I$. Let $\cA_I\colon \overline{I}\to \cV$ be a functor and let $\xi_I=(\xi_I^i)_{i\in \Ob I}\colon\Conc\circ\cA\to\cD\res I$ be a natural isomorphism. We can assume that $\cA_I(i)\in K_i$ for all $i\in I$.

Put $Q_S=\setm{P\in\Powf{\Ob J}}{S\subseteq P}$, and denote by $\fF$ the filter on $\Powf{\Ob J}$ generated by $\setm{Q_S}{S\in\Powf{\Ob J}}$. As $Q_{S_1}\cap Q_{S_2}=Q_{S_1\cup S_2}$ for all $S_1,S_2\in\Powf{\Ob J}$, the filter~$\fF$ is proper, so there exists an ultrafilter $\fU$ such that $\fF\subseteq \fU$.

Let $j\in\Ob J$. The family $(\setm{P\in Q_{\set{j}}}{\cA_P(j)=A})_{A\in K_j}$ is a finite quasi-partition of~$Q_{\set{j}}$, so there exists a unique $A_j\in K_j$ such that $R_j=\setm{P\in Q_{\set{j}}}{\cA_P(j)=A_j}$ belongs to $\fU$.

Let $f\colon i\to j$ be an arrow of $J$. The family $(\setm{P\in R_i\cap R_j}{\cA_P(f)=s})_{s\colon A_i\to A_j}$ is a finite quasi-partition of $R_i\cap R_j\in\fU$, so there exists a unique $s_f\colon A_i\to A_j$ such that $S_f=\setm{P\in R_i\cap R_j}{\cA_P(f)=s_f}$ belongs to $\fU$.

Let $i\in \Ob J$, let $P\in S_{\id_i}$, so $\cA_P(i)=A_i$ and $\id_{A_i}=\id_{\cA_P(i)}=\cA_P(\id_i)=s_{\id_i}$. Let $f\colon i\to j$ and $g\colon j\to k$ be two arrows of $J$, let $P\in S_f\cap S_g\cap S_{g\circ f}$. So $\cA_P(i)=A_i$, $\cA_P(j)=A_j$, and $\cA_P(k)=A_k$. Moreover:
\[s_g\circ s_f=\cA_P(g)\circ\cA_P(f)=\cA_P(f\circ g)=s_{g\circ f}.\]
Thus we obtain a functor:
\begin{align*}
\cA\colon J & \to \cV\\
i & \mapsto A_i & &\text{for all $i\in\Ob J$}\\
f & \mapsto s_f & &\text{for all $f\in\Mor J$}
\end{align*}

For each $j\in\Ob J$, the family $(\setm{P\in R_j}{\xi_P^j=\phi})_{\phi\colon \Conc\cA(j)\to\cD(j)}$ is a finite quasi-partition of $R_j$, so there exists a unique $\phi_j\colon \Conc\cA(j)\to\cD(j)$ such that the set $T_j=\setm{P\in R_j}{\xi_P^j=\phi_j}$ belongs to $\fU$.

Let $f\colon i\to j$ be an arrow of $J$, let $P\in S_f\cap T_i\cap T_j$. So the following equalities hold:
\[\phi_j\circ\Conc\cA(f) = \xi_P^j\circ\Conc\cA_P(f)=\cD(f)\circ\xi_P^i=\cD(f)\circ\phi_i,\]
and so $(\phi_j)_{j\in\Ob J}\colon \Conc\circ\cA\to \cD$ is a natural isomorphism. Thus~$\cD$ has a lifting in~$\cV$.
\end{proof}

The following corollary gives us, in particular, a characterization of all pairs of finitely generated congruence-distributive varieties with uncountable critical point.

\begin{corollary}\label{C:caract-crit-point-aleph0-inFGCDV}
Let~$\cV_1$ be a locally finite variety, let~$\cV_2$ be a finitely generated congruence-distributive variety. Then the following statements are equivalent:
\begin{enumerate}
\item $\crit{\cV_1}{\cV_2}>\aleph_0$.
\item Every diagram of finite \jzs s indexed by a tree which has a lifting in~$\cV_1$ has a lifting in~$\cV_2$.
\item Every diagram of finite \jzs s indexed by a finite chain which has a lifting in~$\cV_1$ has a lifting in~$\cV_2$.
\end{enumerate}
\end{corollary}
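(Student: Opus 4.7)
My plan is to establish $(2)\Rightarrow(3)\Rightarrow(1)\Rightarrow(2)$. The implication $(2)\Rightarrow(3)$ is immediate, since any finite chain is a tree. The two nontrivial implications will combine Corollary~\ref{C:lifting-inFGCDvariety-with-alephn}, the compactness Theorem~\ref{T:Compactness}, and the L\"owenheim-Skolem Lemma~\ref{L:LS}.

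For $(3)\Rightarrow(1)$, I will start from a countable $S\in\Conc\cV_1$, say $S\cong\Conc A$ with $A\in\cV_1$. Because $\cV_1$ is locally finite, hence locally $\le\aleph_0$, Lemma~\ref{L:LS} applied to the single-object diagram $A$ with identity congruence will yield a countable subalgebra $B$ of $A$ with $\Conc B\cong\Conc A\cong S$. Local finiteness then lets me write $B$ as the directed union of an increasing chain $(B_n)_{n<\omega}$ of finite subalgebras, all in $\cV_1$. Applying~(3) to each one-element chain $\Conc B_n$ will give $\Conc B_n\in\Conc\cV_2$, hence distributive; applying~(3) to each finite restriction of the $\omega$-chain $(\Conc B_n)_{n<\omega}$, which is liftable in $\cV_1$ via the $B_n$'s, will give a lifting in $\cV_2$. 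Theorem~\ref{T:Compactness} then delivers a lifting $\cB\colon\omega\to\cV_2$ of the whole $\omega$-diagram. Since $\cV_2$ is a variety (closed under small colimits) and $\Conc$ preserves direct limits, $\varinjlim\cB\in\cV_2$ will satisfy $\Conc(\varinjlim\cB)\cong\varinjlim\Conc B_n\cong\Conc B\cong S$.

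For $(1)\Rightarrow(2)$, I will take a diagram $\cD$ of finite \jzs s indexed by a tree $T$ with a lifting in $\cV_1$. Each value $\cD(t)\in\Conc\cV_1$ is finite, so has cardinality $<\aleph_1\le\crit{\cV_1}{\cV_2}$, whence $\cD(t)\in\Conc\cV_2$ and is distributive. Corollary~\ref{C:lifting-inFGCDvariety-with-alephn} applied with $n=0$ directly handles the case where $T$ is a finite tree (any finite tree being a lower finite countable tree). For a general tree $T$ I will reduce to the finite case by compactness: given a finite $F\subseteq T$, the set $F'=F\cup\{\bot\}$, where $\bot$ is the root of $T$, is a finite subtree of $T$, since $F'\dnw t\subseteq T\dnw t$ is a chain for every $t\in F'$; hence $\cD\res F'$ lifts in $\cV_2$, and restricting further provides a lifting of the arbitrary finite subdiagram $\cD\res F$. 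Since a poset has at most one arrow between any two elements, Theorem~\ref{T:Compactness} then applies and produces the desired lifting of $\cD$ in $\cV_2$.

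The principal obstacle I anticipate is the distributivity requirement of Theorem~\ref{T:Compactness}: in both nontrivial implications the finite \jzs s occurring in the diagrams fed to that theorem must be verified to be distributive, and this is ensured in each case by the fact that the governing hypothesis forces these semilattices into $\Conc\cV_2$, a congruence-distributive class. The only other delicate point, in $(1)\Rightarrow(2)$, is that a finite subposet of a tree need not itself be a tree; this is trivially remedied by adjoining the root before restricting.
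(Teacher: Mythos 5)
Your proof is correct and follows essentially the same route as the paper's: use the $\aleph_0$-instance of the condensate/L\"owenheim-Skolem corollaries to handle finite tree diagrams, then Theorem~\ref{T:Compactness} to pass to arbitrary trees for $(1)\Rightarrow(2)$; and L\"owenheim-Skolem plus preservation of direct limits by $\Conc$ for $(3)\Rightarrow(1)$. You are in fact slightly more careful than the printed proof in two places it leaves implicit, namely verifying that the finite semilattices fed to Theorem~\ref{T:Compactness} are distributive (because they land in $\Conc\cV_2$), and observing that a finite subposet of a tree must first be closed up to a finite subtree (by adjoining the root) before the finite-tree result applies.
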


\begin{proof}
If $(1)$ holds, then by Corollary~\ref{C:lifting-inFGCDvariety} every diagram of finite \jzs s indexed by a finite tree which has a lifting in~$\cV_1$ has a lifting in~$\cV_2$. Thus, by Theorem~\ref{T:Compactness}, the statement~$(2)$ holds.

Now assume that $(3)$ holds. By Theorem~\ref{T:Compactness}, every diagram of finite \jzs s indexed by~$\omega$ which has a lifting in~$\cV_1$ has a lifting in~$\cV_2$. Let $D$ be a countable distributive \jzs. Let $A\in\cV_1$ such that $\Conc A\cong D$. Using Lemma~\ref{L:LS} we can assume that~$A$ is countable. So we can write $A=\bigcup_{n\in\omega} A_n$, where each~$A_n$ is a finite subalgebra of~$A$, and $A_m\subseteq A_n$ for all $m\le n$ in~$\omega$. Denote by $f_{m,n}\colon A_m\to A_n$ the inclusion map, for all $m\le n$ in~$\omega$. Put $\cA=((A_n)_{n\in\omega},(f_{m,n})_{m\le n\in\omega})$. So we get a diagram $\cB=((B_n)_{n\in\omega},(g_{m,n})_{m\le n\in\omega})$ in~$\cV_2$ together with a natural isomorphism $\xi\colon \Conc\circ\cA\to\Conc\circ\cB$. Hence, as the $\Conc$ functor preserves direct limits,
\[\Conc A\cong\Conc(\varinjlim \cA)\cong\varinjlim(\Conc\circ \cA)\cong\varinjlim(\Conc\circ \cB)\cong \Conc(\varinjlim\cB).\tag*{\qed}\]
\renewcommand{\qed}{}
\end{proof}

\begin{corollary}\label{C:caract-crit-point-infty-inFGCDV}
Let~$\cV_1$ be a locally finite variety, let~$\cV_2$ be a finitely generated congruence-distributive variety. Then the following statements are equivalent:
\begin{enumerate}
\item $\crit{\cV_1}{\cV_2}\ge\aleph_\omega$.
\item Every diagram of finite \jzs s indexed by $\set{0,1}^n$, for a natural number $n$, which has a lifting in~$\cV_1$ has a lifting in~$\cV_2$.
\item Every diagram of finite \jzs s indexed by a finite \jzs\ which has a lifting in~$\cV_1$ has a lifting in~$\cV_2$.
\item Every diagram of finite \jzs s indexed by a \jzs\ which has a lifting in~$\cV_1$ has a lifting in~$\cV_2$.
\item Every diagram of \jzs s indexed by a \jzs\ which has a lifting in~$\cV_1$ has a lifting in~$\cV_2$.
\item $\crit{\cV_1}{\cV_2}=\infty$, that is, $\Conc\cV_1\subseteq\Conc\cV_2$.
\end{enumerate}
\end{corollary}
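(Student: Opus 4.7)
The plan is to close the cyclic chain of implications
\[
(1)\Rightarrow(2)\Rightarrow(3)\Rightarrow(4)\Rightarrow(5)\Rightarrow(6)\Rightarrow(1),
\]
using the tools developed earlier in the paper. The easy endpoints are $(6)\Rightarrow(1)$ (just $\infty\ge\aleph_\omega$) and $(5)\Rightarrow(6)$ (apply~(5) to the one-object diagram carrying any $S\in\Conc\cV_1$). For $(1)\Rightarrow(2)$, given~$n$ I would apply Corollary~\ref{C:lifting-inFGCDvariety-with-alephn} with $I_1=\cdots=I_n=\set{0,1}$ (each a finite tree) and $I_{n+1}$ the one-point tree; the hypothesis $\crit{\cV_1}{\cV_2}>\aleph_n$ is available from~(1).

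For $(2)\Rightarrow(3)$, I would mimic the construction used for Corollary~\ref{C:generalizedLamp}: given a lifting $\cA\colon I\to\cV_1$ of a diagram~$\cD$ on a finite \jzs~$I$, extend $\cA$ to $\cA'\colon\Pow(I)\to\cV_1$ by $\cA'(X)=\varinjlim(\cA\res X)$, using small colimits in~$\cV_1$. Because $\Conc$ preserves direct limits and a colimit of finitely many finite \jzs s in the variety of \jzs s is still finite, $\Conc\circ\cA'$ is a finite-valued lifting indexed by $\Pow(I)\cong\set{0,1}^{|I|}$; applying~(2) and restricting along $i\mapsto I\dnw i$ yields the required lifting of~$\cD$, since the colimit along a principal ideal stabilizes at its largest element. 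For $(3)\Rightarrow(4)$, I would invoke Theorem~\ref{T:Compactness} (the hypothesis that at most finitely many arrows connect any two objects is trivial for poset-categories). Given a finite subcategory $K\subseteq I$, the finite sub-\jzs $K'\subseteq I$ generated by $K\cup\set{0_I}$ is again finite, $\cD\res K'$ lifts in~$\cV_2$ by~(3), and restricting yields a lifting of $\cD\res K$; distributivity of each $\cD(j)$, required by Theorem~\ref{T:Compactness}, is a by-product of the existence of such liftings in the congruence-distributive variety~$\cV_2$.

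The main obstacle will be $(4)\Rightarrow(5)$, where we must remove the finiteness assumption on the values of the diagram by exploiting local finiteness of~$\cV_1$. Given a lifting $\cA\colon I\to\cV_1$ of an arbitrary $\cD\colon I\to\cS$, I would form the poset
\[
I^*=\setm{(i,A)}{i\in I\text{ and }A\text{ is a finite subalgebra of }\cA(i)},
\]
ordered by $(i,A)\le(j,B)$ iff $i\le j$ and $\cA(i\le j)(A)\subseteq B$. Local finiteness of~$\cV_1$ ensures that $I^*$ is itself a \jzs, the join of $(i_1,A_1)$ and $(i_2,A_2)$ being $(i_1\vee i_2,A)$ with~$A$ the finite subalgebra of $\cA(i_1\vee i_2)$ generated by the pushed-forward copies of~$A_1$ and~$A_2$, and the least element coming from the trivial subalgebra of $\cA(0_I)$. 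The functor $\cA^*\colon I^*\to\cV_1$ sending $(i,A)$ to~$A$ (with transitions inherited from~$\cA$) has finite values, so $\Conc\circ\cA^*$ is a diagram of finite \jzs s indexed by a \jzs. Applying~(4) produces a lifting $\cB^*\colon I^*\to\cV_2$, and I would then define $\cB(i)=\varinjlim_{A}\cB^*(i,A)$ in~$\cV_2$; preservation of direct limits by $\Conc$ yields a natural isomorphism $\Conc\circ\cB\cong\cD$. The delicate point is verifying functoriality of~$\cB$ and naturality of this isomorphism, which requires coherently handling directed colimits along the fibres of the projection $I^*\to I$, but no essential difficulty is expected.
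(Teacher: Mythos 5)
Your overall strategy coincides with the paper's: a cyclic chain $(1)\Rightarrow(2)\Rightarrow(3)\Rightarrow(4)\Rightarrow(5)\Rightarrow(6)\Rightarrow(1)$, with the same easy implications and the same invocations of Corollary~\ref{C:lifting-inFGCDvariety-with-alephn} and Theorem~\ref{T:Compactness}. The two substantive steps, however, are carried out differently, and this is where I have comments.

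For $(2)\Rightarrow(3)$ the paper does not take colimits at all: since $L$ is a \emph{\jzs}, it simply sets $\cA'(X)=\cA(\bigvee X)$ and $\cD'(X)=\cD(\bigvee X)$, which is automatically a functor on $\Pow(L)$ lifting a finite-valued diagram, and then restricts $\cB'$ along $x\mapsto L\dnw x$. Your colimit construction $\cA'(X)=\varinjlim(\cA\res X)$ also works, but your justification for finiteness is off target. You invoke that $\Conc$ preserves direct limits and that finite colimits of finite \jzs s are finite, but the colimit of $\cA\res X$ is taken in $\cV_1$ over a \emph{non-directed} finite poset, so $\Conc$ need not commute with it, and what must be shown is that $\cA'(X)$ itself is finite. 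That follows because it is generated by the finite union of the images of the finitely many finite $\cA(i)$, hence finitely generated, hence finite by local finiteness of $\cV_1$; once $\cA'(X)$ is finite, so is $\Conc\cA'(X)$. The join-based definition sidesteps this entirely, which is why the paper uses it.

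For $(4)\Rightarrow(5)$ the paper works with the \jzs\ $G$ of \emph{coherent families} $(Q_x)_{x\in L}$ of finite subalgebras with $a\in Q_0$ for a fixed $a\in\cA(0)$, and applies $(4)$ over $G\times L$; writing $\cA'\colon G\times L\to\cV_1$ and then $\cB=\varinjlim\circ\widetilde{\cB'}$ makes functoriality of $\cB$ and the natural isomorphism $\Conc\circ\cB\cong\cD$ a formal consequence of $\Conc$ preserving direct limits. Your pair poset $I^*=\setm{(i,A)}{A\text{ a finite subalgebra of }\cA(i)}$ can be made to work, but as stated it has a genuine gap: there is no least element, because an arbitrary finite subalgebra $A$ of $\cA(i)$ has no reason to contain any fixed seed, and without constants the ``trivial subalgebra'' of $\cA(0_I)$ may be empty and hence not an algebra. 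You must fix $a\in\cA(0)$ and require $\cA(0_I\le i)(a)\in A$ in the definition of $I^*$, exactly as the paper requires $a\in Q_0$; then $(0_I,\langle a\rangle)$ is the least element and $I^*$ is a \jzs. After that repair, your fibrewise colimit $\cB(i)=\varinjlim_A\cB^*(i,A)$ is indeed a directed colimit and the argument goes through, but you still owe the reader the coherence check you defer; the paper's $G\times L$ framing with $\varinjlim\circ\widetilde{(\cdot)}$ is precisely designed to discharge that check for free.
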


\begin{proof}
By Corollary~\ref{C:lifting-inFGCDvariety-with-alephn}, the statement $(1)\Longrightarrow(2)$ holds. By Theorem~\ref{T:Compactness}, the statement $(3)\Longrightarrow(4)$ holds. The statements $(5)\Longrightarrow(6)$ and $(6)\Longrightarrow(1)$ are obvious.
Denote by $\cS$ the class of all distributive \jzs s. Now assume that $(2)$ holds. Let~$L$ be a finite \jzs, let~$\cD$ be a diagram of finite \jzs s indexed by~$L$, let $\cA\colon L\to\cV_1$ be a lifting of~$\cD$. Put:
\begin{align*}
\cD'\colon\Pow(L)&\to \cS\\
X&\mapsto \cD(\bigvee X)\\
X\subseteq Y &\mapsto \cD(\bigvee X\le \bigvee Y).
\end{align*}
This is a functor. Moreover, the functor $\cA'\colon\Pow(L)\to \cV_1$ defined by
\begin{align*}
X&\mapsto \cA(\bigvee X)\\
X\subseteq Y &\mapsto \cA(\bigvee X\le \bigvee Y)
\end{align*}
is a lifting of $\cD'$. So, by $(2)$, there exists a lifting $\cB'\colon\Pow(L)\to \cV_2$ of $\cD'$. Moreover:
\begin{align*}
\cB\colon L & \to\cV_2\\
x & \mapsto\cB'(L\dnw x) & &\text{for all $x\in L$}\\
(x\le y) & \mapsto \cB'(L\dnw x\subseteq L\dnw y) & &\text{for all $x\le y \in L$}
\end{align*}
is a lifting of~$\cD$. This completes the proof of $(3)$.

Now assume $(4)$. Let $L$ be a \jzs, let $\cD\colon L\to \cS$ be a functor, let $\cA\colon L\to\cV_1$ be a lifting of~$\cD$. Fix $a\in\cA(0)$.
Let:
\begin{align*}
G= &\{(Q_x)_{x\in L}\mid \text{$Q_x$ is a finite subalgebra of $\cA(x)$, for all $x\in L$,}\\
&\quad\quad\quad\quad\quad\text{$\cA(x\le y)(Q_x)\subseteq Q_y$, for all $x\le y\in L$, and $a\in Q_0$}
\}
\end{align*}
partially ordered by $(Q_x)_{x\in L}\le (Q'_x)_{x\in L}$ if $Q_x\subseteq Q'_x$ for all $x\in L$. The subalgebra $\langle\cA(0\le x)(a)\rangle_{\cA(x)}$ of $\cA(x)$ generated by $\cA(0\le x)(a)$ is finitely generated, thus finite (because~$\cV_1$ is locally finite). Thus $G$ is a \jzs\ with smallest element $\big(\langle\cA(0\le x)(a)\rangle_{\cA(x)}\big)_{x\in L}$.

Let:
\begin{align*}
\cA'\colon G\times L &\to \cV_1\\
(Q,x) & \mapsto Q_x\\
((Q,x)\le (Q',x')) &\mapsto (\cA(x\le x')\res Q_x\colon Q_x\to Q_y)
\end{align*}
Consider $\widetilde\cA'\colon L\to\cV_1^G$ as defined in Section \ref{S:Basic}. Then:
\[ \varinjlim\widetilde\cA'(x) = \bigcup_{Q\in G}\cA'(Q,x) = \cA(x),\quad\text{for all $x\in L$}\]
\[ \varinjlim\widetilde\cA'(x\le y) = \bigcup_{Q\in G}\cA'((Q,x)\le (Q,y)) = \cA(x\le y),\quad\text{for all $x\le y\in L$}\]
As $\Conc\cA'$ has a lifting in~$\cV_1$, it has also a lifting $\cB'\colon G\times L$ in~$\cV_2$. Let
\[\cB=\varinjlim\circ\widetilde\cB'\colon L\to\cV_2.\]
As $\Conc$ preserves direct limits, the following natural isomorphisms hold:
\begin{align*}
\cD &\cong \Conc\circ\cA\\
&\cong \Conc\circ\varinjlim\circ \widetilde\cA'\\
&\cong \varinjlim\circ\Conc\circ \widetilde\cA'\\
&\cong \varinjlim\circ\Conc\circ \widetilde\cB'\\
&\cong \Conc\circ\varinjlim\circ \widetilde\cB'\\
&\cong \Conc\circ\cB.\tag*{\qed}
\end{align*}
\renewcommand{\qed}{}
\end{proof}

\begin{corollary}\label{C:critpointifFGCDVisalephn}
Let~$\cV_1$ be a locally finite variety, let~$\cV_2$ be a finitely generated congruence-distributive variety. Then exactly one of the following statements holds:
\begin{enumerate}
\item $\crit{\cV_1}{\cV_2}$ is finite.
\item $\crit{\cV_1}{\cV_2}=\aleph_n$, for some natural number $n$.
\item $\crit{\cV_1}{\cV_2}=\infty$, that is, $\Conc\cV_1\subseteq\Conc\cV_2$.
\end{enumerate}
\end{corollary}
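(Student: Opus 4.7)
The plan is to observe that this corollary is essentially a packaging of Corollary~\ref{C:caract-crit-point-infty-inFGCDV}, which already establishes the ``gap'' between the $\aleph_n$'s and~$\infty$ that forbids the critical point from taking values in $[\aleph_\omega,\infty)$. So no new constructions are needed; the proof will be a case analysis based on the value of $\crit{\cV_1}{\cV_2}$.

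First I would check that the three cases are pairwise mutually exclusive, which is immediate: a finite cardinal is not of the form $\aleph_n$, the symbol $\infty$ is by convention distinct from every cardinal, and the $\aleph_n$'s are all distinct from one another. It therefore suffices to show that at least one of (1), (2), (3) holds for every pair $(\cV_1,\cV_2)$ satisfying the hypotheses.

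Next I would split on the nature of $\crit{\cV_1}{\cV_2}$, which by definition is either a cardinal (finite or infinite) or the symbol $\infty$. If it is a finite cardinal, case~(1) holds and we are done; if it equals $\infty$, case~(3) holds and we are done. The only remaining possibility is that $\crit{\cV_1}{\cV_2}$ is an infinite cardinal, and in particular $\crit{\cV_1}{\cV_2}\neq\infty$.

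In that last subcase I would invoke Corollary~\ref{C:caract-crit-point-infty-inFGCDV}, specifically the equivalence of its conditions~(1) and~(6): since $\crit{\cV_1}{\cV_2}\neq\infty$, the contrapositive forces $\crit{\cV_1}{\cV_2}<\aleph_\omega$. Combined with the standing assumption that $\crit{\cV_1}{\cV_2}\geq\aleph_0$, this yields $\aleph_0\leq\crit{\cV_1}{\cV_2}<\aleph_\omega$, so $\crit{\cV_1}{\cV_2}=\aleph_n$ for some natural number~$n$, which is case~(2). There is no genuine obstacle here: all of the analytic content (the locally finite / finitely generated congruence-distributive hypotheses and the L\"owenheim--Skolem/condensate machinery) has already been absorbed into Corollary~\ref{C:caract-crit-point-infty-inFGCDV}, and the present statement is just a clean reformulation of the dichotomy ``$<\aleph_\omega$ or $=\infty$'' in the form of a trichotomy.
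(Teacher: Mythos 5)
Your proposal is correct, and it takes the same route the paper implicitly intends: the paper states Corollary~\ref{C:critpointifFGCDVisalephn} without proof precisely because it is an immediate consequence of the equivalence $(1)\Leftrightarrow(6)$ in Corollary~\ref{C:caract-crit-point-infty-inFGCDV}, which rules out the window $[\aleph_\omega,\infty)$ for the critical point. Your three-way case split, with the contrapositive of that equivalence handling the remaining infinite-cardinal subcase, is exactly the intended argument.
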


\section{A pair of varieties with critical point $\aleph_1$}\label{S:critpointaleph1}

\begin{lemma}\label{L:ConABoolean}
Let~$A$ be a finite algebra with $\Con A$ distributive, let $\alpha\in\Con A$, and put $Q=\setm{\theta\in\M(\Con A)}{\alpha\not\le\theta}$. If all $A/\theta$, for $\theta\in Q$, are simple, then the canonical map $\Con A\to\Con(A/\alpha)\times\prod_{\theta\in Q}\Con(A/\theta)$ is an isomorphism.
\end{lemma}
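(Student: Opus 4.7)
The key preliminary observation is that, since $A/\theta$ is simple for every $\theta\in Q$, the interval $[\theta,1]$ of $\Con A$ equals $\{\theta,1\}$: each $\theta\in Q$ is a coatom. Consequently $\alpha\vee\theta = 1$ for every $\theta\in Q$ (because $\alpha\not\le\theta$ forces $\alpha\vee\theta>\theta$), and $\theta\vee\theta' = 1$ for distinct $\theta,\theta'\in Q$ (otherwise $\theta\in[\theta',1]=\{\theta',1\}$ would force $\theta\in\{\theta',1\}$, both excluded). Set $P=\{\theta\in\M(\Con A)\mid \alpha\le\theta\}$, so that $\M(\Con A)=P\sqcup Q$. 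Identifying $\Con(A/\alpha)$ with $[\alpha,1]$ and each $\Con(A/\theta)$ with $[\theta,1]=\{\theta,1\}$, the canonical map becomes the lattice homomorphism
\[
f\colon \Con A\longrightarrow [\alpha,1]\times\prod_{\theta\in Q}\{\theta,1\},\qquad \beta\longmapsto\bigl(\beta\vee\alpha,\,(\beta\vee\theta)_{\theta\in Q}\bigr).
\]

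For injectivity, suppose $f(\beta)=f(\gamma)$. Then for any $m\in\M(\Con A)$: if $m\in P$, using $\alpha\le m$ one gets $\beta\le m \iff \beta\vee\alpha\le m \iff \gamma\vee\alpha\le m \iff \gamma\le m$; if $m\in Q$, one gets $\beta\le m \iff \beta\vee m=m \iff \gamma\vee m=m \iff \gamma\le m$. Since $\Con A$ is finite and distributive, every element is the meet of the meet-irreducibles above it, whence $\beta=\gamma$.

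For surjectivity, given $(\bar\beta,(t_\theta)_{\theta\in Q})$ in the codomain, let $Q'=\{\theta\in Q\mid t_\theta=\theta\}$ and set
\[
\beta=\bar\beta\wedge\bigwedge_{\theta\in Q'}\theta.
\]
By distributivity, $\beta\vee\alpha = (\bar\beta\vee\alpha)\wedge\bigwedge_{\theta\in Q'}(\theta\vee\alpha) = \bar\beta\wedge 1 = \bar\beta$, and for any $\theta_0\in Q$,
\[
\beta\vee\theta_0 = (\bar\beta\vee\theta_0)\wedge\bigwedge_{\theta\in Q'}(\theta\vee\theta_0) = 1\wedge\bigwedge_{\theta\in Q'}(\theta\vee\theta_0),
\]
which equals $\theta_0$ if $\theta_0\in Q'$ (the factor at $\theta_0$ is $\theta_0$, all others are $1$) and $1$ otherwise. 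This completes the verification that $f$ is bijective, hence an isomorphism of lattices.

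The main (mild) obstacle is recognizing that simplicity of $A/\theta$ forces $\theta$ to be a coatom of $\Con A$; once this structural fact is in hand, both directions reduce to short computations using the distributivity of $\Con A$ together with the Birkhoff representation of a finite distributive lattice via its meet-irreducibles.
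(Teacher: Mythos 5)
Your proof is correct and follows essentially the same route as the paper: after observing that each $\theta\in Q$ is a coatom with $\alpha\vee\theta=1$ and $\theta\vee\theta'=1$ for $\theta\neq\theta'$ in $Q$, you prove the map $\beta\mapsto(\beta\vee\alpha,(\beta\vee\theta)_{\theta\in Q})$ into $[\alpha,1]\times\prod_{\theta\in Q}[\theta,1]$ is a bijection, and your surjectivity construction $\beta=\bar\beta\wedge\bigwedge_{\theta\in Q'}\theta$ is exactly the element the paper uses (the paper writes $\beta\wedge\bigwedge_{\theta\in Q}\gamma_\theta$, which collapses to your expression once the $\gamma_\theta=1$ factors are dropped). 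The only cosmetic difference is in the injectivity step: the paper first shows $\alpha\wedge\bigwedge Q=0$ and then applies distributivity to $\xi=\xi\vee(\alpha\wedge\bigwedge Q)$ directly, whereas you argue via the Birkhoff representation by checking that $\beta$ and $\gamma$ lie below the same completely meet-irreducibles; both arguments rest on the same facts and are equally short.
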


\begin{proof}
As $\Con(A/\xi)\cong\upw\xi$, for all $\xi\in\Con A$, it suffices to prove that the map $j\colon\Con A\to(\upw\alpha)\times\prod_{\theta\in Q}(\upw\theta)$, $\xi\mapsto(\xi\vee\alpha,(\xi\vee\theta)_{\theta\in Q})$ is an isomorphism. If $\alpha\wedge\bigwedge Q\not=0$, then there exists $\theta\in\M(\Con A)$ such that $\alpha\wedge\bigwedge Q\not\le\theta$, thus $\alpha\not\le\theta$ (thus $\theta\in Q$) and $\bigwedge Q\not\le\theta$, a contradiction; whence $\alpha\wedge\bigwedge Q=0$. By using the distributivity of $\Con A$, it follows that $j$ is one-to-one.

We now prove that $j$ is surjective. Let $\beta\in\upw\alpha$, let $\gamma_\theta\in\upw\theta$ for all $\theta\in Q$. Put $\xi=\beta\wedge\bigwedge_{\theta\in Q}\gamma_\theta$. We have $\alpha\vee\beta=\beta$ and $\alpha\vee\theta=A\times A$ for all $\theta\in Q$, so:
\[
\xi\vee\alpha=(\beta\wedge\bigwedge_{\theta\in Q}\gamma_\theta)\vee\alpha=(\beta\vee\alpha)\wedge\bigwedge_{\theta\in Q}(\gamma_\theta\vee\alpha)=\beta\wedge\bigwedge_{\theta\in Q}(A\times A)=\beta
\]
With a similar argument we obtain $\xi\vee\theta=\gamma_\theta$ for all $\theta\in Q$, thus $j$ is surjective. Therefore, $j$ is an isomorphism.
\end{proof}

We say that a class~$\cV$ of algebras of the same similarity type is \emph{finitely semisimple}, if every finite subdirectly irreducible member of~$\cV$ is simple. An important example of a finitely semisimple variety is the variety of all modular lattices.

\begin{lemma}\label{L:CS_critpoint_ge_aleph_1}
Let~$\cV_1$ and~$\cV_2$ be congruence-distributive varieties of algebras of the same similarity type, with~$\cV_1$ finitely semisimple. We further assume that for every finite non-simple algebra $A\in\cV_1$, if~$A$ embeds into a simple algebra of~$\cV_1$, then~$A$ embeds into a simple algebra of~$\cV_2$.

Let $f\colon A\to A'$ be a morphism between finite algebras of~$\cV_1$. We denote by $\alpha$ \textup(resp., $\alpha'$\textup) the smallest congruence of~$A$ \textup(resp., $A'$\textup) such that $A/\alpha\in\cV_2$ \textup(resp., $A'/\alpha'\in\cV_2$\textup), with canonical projection $\pi_\alpha\colon A\tosurj A/\alpha$ \textup(resp., $\pi_{\alpha'}'\colon A'\tosurj A'/\alpha'$\textup). Let $B\in\cV_2$, let $p\colon B\tosurj A/\alpha$ be a surjective morphism, and let $\xi\colon\Conc A\to\Conc B$ be an isomorphism such that $(\Conc p)\circ\xi=\Conc\pi_\alpha$. Then there are $B'\in\cV_2$, a morphism $g\colon B\to B'$, a surjective morphism $p'\colon B'\tosurj A'/\alpha'$, and an isomorphism $\xi'\colon\Conc A'\to\Conc B'$, such that the following diagram is commutative:
\[
\xymatrix{
 & \Conc A \ar[rr]^{\Conc f} \ar[dl]_{\Conc\pi_\alpha}\ar[dd]^{\xi}_{\cong} & & \Conc A' \ar[dd]^{\xi'}_{\cong} \ar[dr]^{\Conc\pi'_{\alpha'}}&\\
\Conc(A/\alpha) & & & & \Conc(A'/\alpha')\\
 & \Conc B \ar@{->>}[ul]^{\Conc p} \ar[rr]^{\Conc g} & & \Conc B' \ar@{->>}[ur]_{\Conc p'}
}
\]

If there is at least one simple algebra in~$\cV_2$, then $\Conc\circ\cA$ has a lifting in~$\cV_2$, for every diagram $\cA\colon\omega\to\cV_1$ of finite algebras, 

Moreover, if~$\cV_1$ is locally finite, then $\crit{\cV_1}{\cV_2}\ge\aleph_1$.
\end{lemma}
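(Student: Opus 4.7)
I will prove the three assertions in order. The main tool throughout is Lemma~\ref{L:ConABoolean}, which is available because finite semisimplicity of $\cV_1$ forces every $A/\theta$ with $\theta\in\M(\Con A)$ to be simple; applied to $A$ and $A'$ with the congruences $\alpha$ and $\alpha'$ it yields
\[
\Con A\cong\Con(A/\alpha)\times\prod_{\theta\in Q}\two,\qquad \Con A'\cong\Con(A'/\alpha')\times\prod_{\theta'\in Q'}\two,
\]
where $Q=\setm{\theta\in\M(\Con A)}{\alpha\not\le\theta}$ and $Q'$ is defined analogously.

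For the extension assertion, note first that since $A/\ker(\pi_{\alpha'}'\circ f)$ embeds into $A'/\alpha'\in\cV_2$, minimality of $\alpha$ gives $\alpha\le\ker(\pi_{\alpha'}'\circ f)$, so $f$ descends to $\tilde f\colon A/\alpha\to A'/\alpha'$. For each $\theta'\in Q'$ let $\beta_{\theta'}\in\Con A$ be the kernel of the composite $A\xrightarrow{f}A'\tosurj A'/\theta'$, so that $A/\beta_{\theta'}$ is a finite $\cV_1$-algebra embedding into the simple $A'/\theta'\in\cV_1$. I will then construct, for each $\theta'\in Q'$, a simple algebra $B'_{\theta'}\in\cV_2$ together with a morphism $g_{\theta'}\colon B\to B'_{\theta'}$ of kernel exactly $\xi(\beta_{\theta'})$. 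If $\beta_{\theta'}$ is meet-irreducible in $\Con A$ \textup(or total\textup), then $B_{\theta'}:=B/\xi(\beta_{\theta'})$ already has congruence lattice $\two$ \textup(resp.\ trivial\textup), so $B_{\theta'}$ is itself simple in $\cV_2$ and plays the role of $B'_{\theta'}$; otherwise $A/\beta_{\theta'}$ is non-simple and the standing hypothesis supplies an embedding $A/\beta_{\theta'}\hookrightarrow C_{\theta'}$ into some simple $C_{\theta'}\in\cV_2$, from which I extract an embedding $B_{\theta'}\hookrightarrow B'_{\theta'}$ into some simple $\cV_2$-algebra by using that $B_{\theta'}$ is a subdirect product of the simple $\cV_2$-quotients $B/\xi(\theta)$ for $\theta\in\M(\Con A)$ with $\theta\ge\beta_{\theta'}$. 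With the $B'_{\theta'}$ in hand, set
\[
B'=(A'/\alpha')\times\prod_{\theta'\in Q'}B'_{\theta'}\in\cV_2,
\]
take $p'$ the first projection, obtain $\xi'\colon\Conc A'\to\Conc B'$ coordinate-by-coordinate via Lemma~\ref{L:ConABoolean} applied to $B'$, and let $g=(\tilde f\circ p,(g_{\theta'})_{\theta'})$. Pentagonal commutativity then reduces coordinate-by-coordinate to $\tilde f\circ p=p'\circ g$ on the first factor and to $\ker(\mathrm{proj}_{\theta'}\circ g)=\xi(\beta_{\theta'})$ on the $\theta'$-th, both immediate from the construction.

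For the $\omega$-diagram lifting, I proceed by induction on $n$. At the base step take $\cB(0)=(\cA(0)/\alpha_0)\times C^{\card Q_0}$, where $C$ is any simple algebra of $\cV_2$; Lemma~\ref{L:ConABoolean} applied to $\cA(0)$ and to $\cB(0)$ yields the base data $(\xi^0,p_0)$. The inductive step is a single application of the extension assertion to $f=\cA(n\le n+1)$, $B=\cB(n)$, $p=p_n$, $\xi=\xi^n$, producing $\cB(n+1),\cB(n\le n+1),p_{n+1},\xi^{n+1}$; naturality of the family $(\xi^n)_n$ is exactly the pentagonal commutativity.

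For the critical-point bound, let $D\in\Conc\cV_1$ be countable with $D\cong\Conc A$ for some $A\in\cV_1$. Applying Lemma~\ref{L:LS} I may assume $A$ is countable; local finiteness of $\cV_1$ then lets me write $A=\bigcup_{n<\omega}A_n$ as a directed union of finite subalgebras, giving a diagram $\cA\colon\omega\to\cV_1$ of finite algebras with $\varinjlim\cA\cong A$. The previous part supplies a lifting $\cB\colon\omega\to\cV_2$ of $\Conc\circ\cA$, and since $\Conc$ preserves direct limits,
\[
\Conc(\varinjlim\cB)\;\cong\;\varinjlim\Conc\cB(n)\;\cong\;\varinjlim\Conc\cA(n)\;\cong\;D,
\]
showing $D\in\Conc\cV_2$ and hence $\crit{\cV_1}{\cV_2}\ge\aleph_1$. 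I expect the main obstacle to be the non-simple case of the extension assertion: the standing hypothesis provides an embedding of $A/\beta_{\theta'}$ into a simple $\cV_2$-algebra, but the embedding I actually need is of the congruence-isomorphic yet structurally distinct quotient $B_{\theta'}$, and bridging this gap will require a careful transfer argument exploiting the subdirect decomposition of $B$ dictated by the Boolean factors of $\Con B$.
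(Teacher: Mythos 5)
Your overall architecture (extension step, then $\omega$-chain lifting by induction, then $\crit{\cV_1}{\cV_2}\ge\aleph_1$ via local finiteness and continuity of $\Conc$) matches the paper, and the second and third parts are essentially correct. But the non-simple branch of your extension step has a genuine gap, and you correctly sense it yourself: the hypothesis gives an embedding of $A/\beta_{\theta'}$ into a simple $\cV_2$-algebra, whereas what you need to map out of is $B$, and your intermediate object $B_{\theta'}=B/\xi(\beta_{\theta'})$ is, a priori, only \emph{congruence}-isomorphic to $A/\beta_{\theta'}$. The ``transfer argument exploiting the subdirect decomposition'' you propose does not close this: knowing that $B_{\theta'}$ is a subdirect product of simple $\cV_2$-algebras gives you no embedding of $B_{\theta'}$ into a single simple $\cV_2$-algebra, and there is no general reason a congruence-isomorphic copy should inherit such an embedding.

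The paper closes this gap by a different and decisive observation, which never forms $B/\xi(\beta_{\theta'})$ at all. In the non-simple case, $A/\beta_{\theta'}$ embeds into the simple $\cV_1$-algebra $A'/\theta'$, so by the standing hypothesis it embeds into a simple $\cV_2$-algebra $B_{\theta'}'$, and in particular $A/\beta_{\theta'}\in\cV_2$. Minimality of $\alpha$ then forces $\beta_{\theta'}\ge\alpha$, so the canonical surjection $p_{\theta'}\colon A/\alpha\tosurj A/\beta_{\theta'}$ exists, and one simply takes the composite $h_{\theta'}=g_{\theta'}\circ p_{\theta'}\circ p\colon B\tosurj A/\alpha\tosurj A/\beta_{\theta'}\hookrightarrow B_{\theta'}'$; the compatibility $(\Conc p)\circ\xi=\Conc\pi_\alpha$ then guarantees $\ker h_{\theta'}=\xi(\beta_{\theta'})$, which is the property you wanted. (Equivalently: since $\ker p=\xi(\alpha)$ and $\beta_{\theta'}\ge\alpha$, the compatibility of $\xi$ actually does force $B/\xi(\beta_{\theta'})\cong A/\beta_{\theta'}$ --- so your isomorphism does hold, but it comes from minimality of $\alpha$ and the triangle $\Conc p\circ\xi=\Conc\pi_\alpha$, not from any subdirect-decomposition argument.) With this observation inserted, and with the corresponding direct construction of $h_{\theta'}$ in the simple case as the projection $B\tosurj B/\xi(\beta_{\theta'})$, your plan becomes the paper's proof. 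A minor terminological slip: where you write ``meet-irreducible,'' you mean ``coatom''; they happen to coincide here because $\cV_1$ is finitely semisimple, so it does no harm, but it is the coatom property (not meet-irreducibility per se) that makes $\Con(B/\xi(\beta_{\theta'}))\cong\two$.
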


\begin{proof}
We denote by $\pi_\theta\colon A\tosurj A/\theta$ (resp., $\pi'_\theta\colon A'\tosurj A'/\theta$) the canonical projection for each $\theta\in\Con A$ (resp., $\theta\in\Con A'$).
The algebra $A/f^{-1}(\alpha')$ is isomorphic to a subalgebra of $A'/\alpha'\in\cV_2$, thus $A/f^{-1}(\alpha')\in\cV_2$, so $f^{-1}(\alpha')\supseteq\alpha$, and so $\Conc(f)(\alpha)\subseteq\alpha'$, thus the morphism $g_\alpha\colon A/\alpha\to A'/\alpha'$, $x/\alpha\mapsto f(x)/\alpha'$ is well-defined, and the following diagram is commutative:
\[
\begin{CD}
A @>f>> A'\\
@V{\pi_\alpha}VV @V{\pi'_{\alpha'}}VV\\
A/\alpha @>{g_\alpha}>> A'/\alpha'
\end{CD}
\]
Put $h_\alpha = g_\alpha\circ p$.

Put $Q=\setm{\theta\in\M(\Con A')}{A'/\theta\not\in\cV_2}$. For each $\theta\in Q$, the algebra $A/f^{-1}(\theta)$ is isomorphic to a subalgebra of $A'/\theta$ which is a simple algebra of~$\cV_1$. If $A/f^{-1}(\theta)$ is not simple, then $A/f^{-1}(\theta)\in\cV_2$,  and $A/f^{-1}(\theta)$ is a subalgebra of a simple algebra of~$\cV_2$. So one of the following statements holds:
\begin{enumerate}
\item The algebra $A/f^{-1}(\theta)$ is a subalgebra of a simple algebra in~$\cV_2$.
\item The algebra $A/f^{-1}(\theta)$ is simple and is not in~$\cV_2$.
\end{enumerate}

If $A/f^{-1}(\theta)\not\in\cV_2$, let $B_\theta=B/\xi(f^{-1}(\theta))$, which is a simple algebra, and let $h_\theta\colon B\tosurj B_\theta$ be the canonical projection. If $A/f^{-1}(\theta)\in\cV_2$, then there are a simple algebra $B_\theta\in\cV_2$ and an embedding $g_\theta=A/f^{-1}(\theta)\toinj B_\theta$. Moreover, as $A/f^{-1}(\theta)\in\cV_2$, the containment $f^{-1}(\theta)\supseteq \alpha$ holds. Denote by $p_{\theta}\colon A/\alpha\tosurj A/f^{-1}(\theta)$ the canonical projection. Put $h_\theta=g_\theta\circ p_\theta\circ p$.

Let $\phi_\theta \colon \Conc(A'/\theta)\to \Conc B_\theta$ be the only possible isomorphism, put $\xi'_\theta =\phi_\theta\circ\Conc\pi'_\theta$, for all $\theta\in Q$. Let $\xi'_{\alpha'}=\Conc\pi'_{\alpha'}$.

The algebra $B'=A'/\alpha'\times\prod_{\theta\in Q} B_\theta$ belongs to~$\cV_2$. Define
\begin{align*}
g\colon B&\to B'\\
x&\mapsto (h_\alpha(x),(h_\theta(x))_{\theta\in Q}).
\end{align*}
Observe that as $\Con B'\cong \Con (A'/\alpha')\times\prod_{\theta\in Q} \Con (B_\theta)$ is finite, every congruence of $B'$ is compact, so $\Conc B'=\Con B'$, thus we can define a map
\begin{align*}
\xi'\colon \Conc A' &\to \Conc B'\\
\bx&\mapsto \xi'_{\alpha'}(\bx) \times \prod_{\theta\in Q}\xi'_\theta(\bx).
\end{align*}
By Lemma~\ref{L:ConABoolean}, the canonical map $\psi\colon\Conc A'\to \Conc(A'/\alpha')\times\prod_{\theta\in Q}\Conc(A'/\theta)$ is an isomorphism, the map $\id_{\Conc(A'/\alpha')}\times\prod_{\theta\in Q}\phi_\theta$ is also an isomorphism, so the map $\xi'=(\id_{\Conc (A'/\alpha')}\times\prod_{\theta\in Q}\phi_\theta)\circ\psi$ is an isomorphism.

Denote by $p'\colon B'\tosurj A'/\alpha'$ the canonical projection and by $p'_\theta\colon B'\to B_\theta$ the canonical projection, for all $\theta\in Q$.

The equality $(\Conc p')\circ\xi'=\xi'_{\alpha'}$ is obvious. Moreover $p'\circ g=g_\alpha\circ p$, so the following equalities hold:
\begin{equation}\label{E:Eq1LemCritpointGEAleph0}
(\Conc p')\circ (\Conc g)\circ\xi=(\Conc g_\alpha)\circ(\Conc p)\circ\xi=(\Conc g_\alpha)\circ\Conc\pi_\alpha.
\end{equation}
As $g_\alpha\circ \pi_\alpha= \pi'_{\alpha'}\circ f$ we obtain
\[
(\Conc p')\circ (\Conc g)\circ\xi=(\Conc\pi'_{\alpha'})\circ \Conc f = (\Conc p')\circ \xi'\circ\Conc f.
\]

Let $\theta\in Q$, then the following equalities hold:
\begin{equation*}
(\Conc p'_\theta)\circ\xi'\circ(\Conc f)=\xi'_\theta\circ(\Conc f)=\phi_\theta\circ(\Conc\pi'_\theta)\circ(\Conc f).
\end{equation*}

Assume that $A/f^{-1}(\theta)\not\in\cV_2$. Let $\beta\in\Conc A$, then the following equivalences hold:
\begin{align*}
((\Conc p'_\theta)\circ\xi'\circ(\Conc f))(\beta)=0 & \Longleftrightarrow \Conc(\pi'_\theta\circ f)(\beta)=0 \\
&\Longleftrightarrow \beta\subseteq f^{-1}(\theta) \\
&\Longleftrightarrow \xi(\beta)\subseteq \xi(f^{-1}(\theta)) \\
&\Longleftrightarrow (\Conc h_\theta)(\xi(\beta)) = 0 \\
&\Longleftrightarrow ((\Conc p'_\theta)\circ(\Conc g)\circ\xi)(\beta) = 0.
\end{align*}
Therefore, as $B_\theta$ is simple, we obtain
\begin{align}
&(\Conc p'_\theta)\circ\xi'\circ(\Conc f)=(\Conc p'_\theta)\circ(\Conc g)\circ\xi,\notag\\
\label{E:Eq2LemCritpointGEAleph0}
 & \text{for all $\theta\in Q$ such that $A/f^{-1}(\theta)\not\in\cV_2$.}
\end{align}

Assume that $A/f^{-1}(\theta)\in\cV_2$. The following equalities hold:
\begin{align*}
(\Conc p'_\theta)\circ(\Conc g)\circ\xi &= (\Conc h_\theta)\circ\xi  \\
&=(\Conc g_\theta)\circ(\Conc p_\theta)\circ(\Conc p)\circ\xi \\
&=(\Conc g_\theta)\circ(\Conc p_\theta)\circ(\Conc \pi_\alpha) \\
&=(\Conc g_\theta)\circ(\Conc \pi_{f^{-1}(\theta)}).
\end{align*}
Let $\beta\in\Conc A$, the following equivalences hold:
\begin{align*}
(\Conc &p'_\theta)\circ(\Conc g)\circ\xi(\beta)=0\\
& \Longleftrightarrow (\Conc g_\theta)\circ(\Conc \pi_{f^{-1}(\theta)})(\beta)=0\\
&\Longleftrightarrow (\Conc \pi_{f^{-1}(\theta)})(\beta)=0 & & \text{as $g_\theta$ is one-to-one}\\
&\Longleftrightarrow \beta\subseteq f^{-1}(\theta)\\
& \Longleftrightarrow \Conc(\pi'_\theta\circ f)(\beta)=0\\
& \Longleftrightarrow (\Conc p'_\theta)\circ\xi'\circ(\Conc f)(\beta)=0.
\end{align*}
Therefore, as $B_\theta$ is simple,
\begin{align}
&(\Conc p'_\theta)\circ\xi'\circ(\Conc f)=(\Conc p'_\theta)\circ(\Conc g)\circ\xi,\notag\\
\label{E:Eq3LemCritpointGEAleph0}
&\text{for all $\theta\in Q$ such that $A/f^{-1}(\theta)\in\cV_2$.}
\end{align}
As $\Conc B'\toinj\Conc(A/\alpha')\times\prod_{\theta\in Q}\Conc B_\theta$, by \eqref{E:Eq1LemCritpointGEAleph0}, \eqref{E:Eq2LemCritpointGEAleph0}, and \eqref{E:Eq3LemCritpointGEAleph0} the following diagram is commutative:
\[
\begin{CD}
\Conc A @>\Conc f>> \Conc A'\\
@V{\xi}VV   @V{\xi'}VV \\
\Conc B @>\Conc g>> \Conc B'
\end{CD}
\]

Let $S$ be a simple algebra in~$\cV_2$, let $\cA\colon\omega\to\cV_1$ be a diagram of finite algebras, let $\alpha_n$ be the smallest congruence of $\cA(n)$ such that $\cA(n)/\alpha_n\in\cV_2$, let $\pi^n_\theta\colon \cA(n)\tosurj \cA(n)/\theta$ be the canonical projection, for all $\theta\in\Con\cA(n)$. Let $Q_n=\setm{\theta\in\M(\Conc \cA(n))}{\cA(n)/\theta\not\in \cV_2}$, for all $n\in\omega$. Let $\phi_\theta \colon\Conc (\cA(0)/\theta)\to S$ be the only possible isomorphism. Let $\xi_{\alpha_0}=\Conc\pi^0_{\alpha_0}$, let $\xi_\theta=\phi_\theta\circ\Conc\pi^0_\theta$, for all $\theta\in Q_0$. Put $B_0=(\cA(0)/\alpha_0)\times S^{Q_0}$, let $p_0\colon B_0\tosurj \cA(0)/\alpha_0$ be the canonical projection. By Lemma~\ref{L:ConABoolean}, the morphism
\begin{align*}
\xi_0\colon \Conc \cA(0) &\to \Conc B_0\\
\bx &\mapsto \xi_{\alpha_0}(\bx)\times\prod_{\theta\in Q_0}\xi_\theta(\bx)
\end{align*}
is an isomorphism. Moreover $(\Conc p_0)\circ\xi_0 =\xi_{\alpha_0}=\Conc\pi_{\alpha_0}^0$. Thus, applying by induction the first part of the lemma, we construct a family $(B_n)_{n\in\omega}$ of algebras of~$\cV_2$, a family $(g_n\colon B_n\to B_{n+1})_{n\in\omega}$ of homomorphisms, and a family $(\xi_n\colon\Conc\cA(n)\to B_n)_{n\in\omega}$ of isomorphisms such that the following diagram is commutative:
\begin{equation}
\xymatrix{
\Conc\cA(n) \ar[rrr]^-{\Conc\cA(n\le n+1)} \ar[d]_-{\xi_n} & & &\Conc\cA(n+1) \ar[d]^-{\xi_{n+1}}\\
\Conc B_n \ar[rrr]_-{\Conc g_n} & & & \Conc B_{n+1}
}
\end{equation}
Then the functor
\begin{align*}
\cB\colon\omega &\to \cV_2\\
n &\mapsto B_n\\
(n\le m)&\mapsto g_{m-1}\circ g_{m-2}\circ\dots\circ g_n
\end{align*}
is a lifting of $\Conc\circ\cA$ in~$\cV_2$.

Now assume that~$\cV_1$ is locally finite. Let $A\in\cV_1$ such that $\Conc A$ is countable. Taking a sublattice, we can assume that~$A$ is countable (cf. Lemma~\ref{L:LS}) and so it is the direct limit of a diagram $\cA\colon\omega\to\cV_1$ of finite algebras. So $\Conc\circ \cA$ has a lifting in~$\cV_2$, thus, as $\Conc$ preserves direct limits, $\Conc A$ has a lifting in~$\cV_2$. So $\crit{\cV_1}{\cV_2}>\aleph_0$.
\end{proof}

\begin{remark}\label{R:concsurj}
Let $f\colon K\toinj L$ be a one-to-one morphism of finite modular lattices, such that $K$ and $L$ have the same length; then $\Conc f$ is surjective.
\end{remark}

\begin{corollary}
Let~$\cV_1$ be the variety generated by $T_1$, let~$\cV_2$ be the variety generated by $T_2$, $T_3$, and $T_4$, where $T_1$, $T_2$, $T_3$, and $T_4$ are the lattices in Figure \textup{\ref{F:treillis_T_1_T_2_T_3_T_4}}. Then $\crit{\cV_1}{\cV_2}=\aleph_1$. This result extends to the corresponding varieties of bounded lattices \pup{resp., lattices with zero}.
\end{corollary}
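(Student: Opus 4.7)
The plan is to split the equality into a lower bound $\crit{\cV_1}{\cV_2}\ge\aleph_1$ and a matching upper bound $\crit{\cV_1}{\cV_2}\le\aleph_1$.

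For the lower bound I would invoke Lemma~\ref{L:CS_critpoint_ge_aleph_1}. Most of its hypotheses are automatic: $\cV_1$ is locally finite since it is finitely generated, both $\cV_1$ and $\cV_2$ are congruence-distributive since they are varieties of lattices, $\cV_1$ is finitely semisimple since the variety of all modular lattices is, and $\cV_2$ manifestly contains a simple algebra. The only nontrivial step is the embedding condition: every finite non-simple $A\in\cV_1$ that embeds into a simple member of $\cV_1$ must embed into a simple member of $\cV_2$. By J\'onsson's Lemma the simple members of $\cV_1$ (resp.\ $\cV_2$) are exactly the simple subdirectly irreducible homomorphic images of sublattices of $T_1$ (resp.\ of $T_2$, $T_3$, $T_4$). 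The verification therefore reduces to a finite check: enumerate the simple quotients of subalgebras of $T_1$, classify their finite non-simple sublattices, and for each such sublattice exhibit an embedding into a simple subdirectly irreducible quotient of $T_2$, $T_3$, or $T_4$. Remark~\ref{R:concsurj} on morphisms of finite modular lattices of equal length is likely to be used to keep this case analysis tractable. Once the hypothesis is checked, Lemma~\ref{L:CS_critpoint_ge_aleph_1} yields $\crit{\cV_1}{\cV_2}\ge\aleph_1$.

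For the upper bound I would exhibit one explicit $\jzs$ $S$ of cardinality $\aleph_1$ that lies in $\Conc\cV_1\setminus\Conc\cV_2$. A natural approach, in the spirit of the Plo\v{s}\v{c}ica-type constructions cited in the introduction, is to build a diagram $\cD$ of finite distributive $\jzs$ s indexed by an $\aleph_1$-sized poset such as $\Powf{\aleph_1}$ or a product of finite Boolean lattices, arranged so that $\cD$ admits a lifting in $\cV_1$ but no lifting in $\cV_2$. Taking $\Conc$ of the direct limit of a $\cV_1$-lifting produces a $\jzs$ of cardinality $\aleph_1$ that belongs to $\Conc\cV_1$; the failure of liftability in $\cV_2$ is then converted, via a Kuratowski-style free-set argument on $\aleph_1$, into the statement that no single algebra in $\cV_2$ can represent it. The combinatorial heart of this step is to pinpoint a configuration realized in $T_1$ but not in any of $T_2$, $T_3$, $T_4$, and to encode it so that its non-realizability propagates to the uncountable diagram.

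The extension to bounded lattices (or lattices with zero) requires no essential change: adding $0$ and $1$ as nullary operations does not alter the congruence lattice, so $\Conc$ is unaffected; in the lower-bound verification the embeddings constructed between finite lattices preserve the bounds automatically, and in the upper-bound construction the witnessing diagram can be arranged inside algebras already containing the required constants. The main obstacle is clearly the upper bound, namely the concrete construction of the $\aleph_1$-indexed witnessing diagram together with the free-set argument ruling out a lifting in $\cV_2$; the lower bound reduces, via the general machinery already in place, to a finite, if delicate, case analysis of sublattices of $T_1, T_2, T_3, T_4$.
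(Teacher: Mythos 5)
Your lower bound is on the right track: you invoke Lemma~\ref{L:CS_critpoint_ge_aleph_1} and correctly identify that the work lies in verifying its embedding hypothesis. The paper settles that hypothesis with a single observation you have not pinned down: $a_1,\dots,a_6$ are exactly the doubly irreducible generators of $T_1$, so the maximal proper sublattices of $T_1$ are precisely the six lattices $T_1-\{a_k\}$, and each of these is isomorphic to $T_2$, $T_3$, or $T_4$. That one fact makes the embedding hypothesis transparent, with no further case analysis. (Remark~\ref{R:concsurj}, incidentally, plays no role in the lower bound.)

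The genuine gap is in the upper bound. You propose to build an $\aleph_1$-indexed diagram, pass to a direct limit, and run a Kuratowski-style free-set argument to produce an explicit $\aleph_1$-sized \jzs\ outside $\Conc\cV_2$. This is the pre-condensate style of argument and it bypasses the entire point of Sections~\ref{S:LSP}--\ref{S:Liftings}, whose purpose is to spare you any uncountable construction. What the paper actually does is exhibit a \emph{finite} diagram $\vec D$ indexed by the square $\mathbf{2}\times\mathbf{2}$, with $D_0=2^4$, $D_1=D_2=2^2$, $D_3=2$ and explicit surjective $(\vee,0)$-homomorphisms; it lifts $\vec D$ in $\cV_1$ by the concrete chain of sublattices $S_0\subseteq S_1,S_2\subseteq T_1$; and it shows by a case analysis over $B_3\in\{T_2,T_3,T_4\}$ that $\vec D$ has no lifting in $\cV_2$ (this is where Remark~\ref{R:concsurj} earns its keep, together with the facts that each $B_i$ must have length four, that $\Con(B_1\cap B_2)\cong 2^4$, and that $B_1,B_2$ admit no two-element quotient). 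Corollary~\ref{C:lifting-inFGCDvariety-with-alephn} with $n=1$ then converts the failure of this finite $\mathbf{2}\times\mathbf{2}$-indexed diagram to lift into $\crit{\cV_1}{\cV_2}\le\aleph_1$. So the idea missing from your proposal is the reduction to a \emph{finite} obstruction: what you need to find is a small diagram of finite Boolean \jzs s over $\mathbf{2}\times\mathbf{2}$ separating $\cV_1$ from $\cV_2$, not an uncountable semilattice and a bespoke free-set argument.
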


Observe that the varieties $\cV_1$ and~$\cV_2$ are self-dual.

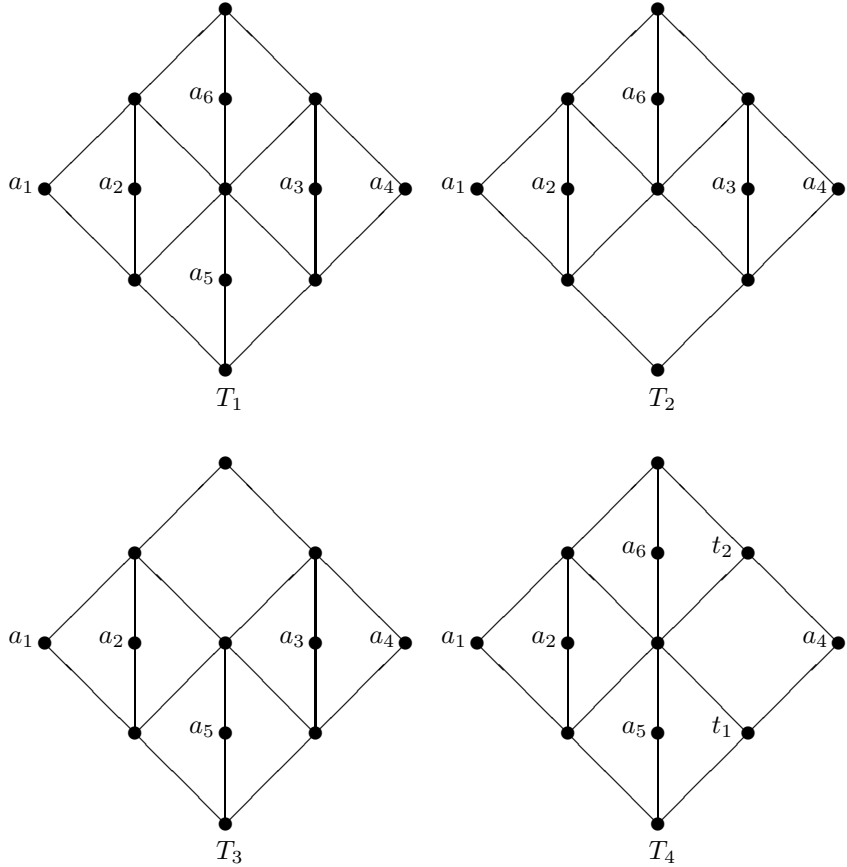
\begin{figure}[here,bottom,top]
\caption{The lattices $T_1$, $T_2$, $T_3$, and $T_4$.}\label{F:treillis_T_1_T_2_T_3_T_4}

\setlength{\unitlength}{0.6mm}
\begin{picture}(90,100)(-10,-10)
\put(40,0){\line(-1,1){40}}
\put(40,0){\line(1,1){40}}
\put(40,0){\line(0,1){80}}

\put(0,40){\line(1,1){40}}
\put(80,40){\line(-1,1){40}}
\put(20,20){\line(1,1){40}}
\put(60,20){\line(-1,1){40}}

\put(20,20){\line(0,1){40}}
\put(60,20){\line(0,1){40}}

\put(40,0){\circle*{3}}

\put(20,20){\circle*{3}}
\put(40,20){\circle*{3}}
\put(60,20){\circle*{3}}

\put(0,40){\circle*{3}}
\put(20,40){\circle*{3}}
\put(40,40){\circle*{3}}
\put(60,40){\circle*{3}}
\put(80,40){\circle*{3}}

\put(20,60){\circle*{3}}
\put(40,60){\circle*{3}}
\put(60,60){\circle*{3}}
\put(40,80){\circle*{3}}

\put(-8,40){$a_1$}
\put(12,40){$a_2$}

\put(52,40){$a_3$}
\put(72,40){$a_4$}

\put(32,20){$a_5$}
\put(32,60){$a_6$}

\put(38,-8){$T_1$}
\end{picture}\quad
\begin{picture}(90,100)(-10,-10)
\put(40,0){\line(-1,1){40}}
\put(40,0){\line(1,1){40}}
\put(40,40){\line(0,1){40}}

\put(0,40){\line(1,1){40}}
\put(80,40){\line(-1,1){40}}
\put(20,20){\line(1,1){40}}
\put(60,20){\line(-1,1){40}}

\put(20,20){\line(0,1){40}}
\put(60,20){\line(0,1){40}}

\put(40,0){\circle*{3}}

\put(20,20){\circle*{3}}
\put(60,20){\circle*{3}}

\put(0,40){\circle*{3}}
\put(20,40){\circle*{3}}
\put(40,40){\circle*{3}}
\put(60,40){\circle*{3}}
\put(80,40){\circle*{3}}

\put(20,60){\circle*{3}}
\put(40,60){\circle*{3}}
\put(60,60){\circle*{3}}
\put(40,80){\circle*{3}}

\put(-8,40){$a_1$}
\put(12,40){$a_2$}

\put(52,40){$a_3$}
\put(72,40){$a_4$}

\put(32,60){$a_6$}

\put(38,-8){$T_2$}
\end{picture}
\begin{picture}(90,100)(-10,-10)
\put(40,0){\line(-1,1){40}}
\put(40,0){\line(1,1){40}}
\put(40,0){\line(0,1){40}}

\put(0,40){\line(1,1){40}}
\put(80,40){\line(-1,1){40}}
\put(20,20){\line(1,1){40}}
\put(60,20){\line(-1,1){40}}

\put(20,20){\line(0,1){40}}
\put(60,20){\line(0,1){40}}

\put(40,0){\circle*{3}}

\put(20,20){\circle*{3}}
\put(40,20){\circle*{3}}
\put(60,20){\circle*{3}}

\put(0,40){\circle*{3}}
\put(20,40){\circle*{3}}
\put(40,40){\circle*{3}}
\put(60,40){\circle*{3}}
\put(80,40){\circle*{3}}

\put(20,60){\circle*{3}}
\put(60,60){\circle*{3}}
\put(40,80){\circle*{3}}

\put(-8,40){$a_1$}
\put(12,40){$a_2$}

\put(52,40){$a_3$}
\put(72,40){$a_4$}

\put(32,20){$a_5$}

\put(38,-8){$T_3$}
\end{picture}\quad
\begin{picture}(90,100)(-10,-10)
\put(40,0){\line(-1,1){40}}
\put(40,0){\line(1,1){40}}
\put(40,0){\line(0,1){80}}

\put(0,40){\line(1,1){40}}
\put(80,40){\line(-1,1){40}}
\put(20,20){\line(1,1){40}}
\put(60,20){\line(-1,1){40}}

\put(20,20){\line(0,1){40}}

\put(40,0){\circle*{3}}

\put(20,20){\circle*{3}}
\put(40,20){\circle*{3}}
\put(60,20){\circle*{3}}

\put(0,40){\circle*{3}}
\put(20,40){\circle*{3}}
\put(40,40){\circle*{3}}
\put(80,40){\circle*{3}}

\put(20,60){\circle*{3}}
\put(40,60){\circle*{3}}
\put(60,60){\circle*{3}}
\put(40,80){\circle*{3}}

\put(52,20){$t_1$}
\put(52,60){$t_2$}

\put(-8,40){$a_1$}
\put(12,40){$a_2$}

\put(72,40){$a_4$}

\put(32,20){$a_5$}
\put(32,60){$a_6$}

\put(38,-8){$T_4$}
\end{picture}
\end{figure}

\begin{proof}
The lattice $T_1$ is generated by $a_1,a_2,a_3,a_4,a_5$, and $a_6$ which are all doubly irreducible. So the maximal sublattices of $T_1$ are $T_1-\set{a_k}$, for all $1\le k\le 6$. As all these lattices are isomorphic to either $T_2$, $T_3$, or $T_4$, the assumptions of Lemma~\ref{L:CS_critpoint_ge_aleph_1} are satisfied, thus $\crit{\cV_1}{\cV_2}\ge\aleph_1$.

Put $D_0=2^4$, $D_1=D_2=2^2$, $D_3=2$. Put:
\begin{align*}
\phi_1\colon D_0&\rightarrow D_1\\
(\alpha,\beta,\gamma,\delta)&\mapsto(\alpha\vee\beta,\gamma\vee\delta)\\
\\
\phi_2\colon D_0&\rightarrow D_2\\
(\alpha,\beta,\gamma,\delta)&\mapsto(\alpha\vee\delta,\beta\vee\gamma)\\
\\
\psi\colon 2^2&\rightarrow D_3\\
(\alpha,\beta)&\mapsto\alpha\vee\beta
\end{align*}
Let $\overrightarrow{D}$ be the following commutative diagram:
$$ \xymatrix{
& D_3&\\ 
 D_1\ar[ur]^{\psi} & & D_2\ar[ul]_{\psi} \\
 & D_0 \ar[ul]^{\phi_1} \ar[ur]_{\phi_2}
}$$
Put $S_1=T_1-\{a_2,a_3\}$, and $S_2=T_1-\{a_5,a_6\}$. Then $S_1$ and $S_2$ are sublattices of $T_1$; put $S_0=S_1\cap S_2$. Let $i_1\colon S_0\to S_1$, $i_2\colon S_0\to S_2$, $j_1\colon S_1\to T_1$, $j_2\colon S_2\to T_1$ be the respective inclusion mappings. Then the following diagram is a lifting of~$\overrightarrow{D}$ in~$\cV_1$.
$$ \xymatrix{
& T_1&\\ 
 S_1\ar[ur]^{j_1} & & S_2\ar[ul]_{j_2} \\
 & S_0 \ar[ul]^{i_1} \ar[ur]_{i_2}
}$$
\begin{figure}[here,top,bottom]
\caption{The lattices $S_0$, $S_1$, and $S_2$.}\label{F:lattices_S_0_S_1_S_2}
\setlength{\unitlength}{0.6mm}
\begin{picture}(90,100)(-10,-10)
\put(40,0){\line(-1,1){40}}
\put(40,0){\line(1,1){40}}

\put(0,40){\line(1,1){40}}
\put(80,40){\line(-1,1){40}}
\put(20,20){\line(1,1){40}}
\put(60,20){\line(-1,1){40}}

\put(40,0){\circle*{3}}

\put(20,20){\circle*{3}}
\put(60,20){\circle*{3}}

\put(0,40){\circle*{3}}
\put(40,40){\circle*{3}}
\put(80,40){\circle*{3}}

\put(20,60){\circle*{3}}
\put(60,60){\circle*{3}}
\put(40,80){\circle*{3}}

\put(-8,40){$a_1$}
\put(72,40){$a_4$}

\put(38,-8){$S_0$}
\end{picture}\quad
\begin{picture}(90,100)(-10,-10)
\put(40,0){\line(-1,1){40}}
\put(40,0){\line(1,1){40}}
\put(40,0){\line(0,1){80}}

\put(0,40){\line(1,1){40}}
\put(80,40){\line(-1,1){40}}
\put(20,20){\line(1,1){40}}
\put(60,20){\line(-1,1){40}}

\put(40,0){\circle*{3}}

\put(20,20){\circle*{3}}
\put(40,20){\circle*{3}}
\put(60,20){\circle*{3}}

\put(0,40){\circle*{3}}
\put(40,40){\circle*{3}}
\put(80,40){\circle*{3}}

\put(20,60){\circle*{3}}
\put(40,60){\circle*{3}}
\put(60,60){\circle*{3}}
\put(40,80){\circle*{3}}

\put(-8,40){$a_1$}
\put(72,40){$a_4$}

\put(32,20){$a_5$}
\put(32,60){$a_6$}

\put(38,-8){$S_1$}
\end{picture}
\begin{picture}(90,100)(-10,-10)
\put(40,0){\line(-1,1){40}}
\put(40,0){\line(1,1){40}}

\put(0,40){\line(1,1){40}}
\put(80,40){\line(-1,1){40}}
\put(20,20){\line(1,1){40}}
\put(60,20){\line(-1,1){40}}

\put(20,20){\line(0,1){40}}
\put(60,20){\line(0,1){40}}

\put(40,0){\circle*{3}}

\put(20,20){\circle*{3}}
\put(60,20){\circle*{3}}

\put(0,40){\circle*{3}}
\put(20,40){\circle*{3}}
\put(40,40){\circle*{3}}
\put(60,40){\circle*{3}}
\put(80,40){\circle*{3}}

\put(20,60){\circle*{3}}
\put(60,60){\circle*{3}}
\put(40,80){\circle*{3}}

\put(-8,40){$a_1$}
\put(12,40){$a_2$}

\put(52,40){$a_3$}
\put(72,40){$a_4$}

\put(38,-8){$S_2$}
\end{picture}
\end{figure}
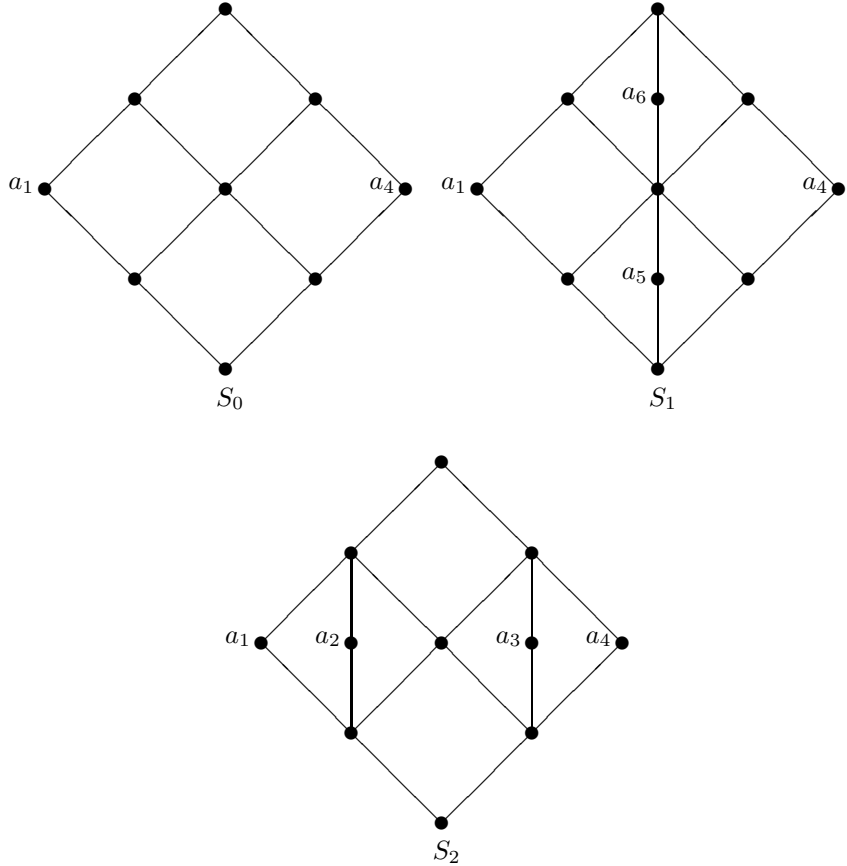

Assume that $\overrightarrow{D}$ has a lifting in~$\cV_2$:
$$ \xymatrix{
& B_3&\\ 
 B_1\ar[ur]^{g_1} & & B_2\ar[ul]_{g_2} \\
 & B_0 \ar[ul]^{f_1} \ar[ur]_{f_2}
}$$
Moreover let $(\xi_k\colon D_k\to\Con B_k)_{0\le k\le 3}$ be the corresponding isomorphism of diagrams. The \jzh s $\phi_1,\phi_2$, and $\psi$ separate $0$, thus $f_1,f_2,g_1$, and $g_2$ are one-to-one, and so we can assume that they are inclusion maps of sublattices. The lattice $B_3$ is simple, hence $B_3$ is of length at most four. As $\Conc B_0\cong 2^4$, all lattices $B_0,B_1,B_2$, and $B_3$ have length four. As $T_2,T_3,T_4,T_4-\set{a_4}$ are, up to isomorphism, all simple lattices of $\cV_2$ of length four, we can assume, by taking a larger lattice, that $B_3\in\set{T_2,T_3,T_4}$. Let $i\in\set{1,2}$. If $K$ is a sublattice of length four of $B_3$ such that $B_i\subseteq K\subseteq B_3$ and $\Conc K\cong 2^2$, by Remark~\ref{R:concsurj} the map $\Conc s\colon \Conc B_i\to \Conc K$ is surjective, where $s\colon B_i\to K$ denotes the inclusion map. Hence $\Conc s$ is an isomorphism. So, taking larger lattices, we can also assume that~$B_1$ and~$B_2$ are maximal for containment, among sublattices of~$B_3$, with respect to the property of having a congruence lattice isomorphic to~$2^2$~$(*)$.

Let $h\colon B_0\to B_1\cap B_2$, $k_1\colon  B_1\cap B_2\to B_1$, and $k_2\colon  B_1\cap B_2\to B_2$ be the respective inclusion maps. Let $\theta_1=\xi_0(1,0,0,0)$, $\theta_2=\xi_0(0,1,0,0)$, $\theta_3=\xi_0(0,0,1,0)$, and $\theta_4=\xi_0(0,0,0,1)$. So the following equalities hold:
\[(\Con f_1)(\theta_1)=(\Con f_1)(\xi_0(1,0,0,0))=\xi_1(\phi_1(1,0,0,0))=\xi_1(1,0).\]
Similarly, $(\Con f_1)(\theta_3)=(\Con f_1)(\theta_4)=\xi_1(0,1)$, so $(\Con f_1)(\theta_1)\not\le (\Con f_1)(\theta_3)$ and $(\Con f_1)(\theta_1)\not\le (\Con f_1)(\theta_4)$, but $f_1=k_1\circ h$, so $(\Con h)(\theta_1)\not\le (\Con h)(\theta_3)$ and $(\Con h)(\theta_1)\not\le (\Con h)(\theta_4)$. Moreover $(\Con f_2)(\theta_1)=\xi_1(1,0)$ and $(\Con f_2)(\theta_2)=\xi_1(0,1)$, so $(\Con h)(\theta_1)\not\le(\Con h)(\theta_2)$. Similarly, $(\Con h)(\theta_i)\not\le(\Con h)(\theta_j)$, for all $i\not=j$ in $\{1,2,3,4\}$, and so $\Con(B_1\cap B_2)$ has a four-element antichain. As $B_1\cap B_2$ modular lattice of length four, $\Con(B_1\cap B_2)\cong 2^4$.

The equalities $(\Con f_1)(\xi_0(0,0,1,1))=\xi_1(\phi_1((0,0,1,1)))=\xi_1(0,1)$ hold, so we get an embedding $B_0/\xi_0(0,0,1,1)\to B_1/\xi_1(0,1)$, but $\Con(B_0/\xi_0(0,0,1,1))\cong 2^2$, so $B_1/\xi_1(0,1)$ is a lattice of length at least two. Similarly, $B_1/\xi_1(1,0)$ is a lattice of length at least two. So all subdirectly irreducible quotients of $B_1$ have length at least two. The same holds for~$B_2$. Thus neither $B_1$ nor $B_2$ have any quotient isomorphic to~2~$(**)$.

Assume that $B_3=T_2$. As $T_2$ is generated by $a_1,a_2,a_3,a_4$, and $a_6$, which are all doubly irreducible, the maximal sublattices of $T_2$ are $T_2-\set{a_k}$, for $k\in\set{1,2,3,4,6}$, all these lattices have a congruence lattice isomorphic to $2^2$. Thus the maximal sublattices of $T_2$ with respect to the property of having a congruence lattice isomorphic to $2^2$ are $T_2-\set{a_k}$, for $k\in\set{1,2,3,4,6}$. But $T_2-\set{a_k}$ has a quotient isomorphic to $2$, for all $k\in\set{1,2,3,4}$. So by $(*)$ and $(**)$, $B_1=B_2=T_2-\set{a_6}$, thus $2^4\cong\Con (B_1\cap B_2)\cong 2^2$. So $B_3\not=T_2$. Using a dual argument we get $B_3\not=T_3$.

Assume that $B_3=T_4$. The maximal sublattices of $T_4$ with respect to the property of having a congruence lattice isomorphic to $2^2$ are $T_4-\set{a_k}$, for all $k\in\set{1,2,5,6}$, the lattice $T_4-\set{a_4,t_1}$, and the lattice $T_4-\set{a_4,t_2}$. Moreover $T_4-\set{a_5}$, $T_4-\set{a_6}$, $T_4-\set{a_4,t_1}$ and $T_4-\set{a_4,t_2}$ all have a quotient isomorphic to $2$, thus, by $(*)$ and $(**)$ both $B_1$ and $B_2$ belong to $\set{T_4-\set{a_1},T_4-\set{a_2}}$. But $\Con(T_4-\set{a_1})\cong\Con(T_4-\set{a_2})\cong\Con(T_4-\set{a_1,a_2})\cong 2^2$, which leads to a contradiction. Thus $\vec{D}$ has no lifting in~$\cV_2$. Thus it follows from Corollary~\ref{C:lifting-inFGCDvariety-with-alephn} that $\crit{\cV_1}{\cV_2}\le\aleph_1$.

All morphisms in this proof preserve~$0$ and~$1$, so
 \[
 \crit{\cV_1^{0,1}}{\cV_2^{0,1}}=\crit{\cV_1^{0}}{\cV_2^{0}} \crit{\cV_1^{0,1}}{\cV_2}=\aleph_1,
 \]
where $\cV_1^{0,1}$ (resp., $\cV_2^{0,1}$) denotes the variety of bounded lattices generated by $T_1$ (resp., $T_2,T_3$ and $T_4$); and similarly for $\cV_1^0$, and so on.
\end{proof}

\section{Conclusion}
Many of the results in this paper can be formulated in purely categorical terms, thus considerably expanding their range of application, at the expense of a noticeably heavier preparatory work. Furthermore, for a given poset $I$, the existence of a norm-covering of $I$ with properties enabling such categorical extensions gives rise to interesting combinatorial problems. These developments will be presented in a further paper.

The cardinals $\aleph_0$, $\aleph_1$, and $\aleph_2$ are critical points of some pairs of varieties of lattices, but we do not know whether there exist two finitely generated varieties of lattices with critical point~$\aleph_3$.

\section{Acknowledgment}
I thank Friedrich Wehrung for his inspiring advisorship and careful reading of the paper, and also the anonymous referee for his thoughtful report, all of which lead to many improvements both in form and substance.


\begin{thebibliography}{99}


\bibitem{FuNa42}
N. Funayama and T. Nakayama,
\emph{On the distributivity of a lattice of lattice congruences},
Proc. Imp. Acad. Tokyo \textbf{18} (1942), 553--554.


\bibitem{UnivAlgebra}
G. Gr\"atzer,
``Universal algebra''.
D. Van Nostrand Co., Inc., Princeton, N.\,J.-Toronto, Ont.-London,
1968.
xvi+368 p.





\bibitem{Lamp82}
W.\,A. Lampe,
\emph{Congruence lattices of algebras of fixed similarity type.
II}, Pacific J. Math. \textbf{103} (1982), 475--508.

\bibitem{Lamp06}
W.\,A. Lampe,
\emph{Simultaneous congruence representations: a special case},
Algebra Universalis \textbf{54} (2005), 249--255

\bibitem{Ploscica00}
M. Plo\v s\v cica,
\emph{Separation properties in congruence lattices of lattices},
Colloq. Math. \textbf{83} (2000), 71--84.

\bibitem{Ploscica03}
M. Plo\v s\v cica,
\emph{Dual spaces of some congruence lattices},
Topology and its Applications \textbf{131} (2003), 1--14.

\bibitem{PTW}
M. Plo\v{s}\v{c}ica, J. T\r{u}ma, and F. Wehrung,
\emph{Congruence lattices of free lattices in nondistributive
varieties}, Colloq. Math. \textbf{76}, no.~2 (1998), 269--278.


\bibitem{CLPSurv}
J. T\r{u}ma and F. Wehrung,
\emph{A survey of recent results on congruence lattices of lattices},
Algebra Universalis \textbf{48}, no.~4 (2002), 439--471.

\bibitem{Bowtie}
J. T\r{u}ma and F. Wehrung,
\emph{Congruence lifting of diagrams of finite Boolean semilattices requires large congruence varieties}, Internat. J. Algebra Comput.~\textbf{16}, no.~3 (2006), 541--550. 

\bibitem{CLP}
F. Wehrung,
\emph{A solution to Dilworth's congruence lattice problem},
Adv. Math. \textbf{216}, no.~2 (2007), 610--625.

\end{thebibliography}
\end{document}